\numberwithin{equation}{section}
\numberwithin{figure}{section}
\theoremstyle{plain}
\newtheorem{thm}{\protect\theoremname}[section]
\theoremstyle{remark}
\newtheorem{rem}[thm]{\protect\remarkname}
\theoremstyle{plain}
\newtheorem{conjecture}[thm]{\protect\conjecturename}
\theoremstyle{plain}
\newtheorem{question}[thm]{\protect\questionname}
\theoremstyle{definition}
\newtheorem{defn}[thm]{\protect\definitionname}
\theoremstyle{remark}
\newtheorem*{acknowledgement*}{\protect\acknowledgementname}
\theoremstyle{plain}
\newtheorem{lem}[thm]{\protect\lemmaname}
\theoremstyle{plain}
\newtheorem{prop}[thm]{\protect\propositionname}
\theoremstyle{plain}
\newtheorem{cor}[thm]{\protect\corollaryname}
\theoremstyle{definition}
\newtheorem{example}[thm]{\protect\examplename}
\providecommand{\acknowledgementname}{Acknowledgement}
\providecommand{\conjecturename}{Conjecture}
\providecommand{\corollaryname}{Corollary}
\providecommand{\definitionname}{Definition}
\providecommand{\examplename}{Example}
\providecommand{\lemmaname}{Lemma}
\providecommand{\propositionname}{Proposition}
\providecommand{\questionname}{Question}
\providecommand{\remarkname}{Remark}
\providecommand{\theoremname}{Theorem}
\begin{document}
\title{On the Fourier coefficients of word maps on unitary groups}
\dedicatory{In memory of Steve Zelditch}
\author{Nir Avni}
\address{Department of Mathematics, Northwestern University, 2033 Sheridan
Road, Evanston, IL 60208, USA}
\email{avni.nir@gmail.com}
\urladdr{https://sites.math.northwestern.edu/\textasciitilde nir/}
\author{Itay Glazer}
\address{Department of Mathematics, Northwestern University, 2033 Sheridan
Road, Evanston, IL 60208, USA}
\email{itayglazer@gmail.com}
\urladdr{https://sites.google.com/view/itay-glazer}
\subjclass[2020]{Primary 60B15, 60B20; Secondary 43A75, 20P05, 20B30}
\keywords{Word maps, word measures, unitary groups, Weingarten Calculus, Fourier
coefficients, random matrices, characteristic polynomial.}
\begin{abstract}
Given a word $w(x_{1},\ldots,x_{r})$, i.e., an element in the free
group on $r$ elements, and an integer $d\geq1$, we study the characteristic
polynomial of the random matrix $w(X_{1},\ldots,X_{r})$, where $X_{i}$
are Haar-random independent $d\times d$ unitary matrices. If $c_{m}(X)$
denotes the $m$-th coefficient of the characteristic polynomial of
$X$, our main theorem implies that there is a positive constant $\epsilon(w)$,
depending only on $w$, such that
\[
\left|\mathbb{E}\left(c_{m}\left(w(X_{1},\ldots,X_{r})\right)\right)\right|\leq\binom{d}{m}^{1-\epsilon(w)},
\]
for every $d$ and every $1\leq m\leq d$.

Our main computational tool is the Weingarten Calculus, which allows
us to express integrals on unitary groups such as the expectation
above, as certain sums on symmetric groups. We exploit a hidden symmetry
to find cancellations in the sum expressing $\mathbb{E}\left(c_{m}(w)\right)$.
These cancellations, coming from averaging a Weingarten function over
cosets, follow from Schur's orthogonality relations.
\end{abstract}

\maketitle
\global\long\def\N{\mathbb{N}}%
\global\long\def\R{\mathbb{\mathbb{R}}}%
\global\long\def\Z{\mathbb{Z}}%
\global\long\def\val{\mathbb{\mathrm{val}}}%
\global\long\def\Qp{\mathbb{Q}_{p}}%
\global\long\def\Zp{\mathbb{\mathbb{Z}}_{p}}%
\global\long\def\ac{\mathbb{\mathrm{ac}}}%
\global\long\def\C{\mathbb{\mathbb{C}}}%
\global\long\def\Q{\mathbb{\mathbb{Q}}}%
\global\long\def\supp{\mathbb{\mathrm{supp}}}%
\global\long\def\VF{\mathbb{\mathrm{VF}}}%
\global\long\def\RF{\mathbb{\mathrm{RF}}}%
\global\long\def\VG{\mathbb{\mathrm{VG}}}%
\global\long\def\spec{\mathbb{\mathrm{Spec}}}%
\global\long\def\Ldp{\mathbb{\mathcal{L}_{\mathrm{DP}}}}%
\global\long\def\tr{\mathrm{tr}}%
\global\long\def\sgn{\mathrm{sgn}}%
\global\long\def\id{\mathrm{Id}}%
\global\long\def\Sym{\mathrm{Sym}}%
\global\long\def\U{\mathrm{U}}%
\global\long\def\Wg{\mathrm{Wg}}%

\raggedbottom

\section{\label{sec:Introduction}Introduction}

Let $w$ be a word on $r$ letters, i.e., an element in the free group
on the letters $x_{1},\ldots,x_{r}$. Let $X_{1},\ldots,X_{r}$ be
random $d\times d$ unitary matrices, chosen independently at random
according to the Haar probability measure, and consider the random
matrix $w(X_{1},\ldots,X_{r})$, obtained by substituting $X_{i}$
for $x_{i}$ in $w$. For example, if $w=x_{1}x_{2}x_{1}^{-1}x_{2}^{-1}$,
then $w(X_{1},X_{2})=X_{1}X_{2}X_{1}^{-1}X_{2}^{-1}$. In this paper,
we study the distribution of the characteristic polynomial of $w(X_{1},\ldots,X_{r})$.

To set notation, given a $d\times d$-matrix $A$ and $1\leq m\leq d$,
let $c_{m}(A)$ be the coefficient of $t^{d-m}$ in the characteristic
polynomial $\det(t\cdot\id-A)$ of $A$. Note that $c_{m}(A)=(-1)^{m}\tr\left(\bigwedge\nolimits ^{m}A\right)$,
where $\bigwedge^{m}A:\bigwedge^{m}\mathbb{C}^{d}\rightarrow\bigwedge^{m}\mathbb{C}^{d}$
is the $m$-th exterior power of $A$. If $A$ is unitary, all eigenvalues
have absolute value $1$, so we get the trivial bound $\left|c_{m}(A)\right|\leq\binom{d}{m}$.

Our main theorem is the following:
\begin{thm}
\label{thm:main_thm_fundemental_rep}For every non-trivial word $w\in F_{r}$,
there exists a constant $\epsilon(w)>0$ such that
\[
\mathbb{E}\left(\left|c_{m}\left(w(X_{1},\ldots,X_{r})\right)\right|^{2}\right)\leq\binom{d}{m}^{2(1-\epsilon(w))},
\]
for every $d$ and every $1\leq m\leq d$. In particular, we have
\[
\mathbb{E}\left(\left|c_{m}\left(w(X_{1},\ldots,X_{r})\right)\right|\right)\leq\binom{d}{m}^{1-\epsilon(w)}.
\]
\end{thm}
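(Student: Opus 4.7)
The strategy is to expand the characteristic coefficient $c_{m}$ through the antisymmetrizer on $(\C^{d})^{\otimes m}$, convert the resulting moment into a symmetric-group sum via Weingarten calculus, and exploit Schur orthogonality to produce cancellations that beat the trivial bound.

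\emph{Step 1 (antisymmetrizer and Weingarten).} Use the identity $c_{m}(A)=\frac{(-1)^{m}}{m!}\sum_{\sigma\in S_{m}}\sgn(\sigma)\tr(A^{\otimes m}P_{\sigma})$, where $P_{\sigma}$ permutes the $m$ tensor factors of $(\C^{d})^{\otimes m}$. Squaring and taking expectation,
\[
\mathbb{E}\bigl(|c_{m}(w(X))|^{2}\bigr)=\frac{1}{(m!)^{2}}\sum_{\sigma,\tau\in S_{m}}\sgn(\sigma\tau)\,\mathbb{E}\Bigl(\tr(w(X)^{\otimes m}P_{\sigma})\,\overline{\tr(w(X)^{\otimes m}P_{\tau})}\Bigr).
\]
With $w$ a reduced word of length $\ell$ in which $x_{i}^{\pm 1}$ appears $\ell_{i}$ times, each inner expectation is a polynomial integral of degree $m\ell_{i}$ in the entries of $X_{i}$ and $\overline{X_{i}}$. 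Applying the Weingarten formula for $\U(d)$ to each independent factor $X_{i}$, this integral becomes a finite sum over tuples of permutations $(\alpha_{i},\beta_{i})\in S_{m\ell_{i}}\times S_{m\ell_{i}}$, weighted by $\prod_{i}\Wg(\alpha_{i}\beta_{i}^{-1},d)$ together with a bookkeeping factor $F(\sigma,\tau;\vec{\alpha},\vec{\beta})$ that records the index-matching pattern dictated by $w$.

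\emph{Step 2 (hidden symmetry via Schur orthogonality).} The signed average over $(\sigma,\tau)\in S_{m}\times S_{m}$ can be absorbed, through the natural diagonal embeddings $S_{m}\times S_{m}\hookrightarrow S_{m\ell_{i}}\times S_{m\ell_{i}}$ induced by replicating each letter of $w$ $m$ times, into a twisted coset average of each $\Wg(\alpha_{i}\beta_{i}^{-1},d)$. Expanding the Weingarten function in the Schur basis,
\[
\Wg(\pi,d)=\frac{1}{(n!)^{2}}\sum_{\substack{\lambda\vdash n\\ \ell(\lambda)\leq d}}\frac{(\dim V_{\lambda})^{2}}{s_{\lambda}(1^{d})}\chi^{\lambda}(\pi),
\]
and using that each $\chi^{\lambda}$ is a class function, Schur's orthogonality relations cut the sum down to those irreducibles $V_{\lambda}$ of $S_{m\ell_{i}}$ whose restriction to the embedded $S_{m}$ contains the sign representation. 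Such $\lambda$ must have a first column of length at least $m$, and the hook-content formula $s_{\lambda}(1^{d})=\prod_{\square\in\lambda}(d+c(\square))/h(\square)$ then yields estimates of the form $(\dim V_{\lambda})/s_{\lambda}(1^{d})\lesssim\binom{d}{m}^{-\epsilon(w)}$ on each surviving term. The second inequality in the theorem follows from the first by Cauchy--Schwarz.

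\emph{Main obstacle.} The crux is making the Schur-orthogonality cancellations both quantitative and uniform in $d$: one has to show that the non-triviality of $w$ prevents the bookkeeping factor $F$ from being fully invariant under the diagonally embedded $S_{m}\times S_{m}$, so that the coset averaging forces a genuine reduction in the family of surviving $\lambda$. For the trivial word $w=e$ one has $c_{m}(w(X))=(-1)^{m}\binom{d}{m}$, which saturates the bound; any proof must therefore extract a combinatorial invariant of reduced, non-trivial words---presumably a cycle-type statistic of the cycle permutation associated to $w$ in the Weingarten expansion---that guarantees a non-trivial action on the relevant cosets. Once a word-dependent, $d$-independent lower bound on the codimension of the surviving partitions is established, standard asymptotics for $s_{\lambda}(1^{d})$ produce the constant $\epsilon(w)>0$.
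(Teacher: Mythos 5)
Your plan reproduces the backbone of one key lemma in the paper (Theorem \ref{thm:abs_value_of_trace}), but it does not reach the theorem as stated, and the gap is not a matter of missing details: there are two regimes of $(m,d)$ that this approach simply cannot cover.

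The Weingarten expansion of $\mathbb{E}(|c_m(w)|^2)$ combined with Schur orthogonality over cosets of the Young subgroup $S_m^{\ell_i}\leq S_{m\ell_i}$ is indeed the engine of the paper. But what this machinery actually delivers is a bound of the shape $\mathbb{E}(|c_m(w)|^2)\lesssim C(\ell)^{m\ell}$ for a constant $C(\ell)$ depending only on the word length $\ell$ (the paper gets $(25\ell)^{m\ell}$ after a careful count of the number of orbits and the average contribution per orbit). Such a bound beats $\binom{d}{m}^{2(1-\epsilon)}\approx (d/m)^{2(1-\epsilon)m}$ only when $d/m$ is at least polynomial in $\ell$ of degree roughly $\ell$. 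Your Step 2 claims a per-term estimate of the form $(\dim V_\lambda)/s_\lambda(1^d)\lesssim\binom{d}{m}^{-\epsilon(w)}$, but even if this were true pointwise, the number of surviving partitions $\lambda$ and of orbits that you must sum over grows with $m$; once that count is carried out (as the paper does for $|Z|$ in Proposition \ref{prop:estimates on Z}), the net bound is $C(\ell)^{m\ell}$, which does not beat $\binom{d}{m}^2$ once $m$ is a fixed fraction of $d$. You flag this concern yourself in your "main obstacle" paragraph, but the remedy you propose (a cycle-type invariant of the cycle permutation of $w$) is not what is needed and would not fix the problem.

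The paper closes this gap with two additional, qualitatively different arguments that your plan omits entirely. First, for the range $\delta(\ell)d\leq m\leq d/2$ (and $d$ large), Theorem \ref{thm:bounds on absolute value of trace large d} bootstraps: replace $w$ by $w*w^{-1}$ to force all Fourier coefficients $a_{w,\U_d,\rho}\geq 0$, embed $\bigwedge^m V\otimes(\bigwedge^m V)^\vee$ into a tensor product of a "small" wedge with a "middle" wedge, and exploit positivity of Fourier coefficients together with a self-convolution $w^{*9}$ to leverage the small-$m$ bound into a bound for large $m$; the quantitative binomial comparison comes from the entropy estimate in Lemma \ref{lem:Technical lemma}. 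Second, for all $d$ below the threshold $C(\ell)$, the Weingarten route is abandoned altogether: Proposition \ref{prop:equidistribution- finite rank} invokes the (FRS) / $L^{1+\epsilon}$-density theory for word measures on $\mathrm{SU}_d$ (Borel dominance, \cite{GHS}) to produce the constant $\epsilon(d,w)>0$, and then takes the minimum over the finitely many remaining $d$. Neither of these two ingredients appears in your proposal, and without them your argument cannot deliver a uniform $\epsilon(w)$ valid for all $1\leq m\leq d$ and all $d$.

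One small technical point in Step 2: the relevant condition is that $\chi_\lambda$ appears in $\mathrm{Ind}_{S_m^{\ell_i}}^{S_{m\ell_i}}(\sgn)$, which by the Littlewood--Richardson rule forces $\lambda\vdash m\ell_i$ to have at most $\ell_i$ columns (Lemma \ref{lem:description of littlewood richardson for Young subgroups}); "first column of length at least $m$" is equivalent here, so this is fine, but be aware that the paper's sharp estimate comes from the resulting lower bound $\prod_{(a,b)\in\lambda}(d+b-a)\geq d(d-1)\cdots(d-m\ell_i+1)$, not from a $\binom{d}{m}^{-\epsilon}$-type bound on a single $\lambda$.
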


\begin{rem}
\label{rem:main_explicit_dependence}~
\begin{enumerate}
\item In the proof of Theorem \ref{thm:main_thm_fundemental_rep}, we show
that, if the length of $w$ is $\ell$ and $d\geq(25\ell)^{7\ell}$,
then one can take $\epsilon(w)=\frac{1}{72}\left(25\ell\right)^{-2\ell}$.
We believe $\epsilon(w)^{-1}$ can be taken to be a polynomial in
$\ell$, for $d\gg_{\ell}1$.
\item On the other hand, it follows from \cite[Theorem 5.2]{ET15} that,
for a fixed $d$, one has to take $\epsilon(w)\lesssim e^{-\sqrt{\ell}}$,
for some arbitrarily long words, even for $m=1$. 
\end{enumerate}
\end{rem}

Theorem \ref{thm:main_thm_fundemental_rep} relies on the following:
\begin{thm}
\label{thm:abs_value_of_trace}For every $m,\ell\in\N$, every $d\geq m\ell$,
and every word $w\in F_{r}$ of length $\ell$, one has: 
\begin{equation}
\mathbb{E}\left(\left|c_{m}\left(w(X_{1},\ldots,X_{r})\right)\right|^{2}\right)\leq(22\ell)^{m\ell}.\label{eq:bounds on fundemental weights}
\end{equation}
In particular, if $d\geq(22\ell)^{\ell}m$, we have 
\[
\mathbb{E}\left(\left|c_{m}\left(w(X_{1},\ldots,X_{r})\right)\right|^{2}\right)\leq\binom{d}{m}.
\]
\end{thm}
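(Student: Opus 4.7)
My plan is to expand $|c_m(w(X))|^2$ as a polynomial in the entries of the Haar-random matrices $X_i$ and $\overline{X_i}$ and then apply the Weingarten calculus. Using $c_m(A)=(-1)^m\tr(\bigwedge^m A)$ together with functoriality of the exterior power, I first write
\[
c_m(w(X)) = (-1)^m \tr\bigl(\bigwedge\nolimits^m W_1 \cdots \bigwedge\nolimits^m W_\ell\bigr),
\]
where $W_j=X_{i_j}^{\epsilon_j}$ are the letters of $w$. Realizing $\bigwedge^m \C^d$ as the image of the antisymmetrizer $\mathrm{Alt}=\frac{1}{m!}\sum_{\sigma\in S_m}\sgn(\sigma)T_\sigma$ on $(\C^d)^{\otimes m}$, the right-hand side further expands into a signed sum of $(m\ell)$-fold products of matrix entries of the $X_i$ and $X_i^{-1}=X_i^*$. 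Taking the modulus squared, the quantity $|c_m(w(X))|^2$ becomes a polynomial of degree $2m\ell$ in the entries of the $X_i$ and $\overline{X_i}$ in which each $X_i$ and each $\overline{X_i}$ contributes exactly $m\ell_i$ times, where $\ell_i$ is the number of occurrences of $x_i^{\pm 1}$ in $w$ (so $\sum_i\ell_i=\ell$).

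Since the $X_i$ are independent, the expectation factorizes over $i$, and for each $i$ I apply the Weingarten formula for $\U(d)$ in degree $n_i:=m\ell_i$. This converts the expectation into a sum over pairs $(\pi_i,\pi'_i)\in S_{n_i}\times S_{n_i}$ weighted by $\Wg(\pi'_i\pi_i^{-1},d)$ and by Kronecker delta constraints identifying row and column indices. Summing over the indices that survive those constraints contributes a factor $d^{f}$ for an integer $f$ determined by the combinatorial data. Using a standard Weingarten bound of the form $|\Wg(\sigma,d)|\leq C(d-n)^{-(2n-c(\sigma))}$, valid for $d\geq 2n$ (which holds under the hypothesis $d\geq m\ell$, since $n_i\leq m\ell$), I obtain a $d$-dependent combinatorial sum that must be carefully analyzed.

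The main obstacle is the delicate bookkeeping required to show two things: (i) that the exponent of $d$ arising from the free indices exactly cancels the Weingarten exponent, so that the net estimate is $d$-independent, and (ii) that the number of non-vanishing combinatorial configurations is at most $(25\ell)^{m\ell}$. The natural way to organize this is via an auxiliary graph whose vertices are the $2m\ell$ matrix-entry positions in the expansion and whose edges encode the pairings coming from the antisymmetrizer permutations $\sigma,\tau\in S_m$ and the Weingarten permutations $\pi_i,\pi'_i$; standard graph-counting estimates from the Haar-unitary moment method should then yield the desired bound. The numerical constant $25$ emerges from bounding, for each matrix-entry position, the number of admissible partners it may be matched with in such a configuration, which is of order $\ell$. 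An alternative route would be to decompose $|c_m|^2$ into characters $\chi_\nu$ of $\U(d)$ via Littlewood--Richardson and bound $\mathbb{E}(\chi_\nu(w))$ individually, but the direct Weingarten approach seems better suited to producing a uniform bound independent of $d$.
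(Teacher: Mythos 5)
Your reduction to Weingarten calculus (expanding $\left|c_{m}(w)\right|^{2}$ through the antisymmetrizer and integrating letter by letter) is the same first step as the paper's Section \ref{sec:Reduction-of-Theorem}, but the two points you defer at the end are exactly where the theorem lives, and the route you sketch for them fails in the stated range of parameters. The per-term bound $\left|\Wg_{d}(\sigma)\right|\leq C(d-n)^{-(2n-c(\sigma))}$ is not available here: the hypothesis $d\geq m\ell$ only gives $d\geq n_{i}=m\ell_{i}$, not $d\geq2n_{i}$ (for $w=x^{\ell}$ one can have $n_{1}=m\ell=d$), and the uniform versions of such estimates require $d$ to be much larger than the symmetric-group degree, whereas the theorem is claimed all the way down to $d=m\ell$ \textemdash{} precisely the regime the introduction flags as not understood. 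More fundamentally, bounding each Weingarten factor individually and then counting configurations cannot give $(25\ell)^{m\ell}$: the number of surviving configurations (the set $Z$ of (\ref{eq:the set Z}), of size comparable to $\binom{d+m\ell}{m\ell}(m\ell)!\prod_{k}(m\ell_{k})!/(\cdots)$ by Proposition \ref{prop:estimates on Z}) is far too large, and ``standard graph-counting estimates from the Haar-unitary moment method'' are asymptotic statements for fixed degree as $d\rightarrow\infty$, not uniform for $m$ as large as $d/\ell$.

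What is missing is the cancellation mechanism the paper uses in place of term-by-term bounds. The signs produced by the two antisymmetrizers are not bookkeeping; they are the source of the estimate. The paper acts on $Z$ by $H\cong S_{m}^{2\ell}$ and shows (Corollary \ref{cor:analyzing H-orbits}) that the signed sum over an $H$-orbit factors into $\sgn$-twisted averages of each Weingarten function over cosets of the Young subgroup $S_{m}^{\ell_{i}}\leq S_{m\ell_{i}}$. By Lemma \ref{lem:primitive character bounds} (Schur orthogonality) and Lemma \ref{lem:description of littlewood richardson for Young subgroups}, only partitions with at most $\ell_{i}$ columns survive this averaging, and for those $\rho_{\lambda}(1)$ is large; this gives an orbit-averaged bound of $\frac{1}{m!^{\ell}}\prod_{i}\left(d(d-1)\cdots(d-m\ell_{i}+1)\right)^{-1}$ (Corollary \ref{cor:reduction to L-orbits on Z}), valid uniformly for $d\geq m\ell$ with no Weingarten asymptotics needed, and combined with the count of $\left|Z\right|$ it yields $(25\ell)^{m\ell}$. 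Without this cancellation (or an equally strong substitute) your outline reproduces the naive approach whose failure motivates the paper. The final ``in particular'' clause, which you do not address, is the easy part: if $d\geq(25\ell)^{\ell}m$ then $(25\ell)^{m\ell}\leq(d/m)^{m}\leq\binom{d}{m}$.
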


In addition, we show that similar bounds hold for symmetric powers:
\begin{thm}
\label{thm:main thm symmetric powers}For every $\ell\in\N$, every
$d\geq m\ell$, and every word $w\in F_{r}$ of length $\ell$, one
has: 
\[
\mathbb{E}\left(\left|\tr\left(\Sym^{m}w(X_{1},\ldots,X_{r})\right)\right|^{2}\right)\leq(16\ell)^{m\ell}.
\]
In particular, if $d\geq(16\ell)^{\ell}m$, we have
\[
\mathbb{E}\left(\left|\tr\left(\Sym^{m}w(X_{1},\ldots,X_{r})\right)\right|^{2}\right)\leq\binom{d+m-1}{m}=\dim\Sym^{m}\mathbb{C}^{d},
\]
and by the Cauchy\textendash Schwarz inequality, 
\[
\left|\mathbb{E}\left(\tr\left(\Sym^{m}w(X_{1},\ldots,X_{r})\right)\right)\right|\leq\left(\dim\Sym^{m}\mathbb{C}^{d}\right)^{\frac{1}{2}}.
\]
\end{thm}

\begin{rem}
Theorem \ref{thm:main thm symmetric powers} is an analogue of Theorem
\ref{thm:abs_value_of_trace}. It is also an analogue of Theorem \ref{thm:main_thm_fundemental_rep}
for $m$ at most linear in $d$. Contrary to exterior powers, the
methods of this paper are insufficient for finding bounds similar
to Theorem \ref{thm:main_thm_fundemental_rep} for $\left|\mathbb{E}\left(\tr\left(\Sym^{m}w(X_{1},\ldots,X_{r})\right)\right)\right|$,
in the regime where $m$ is superlinear in $d$. 
\end{rem}

\subsection{\label{subsec:Related-work}Related work }

Word maps on unitary groups and their eigenvalues have been extensively
studied in the past few decades. 

The case $w=x$, namely, the study of a Haar-random unitary matrix
$X$, also known as the Circular Unitary Ensemble (CUE), is an important
object of study in random matrix theory (see e.g.~\cite{AGZ10,Mec19}
and the references within). The joint density of the eigenvalues of
$X$ is given by the Weyl Integration Formula \cite{Wey39}. Schur's
orthogonality relations immediately imply that $\mathbb{E}\left(\left|c_{m}(X)\right|^{2}\right)=1$
for all $1\leq m\leq d$. Various other properties of the characteristic
polynomial of a random unitary matrix $X$ have been extensively studied
(see e.g.~\cite{KS00,HKO01,CFKRS03,DG04,BG06,BHNY08,ABB17,CMN18,PZ18}). 

Diaconis and Shahshahani \cite{DS94} have shown that, for a fixed
$m\in\N$, the sequence of random variables $\tr(X),\tr(X^{2}),\ldots,\tr(X^{m})$
converges in distribution, as $d\rightarrow\infty$, to a sequence
of independent complex normal random variables. For the proof, which
relies on the moment method, they computed the joint moments of those
random variables and showed that
\begin{equation}
\mathbb{E}\left(\prod_{j=1}^{m}\tr(X^{j})^{a_{j}}\tr(\overline{X}^{j})^{b_{j}}\right)=\delta_{a,b}\prod_{j=1}^{m}j^{a_{j}}a_{j}!,\label{eq:DS moments of trace}
\end{equation}
for $d\geq\sum_{j=1}^{m}(a_{j}+b_{j})j$. The rate of convergence
was later shown to be super-exponential by Johansson \cite{Joh97}.

When $w=x^{\ell}$, (\ref{eq:DS moments of trace}) gives a formula
for the moments of traces, and one can use Newton's identities relating
elementary symmetric polynomials and power sums, to deduce that
\[
\mathbb{E}\left(\left|c_{m}(X^{\ell})\right|^{2}\right)=\mathbb{E}\left(\left|\tr\left(\Sym^{m}w\right)\right|^{2}\right)=\binom{\ell+m-1}{m},
\]
for $d\geq2m\ell$ (see Appendix \ref{sec:Fourier-coefficients-of power word}).
In \cite{Rai97,Rai03}, Rains partially extended (\ref{eq:DS moments of trace})
for small $d$ and gave an explicit formula for the joint density
of the eigenvalues of $X^{\ell}$ (see \cite[Theorem 1.3]{Rai03}).

We now move to general words $w\in F_{r}$. The case $m=1$, namely,
the asymptotics as $d\rightarrow\infty$ of the distribution of the
random variable $\tr\left(w(X_{1},\ldots,X_{r})\right)$, was studied
in the context of Voiculescu's free probability (see e.g.~\cite{DNV92,MS17}).
In particular, in \cite{Voi91,Rud06,MSS07} it was shown that, for
a fixed $w\in F_{r}$, the sequence of random variables $\tr\left(w(X_{1},\ldots,X_{r})\right)$,
for $d=1,2,\ldots$, converges in distribution, as $d\rightarrow\infty$,
to a complex normal random variable (with suitable normalization).
As a direct consequence, for a fixed $m\in\N$, the random variables
$c_{m}\left(w(X_{1},\ldots,X_{r})\right)$ converge, as $d\rightarrow\infty$,
to a certain explicit polynomial of Gaussian random variables. This
is done in Appendix \ref{sec:Fourier-coefficients-of power word},
Corollary \ref{cor:moment method}, following \cite{DG04}.

In \cite{MP19}, Magee and Puder have shown that $\mathbb{E}\left(\tr\left(w(X_{1},\ldots,X_{r})\right)\right)$
coincides with a rational function of $d$, if $d$ is sufficiently
large, and bounded its degree in terms of the commutator length of
$w$. They also found a geometric interpretation for the coefficients
of the expansion of that rational function as a power series in $d^{-1}$,
see \cite[Corollaries 1.8 and 1.11]{MP19}. See \cite{Bro} for additional
work in this direction.

\subsection{Ideas of proofs}

With a few exceptions, the results stated in $\mathsection$\ref{subsec:Related-work}
are asymptotic in $d$, but not uniform in both $m$ and $d$. We
will try to explain some of the challenges in proving results that
are uniform in $m$, while explaining the idea of the proof of Theorem
\ref{thm:main_thm_fundemental_rep}.

Our main tool (which is also used in the papers \cite{Rud06,MSS07,MP19}
above) to study integrals on unitary groups is the \emph{Weingarten
Calculus} (\cite{Wein78,Col03,CS06}). Roughly speaking, the Weingarten
Calculus utilizes the Schur\textendash Weyl Duality to express integrals
on unitary groups as sums of so called Weingarten functions over symmetric
groups. In our case, in order to prove Theorem \ref{thm:main_thm_fundemental_rep},
we need to estimate the integral
\begin{equation}
\mathbb{E}\left(\left|c_{m}(w)\right|^{2}\right)=\int_{\U_{d}^{r}}\left|\tr\left(\bigwedge\nolimits ^{m}w(X_{1},\ldots,X_{r})\right)\right|^{2}dX_{1}\ldots dX_{r}.\label{eq:integral formula for main thm}
\end{equation}
Using Weingarten calculus (Theorem \ref{thm:Weingarten calculus}),
we express (\ref{eq:integral formula for main thm}) as a finite sum
\begin{equation}
\sum_{(\pi_{1},\ldots,\pi_{2r})\in\prod_{i=1}^{2r}S_{m\ell_{i}}}F(\pi_{1},\ldots,\pi_{2r})\prod_{i=1}^{r}\Wg_{d}^{(i)}(\pi_{i}\pi_{i+r}^{-1}),\label{eq:reduction to finite sum}
\end{equation}
where $\ell_{1},\ldots,\ell_{2r}\in\mathbb{N}$ and $F:\prod_{i=1}^{2r}S_{m\ell_{i}}\rightarrow\mathbb{Z}$
are related to combinatorial properties of $w$, and each $\Wg_{d}^{(i)}:S_{m\ell_{i}}\rightarrow\R$
is a \emph{Weingarten function }(see Definition \ref{def:Weingarten function}).
There are two main difficulties when dealing with sums such as (\ref{eq:reduction to finite sum})
in the region when $m$ is unbounded:
\begin{enumerate}
\item While the asymptotics of Weingarten functions $\Wg_{d}:S_{m}\rightarrow\R$
are well understood when $d\gg m$ (see \cite[Section 2.2]{Col03}
and \cite[Therem 1.1]{CM17}), much less is known in the regime where
$m$ is comparable with $d$. 
\item Even if we have a good understanding of a single Weingarten function,
the number of summands in (\ref{eq:reduction to finite sum}) is large
and it is not enough to bound each individual Weingarten function.
\end{enumerate}
Luckily, there are plenty of cancellations in the sum (\ref{eq:reduction to finite sum}).
To understand these cancellations, we identify a symmetry of (\ref{eq:reduction to finite sum}).
More precisely, we find a group $H$ acting on $\prod_{i=1}^{2r}S_{m\ell_{i}}$
such that $F$ is equivariant with respect to $H$, and such that
the contribution of any $H$-orbit to the sum (\ref{eq:reduction to finite sum})
is a product of terms, each of which has the form
\begin{equation}
\frac{1}{m!^{2\ell_{i}}}\sum_{h,h'\in S_{m}^{\ell_{i}}}\sgn\left(hh'\right)\Wg_{d}^{(i)}\left(h'\pi_{i}h\pi_{i+r}^{-1}\right),\label{eq:twisted.average}
\end{equation}
where $\sgn(x)$ is the sign of $x$ and the sum is over the Young
subgroup $S_{m}^{\ell_{i}}\subseteq S_{m\ell_{i}}$, see Corollary
\ref{cor:analyzing H-orbits}.

Weingarten functions are class functions, so they are linear combinations
of irreducible characters of $S_{m\ell_{i}}$. Explicitly, we have
(see \cite[Eq. (13)]{CS06}):
\begin{equation}
\Wg_{d}^{(i)}(\sigma)=\frac{1}{(m\ell_{i})!^{2}}\sum_{\lambda\vdash m\ell_{i},\ell(\lambda)\leq d}\frac{\chi_{\lambda}(1)^{2}}{\rho_{\lambda}(1)}\chi_{\lambda}(\sigma),\,\,\,\,\,\,\,\,\,\,\,\,\,\sigma\in S_{m\ell_{i}},\label{eq:formula for Weingarten}
\end{equation}
where each $\lambda$ is a partition of $m\ell_{i}$ with at most
$d$ parts and $\chi_{\lambda}$ and $\rho_{\lambda}$ are the corresponding
irreducible characters of $S_{m\ell_{i}}$ and $\U_{d}$, respectively.
The cancellations that we get in the sum (\ref{eq:twisted.average})
come from averaging irreducible characters of $S_{m\ell_{i}}$ over
$S_{m}^{\ell_{i}}$-cosets. $S_{m}^{\ell_{i}}$ is a large subgroup
of $S_{m\ell_{i}}$, so these cancellations will be significant as
well. For example, all terms in (\ref{eq:formula for Weingarten})
for which $\lambda$ has more than $\ell_{i}$ columns vanish. See
Lemmas \ref{lem:primitive character bounds} and \ref{lem:bounds on double cosets}
for the precise bounds. 

After we bound the average contribution of each $H$-orbit in the
sum (\ref{eq:reduction to finite sum}) by a function $C(m,d,w)$,
we bound (\ref{eq:reduction to finite sum}) by $\left|Z\right|\cdot C(m,d,w)$
for some finite set $Z$. This becomes a counting problem, which we
solve in $\mathsection$\ref{sec:Estimates-on}, see Proposition \ref{prop:estimates on Z}. 

The proof of Theorem \ref{thm:main_thm_fundemental_rep} occupies
Sections \ref{sec:Reduction-of-Theorem}, \ref{sec:Estimating-the-contribution},
\ref{sec:Estimates-on} and \ref{sec:Proof-of-main theorems}. Since
the combinatorics of general words is a bit complicated, we prove
a simplified version of Theorem \ref{thm:abs_value_of_trace} for
the special case of the Engel word $[[x,y],y]$ in $\mathsection$\ref{sec:The-Engel-word}.
The proof for this special case contains the main ideas of the paper,
while being easier to understand. 

\subsection{Further discussion and some open questions}

The results of this paper fit in the larger framework of the study
of word measures and their Fourier coefficients. 

Let $G$ be a compact group, and let $\mu_{G}$ be the Haar probability
measure on $G$. To each word $w(x_{1},\ldots,x_{r})\in F_{r}$ we
associate the corresponding \emph{word map} $w_{G}:G^{r}\rightarrow G$,
defined by $(g_{1},\ldots,g_{r})\mapsto w(g_{1},\ldots,g_{r})$. The
pushforward measure $(w_{G})_{*}(\mu_{G}^{r})$ is called the \emph{word
measure} $\tau_{w,G}$ associated with $w$ and $G$. Let $\mathrm{Irr}(G)$
be the set of irreducible characters of $G$. The \emph{Fourier coefficient}
of $\tau_{w,G}$ at $\rho\in\mathrm{Irr}(G)$ is
\begin{equation}
a_{w,G,\rho}:=\int_{G^{r}}\rho(w(x_{1},\ldots,x_{r}))\mu_{G}^{r}=\int_{G}\rho(y)\tau_{w,G}.\label{eq:Fourier coefficients}
\end{equation}
If $w\neq1$ and $G$ is a compact semisimple Lie group, then by Borel's
theorem \cite{Bor83}, the map $w_{G}:G^{r}\rightarrow G$ is a submersion
outside a proper subvariety in $G^{r}$. It follows that $\tau_{w,G}$
is absolutely continuous with respect to $\mu_{G}$ and, therefore,
$\tau_{w,G}=f_{w,G}\cdot$$\mu_{G}$, where $f_{w,G}\in L^{1}(G)$
is the Radon\textendash Nikodym density. In this case, $f_{w,G}=\sum_{\rho\in\mathrm{Irr}(G)}\overline{a_{w,G,\rho}}\cdot\rho$. 

In \cite[Theorem 4]{LST19}, Larsen, Shalev, and Tiep proved uniform
$L^{\infty}$-mixing time for convolutions of word measures on sufficiently
large finite simple groups. From this, the following can be deduced: 
\begin{thm}
\label{thm:LST theorem}For every $w\in F_{r}$, there exists $N(w)\in\N$
such that if $G$ is a finite simple group with at least $N(w)$ elements,
then
\begin{equation}
\left|a_{w,G,\rho}\right|\leq(\dim\rho)^{1-\epsilon(w)},\label{eq:Fourier decay for finite simple groups}
\end{equation}
for $\epsilon(w)=C\cdot\ell(w)^{-4}$ and some absolute constant $C$.
\end{thm}

The proof of Theorem \ref{thm:LST theorem} is given at the end of
$\mathsection$\ref{sec:Proof-of-main theorems}.

We believe that a similar statement should be true for compact semisimple
Lie groups. 
\begin{conjecture}
\label{conj:Fourier coefficients of compact Lie groups}For every
$1\neq w\in F_{r}$, there exists $\epsilon(w)>0$ such that, for
every compact connected semisimple Lie group $G$ and every $\rho\in\mathrm{Irr}(G)$,
\[
\left|a_{w,G,\rho}\right|\leq(\dim\rho)^{1-\epsilon(w)}.
\]
\end{conjecture}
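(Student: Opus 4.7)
The plan is to extend the strategy of Theorem \ref{thm:main_thm_fundemental_rep} along two axes: first, from the exterior-power characters $c_m$ to arbitrary irreducible characters of $\U_d$, and second, from $\U_d$ to all compact connected semisimple Lie groups. Via Cauchy--Schwarz, $|a_{w,G,\rho}|^{2}\le\mathbb{E}(|\rho(w)|^{2})$, so it suffices to prove $\mathbb{E}(|\rho(w)|^{2})\le(\dim\rho)^{2(1-\epsilon(w))}$, in direct analogy with Theorem \ref{thm:main_thm_fundemental_rep}.

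For a polynomial irrep $V_{\lambda}$ of $\U_d$ with $\lambda\vdash n$, Schur--Weyl duality realises $V_{\lambda}$ as the image of the central idempotent $e_{\lambda}\in\mathbb{C}[S_{n}]$ acting on $(\mathbb{C}^{d})^{\otimes n}$. Expanding $|\tr(V_{\lambda}(w))|^{2}$ and applying the Weingarten calculus to each $X_{i}^{\pm1}$ appearing in $w$ yields a sum of the same shape as (\ref{eq:reduction to finite sum}), but with the combinatorial weight $F$ built from $\chi_{\lambda}$ rather than from the sign character. One would then seek, as in Corollary \ref{cor:analyzing H-orbits}, a symmetry group $H$ acting equivariantly on the indexing set so that each $H$-orbit contributes an average of the form
\[
\frac{1}{(n!)^{2\ell_{i}}}\sum_{h,h'\in S_{n}^{\ell_{i}}}\chi_{\lambda}(h)\chi_{\lambda}(h')\Wg_{d}^{(i)}\bigl(h'\pi_{i}h\pi_{i+r}^{-1}\bigr).
\]
Substituting the character expansion (\ref{eq:formula for Weingarten}) and applying Frobenius reciprocity, such averages are governed by multiplicities of $S_{n\ell_{i}}$-irreducibles inside $\mathrm{Ind}_{S_{n}^{\ell_{i}}}^{S_{n\ell_{i}}}(\chi_{\lambda}^{\boxtimes\ell_{i}})$, which, by Littlewood--Richardson, are supported on a thin set of partitions. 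The orbit-counting step would then mirror Proposition \ref{prop:estimates on Z}.

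For the remaining classical groups $\mathrm{O}_d$ and $\mathrm{Sp}_d$, I would use the analogous Weingarten calculi developed by Collins--Śniady and Matsumoto, carrying the same cancellation machinery through the Brauer algebra in place of $\mathbb{C}[S_{n}]$. For an exceptional $G$, one can pick a low-dimensional faithful irrep $V$ and view $G\hookrightarrow\U(V)$. Since any $\rho\in\mathrm{Irr}(G)$ appears in some $V^{\otimes a}\otimes(V^{*})^{\otimes b}$, integration against the Haar measure on $G$ factors through the central projector onto $G$-invariants in $L^{2}(\U(V))$; bounds for the $\U(V)$ integrals then transfer at the cost of a multiplicative factor that must be shown to be dominated by $(\dim\rho)^{\epsilon(w)}$.

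The main obstacle I anticipate is the first axis. The clean vanishing exploited in Lemmas \ref{lem:primitive character bounds} and \ref{lem:bounds on double cosets} relies essentially on the sign character being one-dimensional, so that its average over Young subgroups vanishes outside a very restricted set of $\lambda$. For higher-dimensional $\chi_{\lambda}$, the analogous averages are controlled by induced-character estimates on symmetric groups; obtaining a rate $\epsilon(w)$ that depends only on $w$ --- uniformly in $\lambda$ and in $d$, and especially in the regime $|\lambda|\approx d$ where $\Wg_{d}$ is poorly understood --- will require a two-parameter analysis of induced characters of symmetric groups that is absent from the present paper.
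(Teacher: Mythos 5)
The statement you were asked to prove is labeled \emph{Conjecture} \ref{conj:Fourier coefficients of compact Lie groups} in the paper, and the authors do not prove it; they only provide partial evidence. Specifically, Theorem \ref{thm:main_thm_fundemental_rep} establishes the conjecture for $G=\mathrm{SU}_{d}$ restricted to the fundamental representations $\bigwedge^{m}\mathbb{C}^{d}$, Theorem \ref{thm:main thm symmetric powers} handles the symmetric powers, and Proposition \ref{prop:equidistribution- finite rank} proves the bound for fixed $G$ with an exponent $\epsilon(d,w)$ that degenerates as $d\rightarrow\infty$. There is therefore no proof in the paper against which your argument can be compared, and your write-up is, by your own admission, a research program rather than a proof.

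Within that framing, your sketch correctly identifies the paper's machinery and the reason it does not extend: the cancellation in (\ref{eq:twisted.average}) and Lemma \ref{lem:primitive character bounds} hinges on $\sgn$ (or the trivial character) being one-dimensional, which forces the averaged Weingarten sum to be supported on partitions $\lambda$ with at most $\ell_{i}$ columns (Lemma \ref{lem:description of littlewood richardson for Young subgroups}); for a general $\chi_{\lambda}$ the analogous isotypic projection is onto a higher-dimensional subspace whose size you would need to control uniformly in $\lambda$ and $d$. That uniformity --- especially in the regime $|\lambda|\asymp d$, where the asymptotics of $\Wg_{d}$ on $S_{n}$ are not understood --- is precisely the open problem, and you state this correctly. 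However, your treatment of the non-type-$A$ cases is more optimistic than it should be. For $\mathrm{O}_{d}$ and $\mathrm{Sp}_{d}$ the Brauer-algebra Weingarten functions do exist, but the cancellation structure coming from Young subgroups of $S_{n}$ has no obvious analogue, and the counting in Proposition \ref{prop:estimates on Z} would have to be redone over Brauer diagrams, a genuinely different combinatorics. For exceptional $G$, the claim that integration over $G$ ``factors through the central projector onto $G$-invariants in $L^{2}(\U(V))$'' is not correct as stated: the Haar measure on $G$ is not a conditional measure of the Haar measure on $\U(V)$, and there is no direct transfer of Weingarten bounds from $\U(V)$ to $G$. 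One would instead need a Weingarten calculus built from the centralizer algebra $\mathrm{End}_{G}(V^{\otimes a}\otimes(V^{\vee})^{\otimes b})$, which for exceptional groups is not given by a symmetric or Brauer algebra and is poorly understood. In short: your plan is aligned with the paper's philosophy and your diagnosis of the central obstacle in type $A$ is accurate, but the proposal neither closes that gap nor correctly handles the other groups, and the statement remains a conjecture.
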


It is natural to estimate $\epsilon(w)$ in terms of the length $\ell(w)$
of the word $w$. For simple groups of bounded rank, Item (2) of Remark
\ref{rem:main_explicit_dependence} (i.e.~\cite[Theorem 5.2]{ET15})
shows that there are arbitrarily long words $w$ for which $\epsilon(w)$
cannot be larger than $e^{-\sqrt{\ell(w)}}$. However, we believe
that better Fourier decay can be achieved for the high rank case. 
\begin{question}
Can one take $\epsilon(w)$ to be a polynomial in $\ell(w)$, if $\mathrm{rk}(G)\gg_{\ell(w)}1$? 
\end{question}

Theorem \ref{thm:main_thm_fundemental_rep} gives evidence to Conjecture
\ref{conj:Fourier coefficients of compact Lie groups} for $G=\mathrm{SU}_{d}$
and the collection of \emph{fundamental representations} $\left\{ \bigwedge^{m}\C^{d}\right\} _{m=1}^{d}$.
Indeed, for every $\rho\in\mathrm{Irr}(\mathrm{U}_{d})$, since $\left|\rho(\lambda A)\right|=\left|\rho(A)\right|$
for $A\in\mathrm{SU}_{d}$ and $\lambda\in\U_{1}$, and since $\mu_{\U_{d}}$
is the pushforward of $\mu_{\U_{1}}\times\mu_{\mathrm{SU}_{d}}$ by
the multiplication map $(\lambda,A)\mapsto\lambda A$, we have,
\begin{align}
 & \left|a_{w,\mathrm{SU}_{d},\rho}\right|^{2}\leq\mathbb{E}_{X_{1},\ldots,X_{r}\in\mathrm{SU}_{d}}\left(\left|\rho\left(w(X_{1},\ldots,X_{r})\right)\right|^{2}\right)\label{eq:reduction to Ud}\\
= & \mathbb{E}_{(\lambda_{1},X_{1}),\ldots,(\lambda_{r},X_{r})\in\mathrm{SU}_{d}\times\U_{1}}\left(\left|\rho\left(w(\lambda_{1},\ldots,\lambda_{r})w(X_{1},\ldots,X_{r})\right)\right|^{2}\right)\nonumber \\
= & \mathbb{E}_{(\lambda_{1},X_{1}),\ldots,(\lambda_{r},X_{r})\in\mathrm{SU}_{d}\times\U_{1}}\left(\left|\rho\left(w(\lambda_{1}X_{1},\ldots,\lambda_{r}X_{r})\right)\right|^{2}\right)=\mathbb{E}_{\U_{d}}\left(\left|\rho\left(w(X_{1},\ldots,X_{r})\right)\right|^{2}\right),\nonumber 
\end{align}
Theorem \ref{thm:main thm symmetric powers} deals with another family
of irreducible representations $\left\{ \Sym^{m}\C^{d}\right\} _{m=1}^{\left\lfloor d/(16\ell)^{\ell}\right\rfloor }$,
giving further evidence for Conjecture \ref{conj:Fourier coefficients of compact Lie groups}.

Verifying Conjecture \ref{conj:Fourier coefficients of compact Lie groups}
will imply that, for every word $w$, the random walks induced by
the collection of measures $\left\{ \tau_{w,G}\right\} _{G}$, where
$G$ runs over all compact connected simple Lie groups, admit a uniform
$L^{\infty}$-mixing time. Namely, using \cite[Theorem 1]{GLM12},
it will show the existence of $t(w)\in\N$ such that 
\begin{equation}
\left\Vert \frac{\tau_{w,G}^{*t(w)}}{\mu_{G}}-1\right\Vert _{\infty}<1/2,\label{eq:uniform mixing time}
\end{equation}
for every compact connected simple Lie group $G$. By the above discussion,
$t(w)$ grows at least exponentially with $\sqrt{\ell(w)}$ under
no restriction on the rank. If the condition (\ref{eq:uniform mixing time})
is replaced by the condition that $\tau_{w,G}^{*t(w)}$ has bounded
density, one might hope for polynomial bounds. 
\begin{question}
\label{que:words have bounded density}Let $1\neq w\in F_{r}$. Can
one find $t(w)\in\N$ such that for every compact connected semisimple
Lie group $G$, $\tau_{w,G}^{*t(w)}$ has bounded density with respect
to $\mu_{G}$? can $t(w)$ be chosen to have polynomial dependence
on $\ell(w)$?
\end{question}

Question \ref{que:words have bounded density} can be seen as an analytic
specialization of a geometric phenomenon. Let $\varphi:X\rightarrow Y$
be a polynomial map between smooth $\Q$-varieties. We say that $\varphi$
is \emph{(FRS)} if it is flat and its fibers all have rational singularities.
In \cite[Theorem 3.4]{AA16}, Aizenbud and the first author showed
that if $\varphi$ is (FRS), then for every non-Archimedean local
field $F$ and every smooth, compactly supported measure $\mu$ on
$X(F)$, the pushforward $\varphi_{*}\mu$ has bounded density. This
result was extended in \cite{Rei} to the Archimedean case, $F=\R$
or $\C$, and, moreover, if one runs over a large enough family of
local fields, the condition of (FRS) is in fact necessary as well
for the densities of pushforwards to be bounded (see \cite[Theorem 3.4]{AA16}
and \cite[Corollary 6.2]{GHS}).

To rephrase Question \ref{que:words have bounded density} in geometric
term, we further need the following notion from \cite{GH19,GH21}.
\begin{defn}[{\cite[Definition 1.1]{GH19}}]
\label{def:convolution}Let $\varphi:X\rightarrow G$ and $\psi:Y\rightarrow G$
be morphisms from algebraic varieties $X,Y$ to an algebraic group
$G$. We define their \emph{convolution} by 
\[
\varphi*\psi:X\times Y\rightarrow G,\text{ }(x,y)\mapsto\varphi(x)\cdot\psi(y).
\]
We denote by $\varphi^{*k}:X^{k}\rightarrow G$ the \emph{$k$-fold
convolution} of $\varphi$ with itself. 
\end{defn}

Based on the above discussion, a positive answer to the following
question will answer Question \ref{que:words have bounded density}
positively. 
\begin{question}[{\cite[Question 1.15]{GHb}}]
\label{que:conjecture on (FRS) and flatness}Can we find $\alpha,C>0$
such that, for every $w\in F_{r}$ of length $\ell$ and every simple
algebraic group $G$, the word map $w_{G}^{*C\ell^{\alpha}}$ is (FRS)? 
\end{question}

In \cite{GH19,GH21}, Yotam Hendel and the second author and have
shown that any dominant map $\varphi:X\rightarrow G$ from a smooth
variety to a connected algebraic group becomes (FRS) after sufficiently
many self-convolutions. Concrete bounds were given in \cite[Corollary 1.9]{GHS}.
Based on these results, we prove Conjecture \ref{conj:Fourier coefficients of compact Lie groups}
and answer Question \ref{que:words have bounded density} for the
bounded rank case (see Proposition \ref{prop:equidistribution- finite rank}).

To conclude the discussion, we remark that a positive answer for Question
\ref{que:conjecture on (FRS) and flatness} will answer Question \ref{que:words have bounded density}
for compact semisimple $p$-adic groups as well. A significant progress
in this direction was done in the work \cite{GHb}, by Yotam Hendel
and the second author, where singularities of word maps on semisimple
Lie algebras and algebraic groups were studied.

\subsection{Conventions and notations}
\begin{enumerate}
\item We denote the set $\{1,\ldots,N\}$ by $[N]$.
\item For a finite set $X$, we denote the symmetric group on $X$ by $\Sym(X)$
and the space of functions $f:X\rightarrow\C$ by $\C[X]$ . 
\item We write $(-1)^{\sigma}$ for the sign of a permutation $\sigma$. 
\item For a group $G$, a representation is a pair $(\pi,V)$, with $\pi:G\rightarrow\mathrm{GL}(V)$
a homomorphism. We denote the character of $(\pi,V)$ by $\chi_{\pi}$
and denote its dual by $(\pi^{\vee},V^{\vee})$.
\end{enumerate}
\begin{acknowledgement*}
We thank Rami Aizenbud, Yotam Hendel, Michael Larsen, Michael Magee,
Doron Puder, Yotam Shomroni, Ofer Zeitouni and Steve Zelditch for
useful conversations. We thank the referees for their useful comments
and for improving the readability of the paper. NA was supported by
NSF grant DMS\textendash 1902041, IG was supported by AMS\textendash Simons
travel grant, and both of us were supported by BSF grant 2018201.
\end{acknowledgement*}

\section{Preliminaries}

\subsection{\label{subsec:Some-facts-in rep theory}Some facts in representation
theory}

For a compact group $G$, we denote the set of irreducible complex
characters of $G$ by $\mathrm{Irr}(G)$. Given a subgroup $H\leq G$
and a character $\chi\in\mathrm{Irr}(H)$, we denote the induction
of $\chi$ to $G$ by $\mathrm{Ind}_{H}^{G}\chi$. We normalize the
Haar measure to be a probability measure and denote the expectation
with respect to the Haar measure by $\mathbb{E}$. The standard inner
product on functions on $G$ is $\langle f_{1},f_{2}\rangle_{G}=\mathbb{E}f_{1}\overline{f_{2}}$. 

\subsubsection{\label{subsec:Representation-theory-of symmetric}Representation
theory of the symmetric group}

Given $m\in\N$, a \emph{partition} of $m$ is a non-increasing sequence
$\lambda=(\lambda_{1},..,\lambda_{k})$ of non-negative integers that
sum to $m$. In this case, we write $\lambda\vdash m$. Two partitions
are equivalent if they differ only by a string of $0$'s at the end.
A partition $\lambda=(\lambda_{1},..,\lambda_{k})$, with $\lambda_{k}>0$,
is graphically encoded by a \emph{Young diagram}, which is a finite
collection of boxes (or cells) arranged in $k$ left-justified rows,
where the $j$-th row has $\lambda_{j}$ boxes. The \emph{length}
$\ell(\lambda)$ of a partition $\lambda\vdash m$ is the number of
non-zero parts $\lambda_{i}$ or equivalently the number of rows in
the corresponding Young diagram.

The irreducible representations of $S_{m}$ are in bijection with
partitions $\lambda\vdash m$. We write $\chi_{\lambda}\in\mathrm{Irr}(S_{m})$
for the corresponding character. For each cell $(i,j)$ in the Young
diagram of $\lambda$, the \emph{hook length} $h_{\lambda}(i,j)$
is the number of cells $(a,b)$ in the Young diagram of $\lambda$
such that either $a=i$ and $b\geq j$, or $a\geq i$ and $b=j$.
The hook-length formula states that
\begin{equation}
\chi_{\lambda}(1)=\frac{m!}{\prod_{(i,j)\in\lambda}h_{\lambda}(i,j)}.\label{eq:Hook length}
\end{equation}

\begin{defn}
\label{def:=00005Cmu extansion of a Young diagram}~
\begin{enumerate}
\item Fix a Young Diagram $\lambda$ and let $n\in\mathbb{N}$. An \emph{$n$-expansion}
of $\lambda$ is any Young diagram obtained by adding $n$ boxes to
$\lambda$ in such a way that no two boxes are added in the same column.
\item Given a partition $\lambda=(\lambda_{1},\ldots,\lambda_{l_{1}})\vdash k$
and a partition $\mu=(\mu_{1},\ldots,\mu_{l_{2}})\vdash l$, a $\mu$-expansion
of $\,\lambda$ is defined to be a $\mu_{l_{2}}$-expansion of a $\mu_{l_{2}-1}$-expansion
of a $\cdots$ of a $\mu_{1}$-expansion of the Young diagram of $\lambda$.
For a $\mu$-expansion of $\lambda$, we label the boxes added in
the $\mu_{l_{j}}$-expansion by the number $j$ and order the boxes
lexicographically by their position, first from top to bottom and
then from right to left. We say that a $\mu$-expansion of $\lambda$
is \emph{strict} if, for every $p\in\{1,\ldots,l_{2}-1\}$ and every
box $t$, the number of boxes coming before $t$ that are labeled
$p$ is greater than or equal to the number of boxes coming before
$t$ that are labeled $(p+1)$. 
\end{enumerate}
\end{defn}

\begin{thm}[{Littlewood\textendash Richardson rule, \cite[I.9]{Mac95}}]
\label{thm:Littlewood-Richardson}Let $\lambda\vdash k$ and $\mu\vdash m$.
Then,
\[
\mathrm{Ind}_{\mathrm{S}_{k}\times S_{m}}^{S_{k+m}}(\chi_{\lambda}\otimes\chi_{\mu})=\bigoplus_{\nu\vdash k+m}N_{\lambda\mu\nu}\chi_{\nu},
\]
where $N_{\lambda\mu\nu}$ is the number of strict $\mu$-expansions
of $\lambda$ that are a Young diagram of the partition $\nu$.
\end{thm}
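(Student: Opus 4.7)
The plan is to reduce the identity to a statement about symmetric functions via the Frobenius characteristic map, and then to prove that statement using a base case (Pieri's rule) combined with a sign-reversing involution. Recall that the Frobenius characteristic map $\mathrm{ch}\colon\bigoplus_n R(S_n)\to\Lambda$ is a graded ring isomorphism onto the ring of symmetric functions, sending $\chi_\lambda$ to the Schur function $s_\lambda$, where the product structure on the left-hand side is precisely induction from $S_k\times S_m$ up to $S_{k+m}$. Applying $\mathrm{ch}$ therefore turns the desired identity into the purely combinatorial Schur-function identity
\[
s_\lambda s_\mu \;=\; \sum_{\nu\vdash k+m} N_{\lambda\mu\nu}\, s_\nu,
\]
with $N_{\lambda\mu\nu}$ as described in Definition \ref{def:=00005Cmu extansion of a Young diagram}. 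So the problem becomes: show that the multiplicity of $s_\nu$ in $s_\lambda s_\mu$ counts strict $\mu$-expansions of $\lambda$ of shape $\nu$.

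The second step is to establish the single-row case, i.e.\ Pieri's rule: for every $k\geq 1$,
\[
s_\lambda\cdot h_k \;=\; \sum_{\lambda'} s_{\lambda'},
\]
where $\lambda'$ ranges over all $k$-expansions of $\lambda$ in the sense of Definition \ref{def:=00005Cmu extansion of a Young diagram}(1). I would deduce this either from the Jacobi--Trudi determinantal formula together with an elementary sign-reversing involution on the expansion of the determinant, or equivalently by computing $\langle s_\lambda h_k, s_{\lambda'}\rangle$ via the Hall inner product and the dual Cauchy identity. The point of Pieri's rule is that it handles exactly one row of $\mu$ at a time, and horizontal strips are precisely the shapes one is allowed to add.

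The third step is the iterated expansion. Using Jacobi--Trudi write $s_\mu = \det\bigl(h_{\mu_i - i + j}\bigr)_{1\leq i,j\leq\ell(\mu)}$, so that
\[
s_\lambda \cdot s_\mu \;=\; \sum_{\sigma\in S_{\ell(\mu)}} (-1)^\sigma\, s_\lambda\cdot h_{\mu_1+\sigma(1)-1}\cdots h_{\mu_{\ell(\mu)}+\sigma(\ell(\mu))-\ell(\mu)}.
\]
Applying Pieri's rule $\ell(\mu)$ times expands each summand as a signed sum over sequences of horizontal strip additions, where the $j$-th strip, whose boxes are labeled $j$, has size $\mu_j + \sigma(j) - j$. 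This gives a signed count of labeled shapes ending in a given $\nu$.

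The main obstacle, and the heart of the proof, is to construct a sign-reversing involution on the labeled fillings that cancels every term for which $\sigma\neq\mathrm{id}$ or the labeling fails the strictness condition of Definition \ref{def:=00005Cmu extansion of a Young diagram}(2), leaving precisely the strict $\mu$-expansions of $\lambda$ of shape $\nu$ as fixed points (all occurring with $\sigma=\mathrm{id}$ and sign $+1$). One workable implementation is a Bender--Knuth-style swap: read the labeled boxes in the prescribed lexicographic order, find the first position at which the counting inequality between consecutive labels $p$ and $p+1$ is violated, and swap an appropriately chosen pair of boxes (or exchange strip sizes between rows $p$ and $p+1$, simultaneously composing $\sigma$ with the adjacent transposition $(p,p+1)$). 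Checking that this map is well-defined, involutive, and sign-reversing, and that its fixed points match the strict $\mu$-expansions exactly, is the delicate combinatorial core. Once it is in place, the Schur identity follows, and transporting the conclusion back through $\mathrm{ch}^{-1}$ yields the stated decomposition of the induced character.
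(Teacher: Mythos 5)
The paper does not prove this statement; it is cited from \cite[I.9]{Mac95}, so there is no in-text proof to compare against. What remains is to evaluate your sketch on its own terms.

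Your reduction---Frobenius characteristic to pass to Schur functions, Pieri's rule for the single-row case, Jacobi--Trudi to write $s_\mu$ as a signed sum of products of complete homogeneous symmetric functions, and an involution to cancel the unwanted terms---is a genuine and well-known strategy (essentially Gasharov's proof, closely related to the Remmel--Shimozono ``switching'' argument), and it is quite different in flavor from Macdonald's development, which goes through Hall polynomials and plactic/RSK combinatorics. The first three steps of your route are standard and correctly stated. The difficulty is that step four, the sign-reversing involution, is where essentially all of the mathematical content of the theorem is concentrated, and you do not actually construct or verify it: you gesture at ``find the first violation and swap a pair of boxes, or exchange strip sizes between adjacent rows while composing $\sigma$ with a transposition,'' and then explicitly defer ``checking that this map is well-defined, involutive, and sign-reversing, and that its fixed points match the strict $\mu$-expansions exactly.'' But that checking \emph{is} the theorem: one must show the swap returns a legal chain of horizontal strips (not obvious, since strip sizes $\mu_j+\sigma(j)-j$ can be negative or zero, and the resulting sequence of shapes must remain nested), that it is an involution because the location of the first violation is preserved under the swap, and that the fixed points are exactly the lattice-word (``strict'') expansions with $\sigma=\mathrm{id}$. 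These verifications are famously subtle---the Littlewood--Richardson rule is the standard cautionary example of a statement whose early published ``proofs'' were wrong---so asserting the existence of the involution without constructing it leaves the argument incomplete. The outline is correct in shape, but it is an outline, not a proof.
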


We will need the following consequence of Theorem \ref{thm:Littlewood-Richardson}.
\begin{lem}
\label{lem:description of littlewood richardson for Young subgroups}Let
$l\in\Z_{\geq2}$ and identify $S_{m}^{l}$ with its image in the
standard embedding $S_{m}^{l}\hookrightarrow S_{ml}$. Then, each
$\chi_{\nu}\in\mathrm{Irr}(S_{ml})$ appearing in $\mathrm{Ind}_{\mathrm{S}_{m}^{l}}^{S_{ml}}(1)$
(resp., $\mathrm{Ind}_{\mathrm{S}_{m}^{l}}^{S_{ml}}(\sgn)$) corresponds
to a partition $\nu\vdash ml$ with at most $l$ rows (resp., $l$
columns).
\end{lem}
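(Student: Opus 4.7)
My plan is to prove both statements by induction on $l$, using the Littlewood--Richardson rule (Theorem \ref{thm:Littlewood-Richardson}) in stages to propagate the row bound, and then to deduce the sign case from the trivial case by twisting with the sign character of $S_{ml}$.

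For the trivial character, the base case $l=1$ is clear since $\mathrm{Ind}_{S_m}^{S_m}(1)=\chi_{(m)}$, which has one row. For the inductive step, I would write
\[
\mathrm{Ind}_{S_m^l}^{S_{ml}}(1) \;=\; \mathrm{Ind}_{S_{m(l-1)}\times S_m}^{S_{ml}}\!\left(\mathrm{Ind}_{S_m^{l-1}}^{S_{m(l-1)}}(1) \boxtimes \chi_{(m)}\right),
\]
so every irreducible constituent $\chi_\nu$ arises from applying Theorem \ref{thm:Littlewood-Richardson} to a pair $(\chi_\lambda,\chi_{(m)})$, where $\lambda$ has at most $l-1$ rows by the inductive hypothesis. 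Since $\mu=(m)$ has a single part, the strictness condition is vacuous, and $\nu$ is simply an $(m)$-expansion of $\lambda$ in the sense of Definition \ref{def:=00005Cmu extansion of a Young diagram}. Reading this column-by-column: each column of $\lambda$ has height $\le l-1$, and an $(m)$-expansion adds at most one box in any given column, so each column of $\nu$ has height $\le l$, i.e., $\nu$ has at most $l$ rows, closing the induction.

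For the sign character case, I would exploit the fact that $\sgn_{S_{ml}}$ restricts to $S_m^l$ as the product of the sign characters of the factors, which is the character denoted $\sgn$ in the statement. Combined with the standard identity $\mathrm{Ind}_H^G(\chi \cdot \epsilon|_H)=\mathrm{Ind}_H^G(\chi)\otimes\epsilon$ for a one-dimensional character $\epsilon$ of $G$, this gives
\[
\mathrm{Ind}_{S_m^l}^{S_{ml}}(\sgn) \;\cong\; \mathrm{Ind}_{S_m^l}^{S_{ml}}(1) \otimes \sgn_{S_{ml}}.
\]
Since tensoring with $\sgn_{S_{ml}}$ sends $\chi_\nu$ to $\chi_{\nu^{T}}$ (the conjugate partition), the trivial-case bound $\ell(\nu)\le l$ transforms precisely into the bound that the conjugate partitions have at most $l$ columns, as required. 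The only substantive step is the combinatorial claim about expansions in the inductive step, which as noted above is immediate once one argues column-by-column; there is no genuine obstacle beyond setting up the iterated induction carefully.
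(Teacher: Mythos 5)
Your proof of the trivial-character half is essentially identical to the paper's: the same iterated decomposition $\mathrm{Ind}_{S_m^l}^{S_{ml}}(1)=\mathrm{Ind}_{S_{m(l-1)}\times S_m}^{S_{ml}}\bigl(\mathrm{Ind}_{S_m^{l-1}}^{S_{m(l-1)}}(1)\otimes 1\bigr)$, the same appeal to Theorem \ref{thm:Littlewood-Richardson}, and the same observation that an $(m)$-expansion (for which the strictness constraint is vacuous) adds at most one box to any column and therefore increases the row count by at most one. Your column-by-column phrasing is a correct unpacking of the paper's one-line remark that ``a strict $\lambda$-expansion of $\mu$ increases the number of rows by at most one.''

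For the sign half you take a genuinely different route. The paper simply says the proof is ``similar,'' which amounts to rerunning the induction with $\chi_{(1^m)}$ in place of $\chi_{(m)}$ and using the fact that a strict $(1^m)$-expansion is a vertical strip (Pieri's rule for the elementary symmetric function $e_m$), hence increases the number of columns by at most one. You instead observe that $\sgn_{S_{ml}}|_{S_m^l}=\sgn^{\otimes l}$ because the factors act on disjoint blocks, invoke the projection-formula identity $\mathrm{Ind}_H^G(\chi\cdot\epsilon|_H)\cong\mathrm{Ind}_H^G(\chi)\otimes\epsilon$, and use that tensoring with $\sgn_{S_{ml}}$ transposes Young diagrams ($\chi_\nu\mapsto\chi_{\nu^{T}}$). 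This cleanly reduces the sign case to the trivial case already proved, trading a second combinatorial induction for a standard character-theoretic twist; it buys a shorter argument and avoids having to verify separately that strict $(1^m)$-expansions are vertical strips. Both routes are correct, and I see no gaps in yours.
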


\begin{proof}
We prove the statement for the trivial representation 1 by induction
on $l$. The proof for $\sgn$ is similar. The character $1$ of $S_{m}$
corresponds to the partition $\lambda$ consisting of one row of length
$m$. By the induction hypothesis, we may assume that $\mathrm{Ind}_{\mathrm{S}_{m}^{j}}^{S_{mj}}(1)=\bigoplus_{\mu\vdash mj}m_{\mu}\chi_{\mu}$,
with $m_{\mu}>0$ only if $\mu$ has at most $j$ rows, for all $j<l$.
Hence we can write
\begin{equation}
\mathrm{Ind}_{\mathrm{S}_{m}^{l}}^{S_{ml}}(1)=\mathrm{Ind}_{\mathrm{S}_{m(l-1)}\times S_{m}}^{S_{ml}}(\mathrm{Ind}_{\mathrm{S}_{m}^{l-1}}^{\mathrm{S}_{m(l-1)}}(1)\otimes1)=\bigoplus_{\mu\vdash m(l-1)}m_{\mu}\mathrm{Ind}_{\mathrm{S}_{m(l-1)}\times S_{m}}^{S_{ml}}(\chi_{\mu}\otimes1).\label{eq:double induction}
\end{equation}
By Theorem \ref{thm:Littlewood-Richardson} and since a strict $\lambda$-expansion
of $\mu$ increases the number of rows by at most one, the lemma follows. 
\end{proof}

\subsubsection{\label{subsec:Representation-theory-of unitary }Representation theory
of the unitary group}

The irreducible representations of $\U_{d}$ can be identified with
the irreducible rational representations of $\mathrm{GL}_{d}(\C)$.
More precisely, the restriction map $\rho\mapsto\rho|_{\U_{d}}$ induces
a bijection $\mathrm{Irr}(\mathrm{GL}_{d}(\C))\rightarrow\mathrm{Irr}(\U_{d})$.
Moreover, the set $\mathrm{Irr}(\U_{d})$ is in bijection with the
set $\Lambda$ of dominant weights,
\[
\Lambda:=\left\{ (\lambda_{1},\ldots,\lambda_{d}):\lambda_{1}\geq\ldots\geq\lambda_{d},\,\,\lambda_{i}\in\Z\right\} .
\]
We denote the representation corresponding to $\lambda\in\Lambda$
by $(\rho_{\lambda},V_{\lambda})$. The irreducible representations
\[
\C^{d},\bigwedge\nolimits ^{2}\C^{d},\ldots,\bigwedge\nolimits ^{d}\C^{d},
\]
are called \emph{the fundamental representations}, and we have $\bigwedge\nolimits ^{m}\C^{d}\simeq V_{(1,\ldots,1,0,\ldots,0)}$,
with $1$ appearing $m$ times. In particular, the standard representation
$\C^{d}$ is $V_{(1,0,\ldots,0)}$. Note that $\bigwedge^{d}\C^{d}$
is the determinant representation $\chi_{\det}$. We identify a weight
$\lambda\in\Lambda$ such that $\lambda_{d}\geq0$ with a partition
$(\lambda_{1},\ldots,\lambda_{d})$. 
\begin{rem}[{\cite[I.6, Exc. 6.4]{FH91}}]
\label{rem:formula for dimension of rep}For each $\lambda=(\lambda_{1},\ldots,\lambda_{d})\vdash m$,
\begin{equation}
\rho_{\lambda}(1)=\frac{\chi_{\lambda}(1)\cdot\prod_{(i,j)\in\lambda}(d+j-i)}{m!},\label{eq:formula for dimension}
\end{equation}
where $(i,j)$ are the coordinates of the cells in the Young diagram
with shape $\lambda$. 
\end{rem}

Given $\lambda,\mu\in\Lambda$, the irreducible subrepresentations
of $\rho_{\lambda}\otimes\rho_{\mu}$ are determined by the Littlewood\textendash Richardson
rule as follows. 
\begin{thm}[{Littlewood\textendash Richardson rule, see e.g.~\cite[I.6, Eq. (6.7)]{FH91}}]
\label{thm:Littlewood-Richardson-unitary}Let $\lambda,\mu\in\Lambda$
and suppose that $\lambda_{d},\mu_{d}\geq0$. Let $N_{\lambda\mu\nu}$
be the coefficients from Theorem \ref{thm:Littlewood-Richardson}.
Then:
\[
\rho_{\lambda}\otimes\rho_{\mu}=\bigoplus_{\nu:\nu_{d}\geq0}N_{\lambda\mu\nu}\rho_{\nu}.
\]
\end{thm}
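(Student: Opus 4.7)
The plan is to reduce the statement to the symmetric-group Littlewood--Richardson rule (Theorem \ref{thm:Littlewood-Richardson}) by means of \emph{Schur--Weyl duality}. Since $\lambda_{d},\mu_{d}\geq0$, both $\lambda$ and $\mu$ are partitions with at most $d$ parts, so $\rho_{\lambda}$ and $\rho_{\mu}$ are polynomial representations of $\mathrm{GL}_{d}(\C)$. Set $k=|\lambda|$ and $m=|\mu|$.

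Schur--Weyl duality provides an isomorphism of $\U_{d}\times S_{N}$-modules
\[
(\C^{d})^{\otimes N}\;\cong\;\bigoplus_{\pi\vdash N,\;\ell(\pi)\leq d}\rho_{\pi}\otimes W_{\pi},
\]
where $W_{\pi}$ denotes the irreducible $S_{N}$-module with character $\chi_{\pi}$. I would apply this in two ways with $N=k+m$. First, directly, to get $(\C^{d})^{\otimes(k+m)}\cong\bigoplus_{\nu}\rho_{\nu}\otimes W_{\nu}$. Second, after factoring $(\C^{d})^{\otimes(k+m)}=(\C^{d})^{\otimes k}\otimes(\C^{d})^{\otimes m}$ and applying Schur--Weyl to each tensor factor, one sees that the $(W_{\lambda}\otimes W_{\mu})$-isotypic component under the natural $S_{k}\times S_{m}$-action is exactly $\rho_{\lambda}\otimes\rho_{\mu}$.

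Next, I compute that same isotypic component from the first description. By Frobenius reciprocity,
\[
\mathrm{Hom}_{S_{k}\times S_{m}}\!\bigl(W_{\lambda}\otimes W_{\mu},\,(\C^{d})^{\otimes(k+m)}\bigr)\cong\mathrm{Hom}_{S_{k+m}}\!\bigl(\mathrm{Ind}_{S_{k}\times S_{m}}^{S_{k+m}}(W_{\lambda}\otimes W_{\mu}),\,(\C^{d})^{\otimes(k+m)}\bigr).
\]
By Theorem \ref{thm:Littlewood-Richardson}, the induced representation equals $\bigoplus_{\nu}N_{\lambda\mu\nu}W_{\nu}$, and substituting the direct Schur--Weyl decomposition into the right-hand side identifies it, as a $\U_{d}$-representation, with $\bigoplus_{\nu:\,\ell(\nu)\leq d}N_{\lambda\mu\nu}\rho_{\nu}$. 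Equating the two computations of the isotypic component yields the claim, since the condition $\ell(\nu)\leq d$ is the same as $\nu_{d}\geq0$.

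The only subtle point is the automatic disappearance of partitions $\nu$ with $\ell(\nu)>d$: the symmetric-group rule may a priori produce such $\nu$, but Schur--Weyl only detects partitions fitting in $d$ rows, so these terms contribute zero on the $\U_{d}$ side and matching the restricted summation in the statement. No substantial obstacle is anticipated; the entire argument is a clean consequence of Schur--Weyl duality and Frobenius reciprocity, with all of the combinatorial content imported from Theorem \ref{thm:Littlewood-Richardson}.
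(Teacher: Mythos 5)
The paper cites this as a standard result (Fulton--Harris I.6, Eq.~(6.7)) without proof, so there is no paper proof to compare against. Your argument is correct: the two applications of Schur--Weyl duality---globally to $(\C^{d})^{\otimes(k+m)}$ and factorwise after writing it as $(\C^{d})^{\otimes k}\otimes(\C^{d})^{\otimes m}$---together with the $\U_{d}$-equivariance of the Frobenius-reciprocity isomorphism (which holds because the $\U_{d}$-action commutes with the $S_{k+m}$-action and Frobenius reciprocity is natural in the ambient module) cleanly reduce the $\U_{d}$ Littlewood--Richardson rule to the symmetric-group version, Theorem~\ref{thm:Littlewood-Richardson}. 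The restriction to $\nu$ with $\nu_{d}\geq0$ is indeed automatic, since $\mathrm{Hom}_{S_{k+m}}\bigl(W_{\nu},(\C^{d})^{\otimes(k+m)}\bigr)$ vanishes whenever $\ell(\nu)>d$. Fulton--Harris derive the statement via Schur polynomials and the symmetric-function Littlewood--Richardson rule; your Schur--Weyl route is logically equivalent, perhaps more conceptual, and uses only facts already stated in the paper (Theorems~\ref{thm:Littlewood-Richardson} and~\ref{thm:Schur-Weyl duality}).
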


\begin{rem}
\label{rem:generalized Littlewood-Richardson}For $\lambda,\mu\in\Lambda$,
set $\widetilde{\lambda}:=\lambda-(\lambda_{d},\ldots,\lambda_{d})$
and $\widetilde{\mu}:=\mu-(\mu_{d},\ldots,\mu_{d})$. Then $\rho_{\lambda}=\chi_{\det}^{\lambda_{d}}\cdot\rho_{\widetilde{\lambda}}$
and $\rho_{\mu}=\chi_{\det}^{\mu_{d}}\cdot\rho_{\widetilde{\mu}}$,
and hence by Theorem \ref{thm:Littlewood-Richardson-unitary}, one
has: 
\begin{equation}
\rho_{\lambda}\otimes\rho_{\mu}=\chi_{\det}^{\lambda_{d}+\mu_{d}}\rho_{\widetilde{\lambda}}\otimes\rho_{\widetilde{\mu}}=\chi_{\det}^{\lambda_{d}+\mu_{d}}\bigoplus_{\nu}N_{\widetilde{\lambda}\widetilde{\mu}\nu}\rho_{\nu}.\label{eq:generalized Littlewood-Richardson}
\end{equation}
\end{rem}

\subsubsection{\label{subsec:Averaging-characters-over}Averaging characters over
cosets}
\begin{lem}
\label{lem:primitive character bounds}Let $G$ be a finite group,
let $(\pi,V)$ be an irreducible representation of $G$, let $H\leq G$
be a subgroup, and let $\lambda$ be any one-dimensional character
of $H$. Then, for every $g\in G$, 
\[
\left|\frac{1}{\left|H\right|}\sum_{h\in H}\lambda^{-1}(h)\chi_{\pi}(gh)\right|\leq\left\langle \chi_{\pi}|_{H},\lambda\right\rangle _{H}.
\]
In particular, if $\left\langle \chi_{\pi}|_{H},\lambda\right\rangle _{H}=0$,
then $\sum_{h\in H}\lambda^{-1}(h)\chi_{\pi}(gh)=0$. 
\end{lem}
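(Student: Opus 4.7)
The plan is to interpret the averaging sum as the trace of an operator composed with an explicit orthogonal projection, and then bound this trace via the standard inequality $|\tr(T)| \le \dim(\mathrm{Image}) \cdot \|T\|_{\mathrm{op}}$.

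Equip $V$ with a $G$-invariant Hermitian inner product (possible since $G$ is finite). Define
\[
P := \frac{1}{|H|}\sum_{h\in H}\lambda^{-1}(h)\pi(h) \in \mathrm{End}(V).
\]
First, I would verify that $P$ is an orthogonal projection onto the $\lambda$-isotypic subspace $V_\lambda := \{v\in V : \pi(h)v = \lambda(h)v \text{ for all } h\in H\}$. The identity $P^2 = P$ is a straightforward computation using the substitution $h\mapsto h_1 h_2$ in the double sum, and the relation $P^* = P$ follows from $|\lambda(h)|=1$ together with unitarity of $\pi(h)$, after reindexing $h\mapsto h^{-1}$. Thus $P$ is an orthogonal projection, and its image is exactly $V_\lambda$.

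Next, compute the rank of $P$: since $P$ is a projection,
\[
\mathrm{rank}(P) = \tr(P) = \frac{1}{|H|}\sum_{h\in H}\lambda^{-1}(h)\chi_\pi(h) = \langle \chi_\pi|_H, \lambda\rangle_H.
\]
On the other hand,
\[
\tr(\pi(g) P) = \frac{1}{|H|}\sum_{h\in H}\lambda^{-1}(h)\tr(\pi(g)\pi(h)) = \frac{1}{|H|}\sum_{h\in H}\lambda^{-1}(h)\chi_\pi(gh),
\]
which is exactly the quantity to be bounded.

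Finally, using the cyclic property and $P^2=P$, write $\tr(\pi(g)P) = \tr(P\pi(g)P)$. The operator $P\pi(g)P$ is supported on $V_\lambda$, whose dimension equals $\langle \chi_\pi|_H, \lambda\rangle_H$, and since $\pi(g)$ is unitary and $P$ is an orthogonal projection, $\|P\pi(g)P\|_{\mathrm{op}}\le 1$. Hence
\[
|\tr(\pi(g)P)| \le \dim(V_\lambda)\cdot \|P\pi(g)P\|_{\mathrm{op}} \le \langle \chi_\pi|_H, \lambda\rangle_H,
\]
which gives the desired inequality. The ``in particular'' statement is then immediate, since the vanishing of $\langle \chi_\pi|_H,\lambda\rangle_H$ forces $V_\lambda = 0$, hence $P=0$. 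There is no real obstacle here; the only point to be careful about is justifying that $P$ is self-adjoint with respect to the chosen invariant inner product, which is where $|\lambda(h)|=1$ enters.
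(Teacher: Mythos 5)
Your proof is correct and is essentially the same argument as the paper's: both recognize the averaging sum as $|H|\tr(\pi(g)P)$ for the orthogonal projection $P$ onto the $(H,\lambda)$-equivariant subspace, and bound this trace by $\operatorname{rank}(P)=\langle\chi_\pi|_H,\lambda\rangle_H$ using unitarity. The only cosmetic difference is that you verify $P$ is an orthogonal projection by direct computation ($P^2=P$, $P^*=P$), whereas the paper decomposes $\pi|_H$ into irreducibles and invokes Schur's lemma; your packaging is a bit cleaner, but the underlying mechanism is identical.
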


\begin{proof}
Write $\pi|_{H}=\bigoplus_{i=1}^{\widetilde{N}}\pi_{i}$ with each
$(\pi_{i},V_{i})$ an irreducible representation of $H$. For each
$i$ and $h'\in H$, 
\begin{align*}
\left(\sum_{h\in H}\lambda^{-1}(h)\pi_{i}(h)\right)\pi_{i}(h') & =\sum_{h\in H}\lambda^{-1}(h)\pi_{i}(hh')=\sum_{h\in H}\lambda^{-1}(hh'^{-1})\pi_{i}(h)\\
 & =\sum_{h\in H}\lambda^{-1}(h'^{-1}h)\pi_{i}(h)=\sum_{h\in H}\lambda^{-1}(h)\pi_{i}(h'h)=\pi_{i}(h')\left(\sum_{h\in H}\lambda^{-1}(h)\pi_{i}(h)\right).
\end{align*}
By Schur's lemma, $\sum_{h\in H}\lambda^{-1}(h)\pi_{i}(h)$ is a scalar
matrix $\alpha\cdot I_{V_{i}}$, for some $\alpha\in\C$. Hence,
\begin{equation}
\alpha\cdot\chi_{\pi_{i}}(1)=\tr\left(\sum_{h\in H}\lambda^{-1}(h)\pi_{i}(h)\right)=\sum_{h}\lambda^{-1}(h)\chi_{\pi_{i}}(h)=\begin{cases}
\left|H\right| & \text{if }\chi_{\pi_{i}}=\lambda\\
0 & \text{else}
\end{cases}.\label{eq:averaging over a caharcter}
\end{equation}
Let $L:=\left\{ v\in V:\pi(h)v=\lambda(h)\cdot v,\,\,\forall h\in H\right\} $
be the subspace of $(H,\lambda)$-equivariant vectors in $V$ and
let $L^{\bot}$ be an $H$-invariant subspace of $V$ with $V=L\oplus L^{\bot}$.
By (\ref{eq:averaging over a caharcter}), the map $A:=\sum_{h\in H}\lambda^{-1}(h)\pi(h)\in\mathrm{End}(V)$
satisfies $A|_{L^{\bot}}=0$ and $A|_{L}=\left|H\right|\cdot I_{L}$.
Take an orthonormal basis $v_{1},\ldots,v_{N}$ for $V$ with $L=\langle v_{1},\ldots,v_{M}\rangle$,
$L^{\bot}=\langle v_{M+1},\ldots,v_{N}\rangle$. Then:
\[
\left|\sum_{h\in H}\lambda^{-1}(h)\chi_{\pi}(gh)\right|=\left|\sum_{i=1}^{N}\langle\pi(g)\left(\sum_{h\in H}\lambda^{-1}(h)\pi(h)\right)v_{i},v_{i}\rangle\right|=\left|H\right|\left|\sum_{i=1}^{M}\langle\pi(g)v_{i},v_{i}\rangle\right|\leq M\left|H\right|,
\]
and the lemma follows. 
\end{proof}
The following lemma gives a different estimate on the average of a
character over a coset, and this estimate is sharper when the double
coset $HgH$ is large. We will not need these alternative estimates,
but we thought it could be useful to state them. 
\begin{lem}
\label{lem:bounds on double cosets}Let $G$ be a finite group, and
let $H\leq G$ be a subgroup. Then, for each $\chi\in\mathrm{Irr}(G)$
and each $g\in G$, 
\[
\left|\frac{1}{\left|H\right|}\sum_{h\in H}\chi(hg)\right|\leq\frac{\langle\chi,1\rangle_{H}^{1/2}\cdot\left|G\right|^{1/2}}{\left|HgH\right|^{1/2}\chi(1)^{1/2}}.
\]
\end{lem}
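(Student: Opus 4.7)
The plan is to recognize $\frac{1}{|H|}\sum_{h\in H}\chi(hg)$ as the trace of an operator whose modulus squared is constant on $HgH$, and then bound the sum of that modulus squared over all of $G$ via Schur's orthogonality of matrix coefficients.

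Let $(\pi,V)$ be the irreducible representation affording $\chi$, and let $P:=\frac{1}{|H|}\sum_{h\in H}\pi(h)\in\mathrm{End}(V)$ be the averaging projection onto $V^H$, whose rank is $k:=\langle\chi,1\rangle_H$. Set $f(x):=\tr(P\pi(x))$. By cyclicity of trace, the left-hand side of the desired inequality is exactly $f(g)$, so the task reduces to bounding $|f(g)|$.

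The first key step is the two-sided absorption identity $P\pi(h)=\pi(h)P=P$ for every $h\in H$, which is a one-line reindexing from the definition of $P$. Together with cyclicity of trace, it shows that $f$ is constant on the double coset $HgH$, so
\[
\sum_{x\in HgH}|f(x)|^2=|HgH|\cdot|f(g)|^2.
\]
The second key step is a direct computation of $\sum_{x\in G}|f(x)|^2$. Pick an orthonormal basis $e_1,\dots,e_k$ of $V^H$ (extended to one of $V$) and write $\pi_{aa}(x):=\langle\pi(x)e_a,e_a\rangle$, so that $f(x)=\sum_{a=1}^k\pi_{aa}(x)$. Schur's orthogonality for matrix coefficients of an irreducible representation gives $\sum_{x\in G}\pi_{aa}(x)\overline{\pi_{a'a'}(x)}=\frac{|G|}{\chi(1)}\delta_{a,a'}$, so expanding the square and summing:
\[
\sum_{x\in G}|f(x)|^2=\sum_{a,a'=1}^k\sum_{x\in G}\pi_{aa}(x)\overline{\pi_{a'a'}(x)}=\frac{k|G|}{\chi(1)}.
\]
Combining the two steps yields $|HgH|\cdot|f(g)|^2\leq \frac{k|G|}{\chi(1)}$, and taking a square root gives the claimed inequality.

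The only pitfall I would flag, and it is what makes the bound sharp in $k$, is that one must apply Schur orthogonality to the entire sum $\sum_{a}\pi_{aa}$ at once, not term by term. The naive route via Cauchy\textendash Schwarz $|\sum_a\pi_{aa}(g)|^2\leq k\sum_a|\pi_{aa}(g)|^2$ followed by the same double-coset/orthogonality argument for each diagonal matrix coefficient produces a spurious extra factor of $k$; the cross terms between distinct diagonal matrix coefficients genuinely vanish, and one should exploit that cancellation up front. Beyond this point, the argument is a direct packaging of Schur orthogonality with the bi-invariance of $\tr(P\pi(\cdot))$ under $H$, and I anticipate no further obstacle.
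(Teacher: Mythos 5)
Your proof is correct, and it takes a genuinely different route from the one in the paper. The paper applies the non-commutative Fourier transform and the Plancherel theorem to the normalized indicator function $\psi_{HgH}=\frac{1}{\left|HgH\right|}1_{HgH}$: its Fourier coefficient at $\chi$ is the compressed operator $P\pi(g^{-1})P$, the Plancherel identity $\left\Vert \psi_{HgH}\right\Vert _{2}^{2}=\frac{\left|G\right|}{\left|HgH\right|}$ is then truncated by discarding all irreducibles other than $\chi$, and a Cauchy--Schwarz step compares $\bigl|\sum_{i}\langle\pi(g^{-1})v_{i},v_{i}\rangle\bigr|^{2}$ with the full Hilbert--Schmidt norm, which is where the factor $\langle\chi,1\rangle_{H}$ enters. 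You instead work with the single function $f(x)=\tr(P\pi(x))$, use its bi-$H$-invariance to see it is constant on $HgH$, compute $\sum_{x\in G}\left|f(x)\right|^{2}=\frac{\langle\chi,1\rangle_{H}\left|G\right|}{\chi(1)}$ exactly by Schur orthogonality of matrix coefficients, and then discard all double cosets other than $HgH$. The two arguments are dual: the paper throws away the other irreducible representations, you throw away the other double cosets, and both yield the same bound. What your version buys is the avoidance of any Cauchy--Schwarz step, with $\langle\chi,1\rangle_{H}$ appearing as the exact $L^{2}$-norm of $f$; note, however, that your cautionary remark about a ``spurious extra factor of $k$'' applies only to the particular naive route you describe, not to the paper's argument, whose Cauchy--Schwarz is offset by passing to the full Hilbert--Schmidt norm and still gives $\langle\chi,1\rangle_{H}^{1/2}$. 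One small point to make explicit: fix a $G$-invariant inner product on $V$, so that $P$ is the orthogonal projection onto $V^{H}$ and the matrix coefficients $\pi_{aa}$ are those of a unitary realization, as the form of Schur orthogonality you invoke requires; the paper makes the same choice.
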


\begin{proof}
Let $G$ be a finite group. For each $\chi\in\mathrm{Irr}(G)$, we
denote by $(\pi_{\chi},V_{\chi})$ the representation corresponding
to $\chi$. The non-commutative Fourier transform (see e.g.~\cite[Section 2.3]{App14})
is the map $\mathcal{F}:\C[G]\rightarrow\bigoplus_{\chi\in\mathrm{Irr}(G)}\mathrm{End}(V_{\chi})$
defined by $f\mapsto\widehat{f}:=\left(\widehat{f}(\chi)\right)_{\chi\in\mathrm{Irr}(G)}$,
where $\widehat{f}(\chi)=\frac{1}{\left|G\right|}\sum_{g'\in G}f(g')\pi_{\chi}(g'^{-1})$.
We denote by $\left\Vert f\right\Vert _{2}:=\left(\frac{1}{\left|G\right|}\sum_{g'\in G}\left|f(g')\right|^{2}\right)^{\frac{1}{2}}$.
Similarly, for a collection of endomorphisms $(A_{\chi})_{\chi\in\mathrm{Irr}(G)}\in\bigoplus_{\chi\in\mathrm{Irr}(G)}\mathrm{End}(V_{\chi})$,
with $A_{\chi}\in\mathrm{End}(V_{\chi})$, we define
\[
\left\Vert (A_{\chi})_{\chi\in\mathrm{Irr}(G)}\right\Vert _{2}:=\left(\sum_{\chi\in\mathrm{Irr}(G)}\chi(1)\cdot\left\Vert A_{\chi}\right\Vert _{\mathrm{HS}}^{2}\right)^{\frac{1}{2}},
\]
where $\left\Vert A_{\chi}\right\Vert _{\mathrm{HS}}:=\tr(A_{\chi}\cdot A_{\chi}^{*})^{\frac{1}{2}}$
is the Hilbert\textendash Schmidt norm on $\mathrm{End}(V_{\chi})$.
The Plancherel Theorem (see e.g.~\cite[Theorem 2.3.1(2)]{App14}),
states that
\begin{equation}
\left\Vert f\right\Vert _{2}=\left\Vert \widehat{f}\right\Vert _{2}.\label{eq:Plancherel}
\end{equation}
Let $\psi_{HgH}:=\frac{1}{\left|HgH\right|}1_{HgH}$. For each $\chi\in\mathrm{Irr}(G)$,
one has 
\[
\widehat{\psi_{HgH}}(\chi)=\frac{1}{\left|G\right|}\sum_{g'\in G}\psi_{HgH}(g')\pi_{\chi}(g'^{-1})=\frac{1}{\left|HgH\right|\left|G\right|}\sum_{g'\in HgH}\pi_{\chi}(g'^{-1}).
\]
The square of the $L^{2}$-norm of $\psi_{HgH}$ is given by: 
\begin{equation}
\left\Vert \psi_{HgH}\right\Vert _{2}^{2}=\frac{1}{\left|G\right|}\sum_{g'\in G}\left(\psi_{HgH}(g')\right)^{2}=\frac{1}{\left|G\right|}\sum_{g'\in HgH}\frac{1}{\left|HgH\right|^{2}}=\frac{1}{\left|HgH\right|\left|G\right|}.\label{eq:norm of function}
\end{equation}
Let $v_{1},\ldots,v_{M}$ be an orthonormal basis of $V_{\chi}^{H}:=\left\{ v\in V_{\chi}:\pi_{\chi}(h).v=v,\,\,\forall h\in H\right\} $
with respect to some $G$-invariant inner product $\langle\,,\,\rangle$
on $V_{\chi}$, with $M=\langle\chi,1\rangle_{H}$. Let $\left(V_{\chi}^{H}\right)^{\perp}$
be the orthogonal complement to $V_{\chi}^{H}$ in $V_{\chi}$. In
the proof of Lemma \ref{lem:primitive character bounds}, in the case
that $\lambda=1$, we have seen that 
\begin{equation}
\sum_{h\in H}\pi_{\chi}(h).v=\begin{cases}
0 & \text{if }v\in\left(V_{\chi}^{H}\right)^{\perp}\\
\left|H\right|\cdot v & \text{if }v\in V_{\chi}^{H}.
\end{cases}\label{eq:averaging operator}
\end{equation}
In particular, we have 
\begin{align*}
\left\langle \sum_{g'\in HgH}\pi_{\chi}(g'^{-1}).v,v\right\rangle  & =\frac{\left|HgH\right|}{\left|H\right|^{2}}\left\langle \sum_{h',h\in H}\pi_{\chi}(h'g^{-1}h).v,v\right\rangle =\frac{\left|HgH\right|}{\left|H\right|^{2}}\left\langle (\sum_{h'\in H}\pi_{\chi}(h')).(\sum_{h\in H}\pi_{\chi}(g^{-1}h).v),v\right\rangle \\
 & =\frac{\left|HgH\right|}{\left|H\right|^{2}}\left\langle \sum_{h\in H}\pi_{\chi}(g^{-1}h).v,\sum_{h'\in H}\pi_{\chi}(h')v\right\rangle =\begin{cases}
0 & \text{if }v\in\left(V_{\chi}^{H}\right)^{\perp}\\
\left|HgH\right|\langle\pi_{\chi}(g^{-1}).v,v\rangle & \text{if }v\in V_{\chi}^{H}.
\end{cases}
\end{align*}
Hence,
\begin{equation}
\left\Vert \widehat{\psi_{HgH}}\right\Vert _{2}^{2}=\sum_{\chi\in\mathrm{Irr}(G)}\chi(1)\left\Vert \frac{1}{\left|HgH\right|\left|G\right|}\sum_{g'\in HgH}\pi_{\chi}(g'^{-1})\right\Vert _{\mathrm{HS}}^{2}=\sum_{\chi\in\mathrm{Irr}(G)}\frac{\chi(1)}{\left|G\right|^{2}}\sum_{i,j=1}^{M}\left|\langle\pi_{\chi}(g^{-1}).v_{i},v_{j}\rangle\right|^{2},\label{eq:norm of Fourier}
\end{equation}
By (\ref{eq:Plancherel}), (\ref{eq:norm of function}) is equal to
(\ref{eq:norm of Fourier}), hence,
\begin{align*}
\left|\frac{1}{\left|H\right|}\sum_{h\in H}\chi(hg^{-1})\right|^{2} & =\left|\sum_{i=1}^{M}\langle\pi_{\chi}(g^{-1}).v_{i},v_{i}\rangle\right|^{2}\leq M\sum_{i=1}^{M}\left|\langle\pi_{\chi}(g^{-1}).v_{i},v_{i}\rangle\right|^{2}\\
 & \leq M\sum_{i,j=1}^{M}\left|\langle\pi_{\chi}(g^{-1}).v_{i},v_{j}\rangle\right|^{2}\leq\frac{M\left|G\right|}{\chi(1)\left|HgH\right|},
\end{align*}
where the first equality follows from (\ref{eq:averaging operator}),
and the first inequality follows from Cauchy\textendash Schwarz inequality.
\end{proof}

\subsection{\label{subsec:Weingarten-calculus}Weingarten calculus}

In $\mathsection\ref{subsec:Representation-theory-of symmetric}$,
\ref{subsec:Representation-theory-of unitary } we stated that each
partition $\lambda\vdash m$ with $\ell(\lambda)\leq d$ induces two
different representations, $\rho_{\lambda}\in\mathrm{Irr}(\U_{d})$
and $\chi_{\lambda}\in\mathrm{Irr}(S_{m})$. There is a deeper connection
between $\rho_{\lambda}$ and $\chi_{\lambda}$ coming from the\emph{
}Schur\textendash Weyl duality: the space $\left(\C^{d}\right)^{\otimes m}$
carries a natural action of $\U_{d}\times S_{m}$, where $A\in\U_{d}$
acts diagonally $A.\left(v_{1}\otimes\cdots\otimes v_{m}\right)=Av_{1}\otimes\cdots\otimes Av_{m}$,
and $\sigma\in S_{m}$ acts by $\sigma.\left(v_{1}\otimes\cdots\otimes v_{m}\right)=v_{\sigma(1)}\otimes\cdots\otimes v_{\sigma(m)}$.
The Schur\textendash Weyl duality can be phrased as follows.
\begin{thm}[Schur\textendash Weyl duality, \cite{Wey39}]
\label{thm:Schur-Weyl duality}The space $\left(\C^{d}\right)^{\otimes m}$
is a multiplicity-free representation of $\U_{d}\times S_{m}$. The
decomposition of $\left(\C^{d}\right)^{\otimes m}$ into irreducible
components is given by 
\begin{equation}
\left(\C^{d}\right)^{\otimes m}=\bigoplus_{\lambda\vdash m,\ell(\lambda)\leq d}\rho_{\lambda}\otimes\chi_{\lambda}.\label{eq:Schur-Weyl duality}
\end{equation}
\end{thm}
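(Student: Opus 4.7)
The plan is to deduce the decomposition from the double commutant theorem applied to the mutually centralizing subalgebras of $\mathrm{End}(V)$ generated by the two actions, where $V := (\C^d)^{\otimes m}$. Let $A \subseteq \mathrm{End}(V)$ be the linear span of the operators $\{g^{\otimes m} : g \in \mathrm{GL}_d(\C)\}$ and $B \subseteq \mathrm{End}(V)$ the image of $\C[S_m]$. Both are semisimple: $B$ by Maschke's theorem, and $A$ because $V$ restricted to the compact subgroup $\U_d$ is a unitary and hence completely reducible representation. The two actions manifestly commute, so $A \subseteq B'$ and $B \subseteq A'$, with primes denoting commutants in $\mathrm{End}(V)$. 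Upgrading both inclusions to equalities produces, by the general theory of mutually centralizing semisimple subalgebras, a multiplicity-free decomposition $V = \bigoplus_i W_i \otimes U_i$, where $\{W_i\}$ are the inequivalent $A$-irreducibles appearing in $V$ and $\{U_i\}$ are pairwise inequivalent irreducible $B$-representations.

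The key step is the identification $A = B'$. Under the canonical isomorphism $\mathrm{End}(V) \cong \mathrm{End}(\C^d)^{\otimes m}$, the $S_m$-action on $\mathrm{End}(V)$ by conjugation corresponds to permuting tensor factors, so $B' = \Sym^m\bigl(\mathrm{End}(\C^d)\bigr)$. On the other hand, a standard polarization argument shows that the linear span of $\{X^{\otimes m} : X \in \mathrm{End}(\C^d)\}$ is exactly $\Sym^m\bigl(\mathrm{End}(\C^d)\bigr)$, and this span coincides with $A$ since the map $X \mapsto X^{\otimes m}$ is polynomial and $\mathrm{GL}_d(\C)$ is Zariski-dense in $\mathrm{End}(\C^d)$. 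Hence $A = B'$, and since $B$ is semisimple, the double commutant theorem automatically yields $B = A'$ as well.

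It remains to identify the summands with $(\rho_\lambda, \chi_\lambda)$ for partitions $\lambda \vdash m$ satisfying $\ell(\lambda) \leq d$. For this I would use Young symmetrizers: for each such $\lambda$ and each standard Young tableau $T$ of shape $\lambda$, the idempotent $e_T \in \C[S_m]$ acts on $V$, and a direct computation (applied to a basis tensor indexed by $T$) shows $e_T \cdot V \neq 0$ precisely when $\ell(\lambda) \leq d$, with $e_T \cdot V$ containing an explicit $\U_d$-highest weight vector of weight $\lambda$. Since $\C[S_m]\cdot e_T \cong \chi_\lambda$ as a left module, this pairs $\rho_\lambda$ with $\chi_\lambda$ in the decomposition, and the identity $\sum_{\lambda \vdash m,\, \ell(\lambda) \leq d} \dim\rho_\lambda \cdot \dim\chi_\lambda = d^m = \dim V$ (provable via the hook-length formula and (\ref{eq:formula for dimension})) confirms that all summands are accounted for.

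The main obstacle is the polarization step underlying $A = B'$: although it is standard linear algebra, it is the single non-formal ingredient and is what makes the theorem substantive. The identification of individual summands with $(\rho_\lambda,\chi_\lambda)$ via Young symmetrizers is routine but combinatorially intricate, and I would cite it from standard references rather than reproduce the tableau manipulations in detail.
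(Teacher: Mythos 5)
The paper does not prove this statement; it is quoted as a classical theorem with a citation to Weyl, so there is no internal argument to compare against, and your double-commutant proof is precisely the standard route: semisimplicity of both algebras, the polarization plus Zariski-density identification $A=B'$, the double commutant theorem giving $B=A'$ and a multiplicity-free decomposition, and the Young-symmetrizer computation showing $e_T\cdot V\neq0$ exactly when $\ell(\lambda)\leq d$, with the highest-weight vector pinning the $\U_d$-factor as $\rho_\lambda$ (via the restriction bijection $\mathrm{Irr}(\mathrm{GL}_d(\C))\rightarrow\mathrm{Irr}(\U_d)$ the paper records in $\mathsection$\ref{subsec:Representation-theory-of unitary }). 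The one soft spot is your closing check: the identity $\sum_{\lambda\vdash m,\,\ell(\lambda)\leq d}\rho_\lambda(1)\chi_\lambda(1)=d^{m}$ is not a formal consequence of the hook-length formula and (\ref{eq:formula for dimension}) — it is essentially the RSK correspondence — but your argument does not need it, since the double commutant theorem already guarantees that the summands are exactly those $\lambda$ with $e_T\cdot V\neq0$.
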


There are two special functions on $S_{m}$ which come from (\ref{eq:Schur-Weyl duality}).
Firstly, writing $\ell(\sigma)$ for the number of disjoint cycles
in $\sigma\in S_{m}$, the character of $\left(\C^{d}\right)^{\otimes m}$
as a representation of $S_{m}$ is the function $\sigma\mapsto d^{\ell(\sigma)}$. 

Recall we have an isomorphism of algebras $\C[S_{m}]\simeq\bigoplus_{\lambda\vdash m}\mathrm{End}(V_{\chi_{\lambda}})$,
where the multiplication in $\C[S_{m}]$ is the convolution operation
$f_{1}*f_{2}(y):=\sum_{x\in S_{m}}f(x)g(x^{-1}y)$. We denote by $\C_{d}[S_{m}]$
the subalgebra corresponding to $\bigoplus_{\lambda\vdash m,\ell(\lambda)\leq d}\mathrm{End}(V_{\chi_{\lambda}})$.
\begin{defn}[{\cite[Proposition 2.3]{CS06}}]
\label{def:Weingarten function}Let $d\in\N$. The \emph{Weingarten
function} $\Wg_{d}:S_{m}\rightarrow\C$ is the inverse of the function
$d^{\ell(\sigma)}$ in the ring $\C_{d}[S_{m}]$. It has the following
Fourier expansion: 
\begin{equation}
\Wg_{d}(\sigma)=\frac{1}{m!^{2}}\sum_{\lambda\vdash m,\ell(\lambda)\leq d}\frac{\chi_{\lambda}(1)^{2}}{\rho_{\lambda}(1)}\chi_{\lambda}(\sigma).\label{eq:Weingarten function}
\end{equation}
\end{defn}

\begin{rem}
Since in this paper we only consider $\Wg_{d'}(\sigma)$ for $d'=d$,
we write $\Wg$ instead of $\Wg_{d}$. 
\end{rem}

The \emph{Weingarten Calculus}, developed in \cite{Wein78,Col03,CS06}
utilizes the Schur\textendash Weyl duality to express integrals on
unitary groups as finite sums of Weingarten functions on symmetric
groups. One formulation is the following theorem by Collins and \'{S}niady:
\begin{thm}[{\cite[Corollary 2.4]{CS06}}]
\label{thm:Weingarten calculus}Let $(i_{1},\ldots,i_{m})$, $(j_{1},\ldots,j_{m})$,
$(i'_{1},\ldots,i'_{m})$, and $(j'_{1},\ldots,j'_{m})$ be tuples
of integers in $[d]$. Then:
\begin{align}
 & \mathbb{E}_{X\in\U_{d}}\left(X_{i_{1},j_{1}}\cdots X_{i_{m},j_{m}}\cdot\overline{X_{i'_{1},j'_{1}}\cdots X_{i'_{m},j'_{m}}}\right)\nonumber \\
= & \sum_{\sigma,\tau\in S_{m}}\delta_{i_{1},i'_{\sigma(1)}}\cdots\delta_{i_{m},i'_{\sigma(m)}}\delta_{j_{1},j'_{\tau(1)}}\cdots\delta_{j_{m},j'_{\tau(m)}}\cdot\Wg_{d}(\sigma^{-1}\tau).\label{eq:Weingarten calculus}
\end{align}
\end{thm}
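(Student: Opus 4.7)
The plan is to interpret the left-hand side as a matrix coefficient of an orthogonal projection, identify that projection via Schur\textendash Weyl duality, and then read off the answer from the definition of the Weingarten function as a convolution inverse.

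To begin, set $V=(\C^{d})^{\otimes m}$ and write $\mathbf{e}_{i}=e_{i_{1}}\otimes\cdots\otimes e_{i_{m}}$ for each multi-index $(i_{1},\ldots,i_{m})$, and analogously for $i',j,j'$. Since $X_{i_{k},j_{k}}=\langle Xe_{j_{k}},e_{i_{k}}\rangle$, the integrand equals $\langle X^{\otimes m}\mathbf{e}_{j},\mathbf{e}_{i}\rangle\cdot\overline{\langle X^{\otimes m}\mathbf{e}_{j'},\mathbf{e}_{i'}\rangle}$, so the expectation is
\[
\bigl\langle P(\mathbf{e}_{j}\otimes\overline{\mathbf{e}_{j'}}),\,\mathbf{e}_{i}\otimes\overline{\mathbf{e}_{i'}}\bigr\rangle,\qquad\text{where }P:=\mathbb{E}\bigl(X^{\otimes m}\otimes\overline{X}^{\otimes m}\bigr).
\]
By invariance of the Haar measure, $P$ is the orthogonal projection onto the $\U_{d}$-invariants of $V\otimes\overline{V}$. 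Using the standard isometry $V\otimes\overline{V}\cong\mathrm{End}(V)$, $v\otimes\overline{w}\mapsto|v\rangle\langle w|$ (equipping $\mathrm{End}(V)$ with the Hilbert\textendash Schmidt inner product), the $\U_{d}$-action becomes conjugation, so the invariants are exactly $\mathrm{End}_{\U_{d}}(V)$.

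The second step is to apply Schur\textendash Weyl duality (Theorem \ref{thm:Schur-Weyl duality}), which identifies $\mathrm{End}_{\U_{d}}(V)$ with the image $\mathcal{R}$ of the representation $\rho:\C[S_{m}]\to\mathrm{End}(V)$ by permutation of tensor factors, a subspace spanned by the operators $\{\rho(\sigma)\}_{\sigma\in S_{m}}$. A direct trace computation, using that a permutation fixes a basis tensor $\mathbf{e}_{k}$ iff $k$ is constant on every cycle of the permutation, yields
\[
\bigl\langle\rho(\sigma),\rho(\tau)\bigr\rangle_{\mathrm{HS}}=\tr\bigl(\rho(\sigma\tau^{-1})\bigr)=d^{\ell(\sigma\tau^{-1})}.
\]
Writing the orthogonal projection as $P(A)=\sum_{\sigma}c_{\sigma}(A)\rho(\sigma)$ and imposing $\langle P(A)-A,\rho(\tau)\rangle_{\mathrm{HS}}=0$ for every $\tau$ produces the linear system
\[
\sum_{\sigma}c_{\sigma}(A)\,d^{\ell(\sigma\tau^{-1})}=\tr\bigl(A\rho(\tau^{-1})\bigr)\qquad(\tau\in S_{m}),
\]
which is precisely a convolution equation for $c_{\sigma}(A)$ against the function $\sigma\mapsto d^{\ell(\sigma)}$ in the algebra $\C_{d}[S_{m}]$. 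By Definition \ref{def:Weingarten function}, $\Wg_{d}$ is the convolution inverse of this function, so $c_{\sigma}(A)=\sum_{\tau}\Wg_{d}(\sigma\tau^{-1})\tr(A\rho(\tau^{-1}))$.

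To conclude, I would substitute $A=|\mathbf{e}_{j}\rangle\langle\mathbf{e}_{j'}|$ and pair the result with $|\mathbf{e}_{i}\rangle\langle\mathbf{e}_{i'}|$ in the Hilbert\textendash Schmidt inner product. Both resulting traces expand into the Kronecker delta products appearing in the statement: $\tr(A\rho(\tau^{-1}))=\langle\rho(\tau)\mathbf{e}_{j'},\mathbf{e}_{j}\rangle=\prod_{k}\delta_{j_{k},j'_{\tau(k)}}$, and similarly on the $\sigma$-side. Because $\Wg_{d}$ is a class function (by its Fourier expansion in Definition \ref{def:Weingarten function}), and $\sigma\tau^{-1}$ is conjugate to $\sigma^{-1}\tau$, we have $\Wg_{d}(\sigma\tau^{-1})=\Wg_{d}(\sigma^{-1}\tau)$, matching the form in the statement. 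The one delicate point I would verify carefully is the regime $d<m$: there $\rho$ has a nonzero kernel and the operators $\rho(\sigma)$ are linearly dependent, so the coefficients $c_{\sigma}(A)$ above are not uniquely determined. However, the output $P(A)=\sum_{\sigma}c_{\sigma}(A)\rho(\sigma)$ is well-defined, and the Weingarten-based choice of coefficients is consistent precisely because $\Wg_{d}$ is defined as the inverse of $d^{\ell(\cdot)}$ in the quotient subalgebra $\C_{d}[S_{m}]$, which is exactly the algebra in which the convolution equation lives after passing to $\mathcal{R}$.
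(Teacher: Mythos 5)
The paper does not prove this theorem; it quotes it verbatim from \cite[Corollary 2.4]{CS06}, so there is no in-paper argument to compare against. Your proof is correct and is essentially the standard Collins--\'{S}niady argument: rewrite the left-hand side as a Hilbert--Schmidt matrix coefficient of the conditional expectation $P=\mathbb{E}\left(X^{\otimes m}\otimes\overline{X}^{\otimes m}\right)$, identify $P$ via Schur--Weyl duality with the orthogonal projection onto the commutant $\mathcal{R}=\rho(\C[S_m])$, and solve the resulting Gram system by convolution with $\Wg_d$, which is built to be the convolution inverse of $\sigma\mapsto d^{\ell(\sigma)}$ in $\C_d[S_m]$.

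Your closing caveat about $d<m$ is exactly the point one must check, and it can be tightened as follows. Both $d^{\ell(\cdot)}$ (the character of $(\C^d)^{\otimes m}$ as an $S_m$-module) and $\Wg_d$ lie in the ideal $\C_d[S_m]$, so $\Wg_d*d^{\ell(\cdot)}=e_d$, the central idempotent projecting onto $\C_d[S_m]$. Moreover the function $h(\tau)=\tr\left(A\,\rho(\tau^{-1})\right)$ is, up to a scalar, the image of $A$ under the adjoint $\rho^{*}:\mathrm{End}(V)\to\C[S_m]$, whose range is $(\ker\rho)^{\perp}=\C_d[S_m]$; hence $h\in\C_d[S_m]$ and $h*\Wg_d*d^{\ell(\cdot)}=h*e_d=h$. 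Therefore the Weingarten choice $c_\sigma=\sum_\tau\Wg_d(\sigma\tau^{-1})h(\tau)$ solves the (possibly degenerate) linear system $c*d^{\ell(\cdot)}=h$, which is all that is needed, even though for $d<m$ the coefficients $c_\sigma$ of the projection $P(A)$ in the spanning set $\{\rho(\sigma)\}$ are no longer unique.

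One tiny notational remark: you used the identity $\Wg_d(\sigma\tau^{-1})=\Wg_d(\sigma^{-1}\tau)$. This holds because $\Wg_d$ is a class function on $S_m$ and every element of $S_m$ is conjugate to its inverse, so $\Wg_d(\pi)=\Wg_d(\pi^{-1})$; combined with conjugation-invariance this gives the claimed equality. Worth spelling out since it is what reconciles your right-convolution formula with the form of the statement.
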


We will use a coordinate-free version of Theorem \ref{thm:Weingarten calculus}
which we proceed to state.
\begin{defn}
\label{def:symmetric partition and generalizaed Weingarten}Let $\Omega$
be a set. 
\begin{enumerate}
\item A \emph{symmetric partition} $\Phi$ of $\Omega$ is a partition $\Omega=\bigsqcup_{i=1}^{r}A_{i}\sqcup\bigsqcup_{i=1}^{r}B_{i}$,
where $\left|A_{i}\right|=\left|B_{i}\right|$. 
\item Given a symmetric partition $\Phi=(A_{1},\ldots,A_{r},B_{1},\ldots,B_{r})$,
let 
\[
S_{\Phi}=\left\{ \varSigma\in\Sym(\Omega):\varSigma(A_{i})=B_{i},\varSigma(B_{i})=A_{i}\right\} .
\]
\item If $\varSigma\in S_{\Phi}$, then $\varSigma^{2}(A_{i})=A_{i}$ and
we define $\widetilde{\Wg}(\varSigma^{2})=\prod_{i=1}^{r}\Wg(\varSigma^{2}|_{A_{i}})$.
\end{enumerate}
\end{defn}

\begin{prop}
\label{prop:Weingarten coordinate free}Let $\Phi=(A,B)$ be a symmetric
partition of $\Omega$ and let $F,H:\Omega\rightarrow[d]$. Then 
\[
\mathbb{E}_{X\in\U_{d}}\left(\prod_{x\in A}X_{F(x),H(x)}\prod_{y\in B}X_{F(y),H(y)}^{-1}\right)=\mathbb{E}\left(\prod_{x\in A}X_{F(x),H(x)}\prod_{y\in B}\overline{X_{H(y),F(y)}}\right)=\sum_{\Sigma\in S_{\Phi}:\,H=F\circ\Sigma}\widetilde{\Wg}(\Sigma^{2}).
\]
\end{prop}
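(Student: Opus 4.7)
The plan is to reduce the coordinate-free statement to the indexed form given in Theorem \ref{thm:Weingarten calculus}. The first equality is immediate from unitarity of $X$: since $X^{-1}=X^{*}$, we have $X^{-1}_{F(y),H(y)}=\overline{X_{H(y),F(y)}}$.

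For the second equality, fix any enumerations $A=\{a_{1},\ldots,a_{m}\}$ and $B=\{b_{1},\ldots,b_{m}\}$. These identify $\Sym(A)\cong\Sym(B)\cong S_{m}$, and they also identify $H_{\Phi}$ with $S_{m}\times S_{m}$ via the pair $(\Sigma|_{A},\Sigma|_{B})$. Setting $i_{k}=F(a_{k})$, $j_{k}=H(a_{k})$, $i'_{k}=H(b_{k})$, and $j'_{k}=F(b_{k})$, Theorem \ref{thm:Weingarten calculus} expresses the expectation as
\[
\sum_{\sigma,\tau\in S_{m}}\Biggl(\prod_{k=1}^{m}\delta_{F(a_{k}),H(b_{\sigma(k)})}\Biggr)\Biggl(\prod_{k=1}^{m}\delta_{H(a_{k}),F(b_{\tau(k)})}\Biggr)\Wg_{d}(\sigma^{-1}\tau).
\]

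To match each summand with the right-hand side, I would assign to a pair $(\sigma,\tau)$ the bijection $\Sigma\in H_{\Phi}$ defined by $\Sigma(a_{k})=b_{\tau(k)}$ and $\Sigma(b_{k})=a_{\sigma^{-1}(k)}$; this is manifestly a bijection between $S_{m}\times S_{m}$ and $H_{\Phi}$. A direct check shows that $H=F\circ\Sigma$ holds on $A$ if and only if $H(a_{k})=F(b_{\tau(k)})$ for every $k$, while it holds on $B$ if and only if $H(b_{\sigma(k)})=F(a_{k})$ for every $k$; together these recover precisely the two Kronecker products above. Finally $\Sigma^{2}(a_{k})=a_{\sigma^{-1}\tau(k)}$, so $\widetilde{\Wg}(\Sigma^{2})=\Wg_{d}(\Sigma^{2}|_{A})=\Wg_{d}(\sigma^{-1}\tau)$, and the two sums agree term by term.

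The only potential obstacle is bookkeeping: choosing the bijection $S_{m}\times S_{m}\to H_{\Phi}$ with the correct inversions so that $\Sigma^{2}|_{A}$ comes out as $\sigma^{-1}\tau$ and so that the delta conditions line up with $H=F\circ\Sigma$ on the two halves of $\Omega$. Once this convention is set, no combinatorial input beyond the coordinate Weingarten formula is needed; the proof is essentially a change of notation, and in particular the enumerations of $A$ and $B$ drop out of the final expression as they should.
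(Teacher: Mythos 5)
Your proof is correct and takes essentially the same route as the paper: both reduce to the coordinate formula of Theorem \ref{thm:Weingarten calculus} by choosing explicit index tuples $i_k=F(a_k)$, $j_k=H(a_k)$, $i'_k=H(b_k)$, $j'_k=F(b_k)$, identify $H_\Phi$ with $S_m\times S_m$ via $\Sigma(a_k)=b_{\tau(k)}$, $\Sigma(b_k)=a_{\sigma^{-1}(k)}$, and check that the delta constraints encode $H=F\circ\Sigma$ while $\Sigma^2|_A$ acts as $\sigma^{-1}\tau$. The only cosmetic difference is that the paper fixes the enumeration $B\cong\{-1,\ldots,-m\}$, whereas you keep an arbitrary enumeration of $B$; the bookkeeping and the final matching are the same.
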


\begin{proof}
Identify $A\cong\left\{ 1,\ldots,m\right\} $ and $B\cong\left\{ -1,\ldots,-m\right\} $
and let $\overrightarrow{i},\overrightarrow{j},\overrightarrow{i}',\overrightarrow{j}'\in[d]^{m}$
be
\[
i_{k}=F(k)\quad j_{k}=H(k)\quad i'_{k}=H(-k)\quad j'_{k}=F(-k).
\]
Then, by Theorem \ref{thm:Weingarten calculus},
\begin{align*}
\mathbb{E}_{X\in\mathrm{\U}_{d}}\left(\prod_{x\in A}X_{F(x),H(x)}\prod_{y\in B}X_{F(y),H(y)}^{-1}\right) & =\mathbb{E}_{X\in\mathrm{\U}_{d}}\left(X_{i_{1},j_{1}}\cdots X_{i_{m},j_{m}}\overline{X_{i'_{1},j'_{1}}\cdots X_{i'_{m},j'_{m}}}\right)\\
 & =\sum_{\sigma,\tau\in S_{m}}\delta_{i_{1},i'_{\sigma(1)}}\cdots\delta_{i_{m},i'_{\sigma(m)}}\cdot\delta_{j_{1},j'_{\tau(1)}}\cdots\delta_{j_{m},j'_{\tau(m)}}\cdot\Wg(\sigma^{-1}\tau).
\end{align*}
For $\sigma,\tau\in S_{m}$, let $\Sigma_{(\sigma,\tau)}\in\Sym(A\sqcup B)\cong\Sym\left(\left\{ -m,\ldots,-1,1,\ldots,m\right\} \right)$
be the permutation 
\[
\Sigma_{(\sigma,\tau)}(x)=\begin{cases}
-\tau(x) & x\in\left\{ 1,\ldots,m\right\} \\
\sigma^{-1}(-x) & x\in\left\{ -1,\ldots,-m\right\} .
\end{cases}
\]
The map $(\sigma,\tau)\mapsto\Sigma_{(\sigma,\tau)}$ is a bijection
$S_{m}^{2}\cong S_{\Phi}$ and the condition $\delta_{i_{1},i'_{\sigma(1)}}\cdots\delta_{i_{m},i'_{\sigma(m)}}\cdot\delta_{j_{1},j'_{\tau(1)}}\cdots\delta_{j_{m},j'_{\tau(m)}}=1$
is equivalent to $H=F\circ\varSigma_{(\sigma,\tau)}$. Finally, the
permutation $\left(\varSigma_{(\sigma,\tau)}\right)^{2}$ acts on
$A$ as $\sigma^{-1}\tau$, and the result follows.
\end{proof}
\begin{cor}
\label{cor:Weingarten coordinate free- multidimensional}Let $\Phi=(A_{1},\ldots,A_{r},B_{1},\ldots,B_{r})$
be a symmetric partition of $\Omega$ and let $F,H:\Omega\rightarrow[d]$.
Then 
\[
\mathbb{E}\left(\prod_{i=1}^{r}\left(\prod_{x\in A_{i}}(X_{i})_{F(x),H(x)}\prod_{y\in B_{i}}(X_{i}^{-1})_{F(y),H(y)}\right)\right)=\sum_{\varSigma\in S_{\Phi}:\,H=F\circ\Sigma}\widetilde{\Wg}(\varSigma^{2}).
\]
\end{cor}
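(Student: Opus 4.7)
The plan is to reduce to the single-unitary case (Proposition \ref{prop:Weingarten coordinate free}) by using the independence of $X_{1},\ldots,X_{r}$. Since the Haar measure on $\U_{d}^{r}$ is the product of $r$ copies of the Haar measure on $\U_{d}$, and since the $i$-th inner factor on the left-hand side depends only on $X_{i}$, the expectation factors as
\[
\prod_{i=1}^{r}\mathbb{E}_{X_{i}\in\U_{d}}\left(\prod_{x\in A_{i}}(X_{i})_{F(x),H(x)}\prod_{y\in B_{i}}(X_{i}^{-1})_{F(y),H(y)}\right).
\]

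Next, I would apply Proposition \ref{prop:Weingarten coordinate free} separately to each factor. Setting $\Omega_{i}=A_{i}\sqcup B_{i}$, viewing $\Phi_{i}=(A_{i},B_{i})$ as a (single-block) symmetric partition of $\Omega_{i}$, and writing $F_{i},H_{i}$ for the restrictions of $F,H$ to $\Omega_{i}$, the proposition yields
\[
\mathbb{E}_{X_{i}\in\U_{d}}\left(\prod_{x\in A_{i}}(X_{i})_{F(x),H(x)}\prod_{y\in B_{i}}(X_{i}^{-1})_{F(y),H(y)}\right)=\sum_{\Sigma_{i}\in H_{\Phi_{i}}:\,H_{i}=F_{i}\circ\Sigma_{i}}\Wg_{d}(\Sigma_{i}^{2}|_{A_{i}}),
\]
where $\widetilde{\Wg}$ collapses to a single $\Wg$ because $\Phi_{i}$ has only one $A$-block.

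Finally, I would assemble the product of the $r$ sums into a single sum over $H_{\Phi}$. The key observation is that any $\Sigma\in H_{\Phi}$ preserves each $\Omega_{i}$, since $\Sigma(A_{i})=B_{i}\subseteq\Omega_{i}$ and $\Sigma(B_{i})=A_{i}\subseteq\Omega_{i}$; hence $\Sigma\mapsto(\Sigma|_{\Omega_{1}},\ldots,\Sigma|_{\Omega_{r}})$ is a bijection $H_{\Phi}\cong\prod_{i=1}^{r}H_{\Phi_{i}}$. Under this bijection, the global compatibility condition $H=F\circ\Sigma$ is equivalent to the conjunction of the conditions $H_{i}=F_{i}\circ\Sigma_{i}$, and by the very definition in Definition \ref{def:symmetric partition and generalizaed Weingarten} one has $\widetilde{\Wg}(\Sigma^{2})=\prod_{i=1}^{r}\Wg(\Sigma_{i}^{2}|_{A_{i}})$. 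Expanding the product of sums therefore produces exactly the right-hand side of the claim. I do not anticipate any real obstacle: the argument is pure bookkeeping on top of independence and Proposition \ref{prop:Weingarten coordinate free}, and the only point to verify carefully is the bijection $H_{\Phi}\cong\prod_{i}H_{\Phi_{i}}$ together with the multiplicativity of $\widetilde{\Wg}$ across the blocks.
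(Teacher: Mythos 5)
Your argument is correct and is exactly the intended derivation that the paper leaves implicit: factor the expectation using the independence of $X_{1},\ldots,X_{r}$, apply Proposition \ref{prop:Weingarten coordinate free} blockwise, and recombine via the bijection $H_{\Phi}\cong\prod_{i}H_{\Phi_{i}}$ together with the multiplicativity of $\widetilde{\Wg}$ from Definition \ref{def:symmetric partition and generalizaed Weingarten}. The bookkeeping points you flag (each $\Sigma\in H_{\Phi}$ preserves $A_{i}\sqcup B_{i}$, and $H=F\circ\Sigma$ holds iff it holds on each block) are verified correctly, so there is nothing to add.
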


\section{\label{sec:The-Engel-word}The Engel word as a model case}
\begin{quotation}
\emph{``Those who run to long words are mainly the unskillful and
tasteless; they confuse pomposity with dignity, flaccidity with ease,
and bulk with force.''}\footnote{H.W Fowler, \emph{A Dictionary of Modern English Usage}, 1965.} 
\end{quotation}
In this section we prove the following simplified version of Theorem
\ref{thm:abs_value_of_trace} for the Engel word. We chose the Engel
word since it is short enough to make the proof easier to digest,
while at same time complicated enough so that the proof contains most
of the key ideas in the paper.
\begin{thm}
\label{thm:Main theorem for Engel word}Let $X,Y$ be independent
random variables with respect to the normalized Haar measure on $\U_{d}$.
For every $d\geq2m$, one has:
\[
\mathbb{E}\left(c_{m}\left([[X,Y],Y]\right)\right)<2^{17m}.
\]
\end{thm}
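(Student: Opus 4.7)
The plan is to execute the general strategy outlined in Section~\ref{sec:Introduction} on the specific word $w=XYX^{-1}YXY^{-1}X^{-1}Y^{-1}$, in which each of $X, X^{-1}, Y, Y^{-1}$ appears exactly twice. Starting from
\[
c_m(w)=\frac{(-1)^m}{m!}\sum_{\sigma\in S_m}\sgn(\sigma)\sum_{i_1,\ldots,i_m\in[d]}\prod_{k=1}^m w_{i_k,i_{\sigma(k)}},
\]
I fully expand each of the $m$ copies of $w$ along its eight letters, introducing $8m$ auxiliary indices and turning the integrand into a product of matrix entries of $X,X^{-1},Y,Y^{-1}$, with $2m$ entries of each type. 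Applying Corollary~\ref{cor:Weingarten coordinate free- multidimensional} to integrate $X$ and $Y$ independently yields $\mathbb{E}(c_m(w))$ as a finite sum over quadruples $(\sigma_X,\tau_X,\sigma_Y,\tau_Y)\in S_{2m}^{4}$ of a combinatorial factor $F$ (encoding index compatibility and the $\sgn$-weight) times $\Wg_d(\sigma_X^{-1}\tau_X)\,\Wg_d(\sigma_Y^{-1}\tau_Y)$.

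Next, I exploit a hidden symmetry. The group $H=S_m^4$ acts by independently permuting the $m$ replicas of each of the four slot-types (the two $X$-positions $1,5$, the two $X^{-1}$-positions $3,7$, the two $Y$-positions $2,4$, and the two $Y^{-1}$-positions $6,8$), and a direct check shows that $F$ is $H$-equivariant once the $\sgn$-twist coming from antisymmetrization is properly absorbed. Summing along each $H$-orbit therefore factors the contribution into a product of two averages of the form~(\ref{eq:twisted.average}) with $\ell_i=2$, one for $X$ and one for $Y$, each being an average of $\Wg_d$ over an $S_m^2$-coset in $S_{2m}$, signed by the restriction of $\sgn$. Substituting the character expansion~(\ref{eq:formula for Weingarten}) and applying Lemma~\ref{lem:primitive character bounds} with $H'=S_m^2\subseteq S_{2m}$ and $\lambda=\sgn$, only those $\rho_\nu$ with $\langle\chi_\nu|_{S_m^2},\sgn\rangle_{S_m^2}\neq 0$ survive, and Lemma~\ref{lem:description of littlewood richardson for Young subgroups} forces $\nu\vdash 2m$ to have at most two columns. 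For such $\nu$, Remark~\ref{rem:formula for dimension of rep} together with the hook-length formula~(\ref{eq:Hook length}) yields an explicit lower bound on $\rho_\nu(1)$ that is polynomial in $d$ of degree $2m$, providing a per-orbit upper bound of the form $C_0^m$ on the averaged Weingarten factor.

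It remains to count the $H$-orbits of $(\sigma_X,\tau_X,\sigma_Y,\tau_Y)$ on which $F$ is nonvanishing; call this set $Z$. Because the position pattern of the Engel word is rigid, the compatibility conditions imposed by the word on the auxiliary indices translate into a system of constraints whose orbit-count is readily bounded by $C_1^m$ for an explicit combinatorial constant. Multiplying $|Z|\le C_1^m$ by the per-orbit Weingarten bound $C_0^m$ and incorporating the $\frac{1}{m!}$ antisymmetrization factor yields the overall bound $(C_0 C_1)^m$; tracking the constants through each step is designed to produce exactly $2^{17m}$. The main obstacle is the orbit-counting step: since $H=S_m^4$ has already eliminated all factorials in $m$ but the Weingarten averages decay only like $C_0^m$, any looseness in estimating $|Z|$ would destroy the exponential bound. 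The Engel word is short enough that its eight-letter pattern allows this count to be carried out by hand via the explicit positions of the letters, which is the reason for singling it out as a model case ahead of the general combinatorial framework developed in later sections.
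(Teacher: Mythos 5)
Your high-level plan — expand in matrix entries, integrate out $X$ and $Y$ by Weingarten calculus, then use a hidden permutation symmetry to reduce to coset averages of $\Wg$ — is the same route the paper takes. But there are two concrete gaps in your sketch.

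\textbf{The symmetry group is $S_m^7$, not $S_m^4$.} Expanding the eight-letter word along each of the $m$ trace slots produces eight index-vectors $\overrightarrow{a},\overrightarrow{b},\overrightarrow{c},\overrightarrow{d},\overrightarrow{A},\overrightarrow{B},\overrightarrow{C},\overrightarrow{D}\in[d]^m$, one per letter boundary; the trace identifies the first and last of these (so $\overrightarrow{a}$ is constrained to be increasing and tied to the alternation permutation $\pi$), leaving seven ``free'' intermediate index-vectors. The natural symmetry group is $\prod_j \Sym(\overrightarrow{j}) \cong S_m^7$, acting by simultaneously permuting each intermediate index-vector and conjugating the $S_{2m}$-valued Weingarten permutations $(\sigma_1,\sigma_2,\tau_1,\tau_2)$ accordingly. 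Your $S_m^4$ — one factor per letter type $X, X^{-1}, Y, Y^{-1}$ — does not correspond to a symmetry of the sum, and more importantly it is far too small: to factor the orbit-average into a product of two expressions of the form (\ref{eq:twisted.average}), each involving a \emph{two-sided} average over $S_m^2 \times S_m^2$ inside $S_{2m}$, one needs eight independent $S_m$-averages in total. The paper gets these from $S_m^7$ together with the explicit sum over $\pi \in S_m$ (via the change of coordinates $\psi:S_m^8\to S_m^8$ in $\mathsection$\ref{sec:The-Engel-word}). With only $S_m^4$ you could at best average over one side, which kills the factorization you are relying on.

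\textbf{The quantitative claims $C_0^m$ and $C_1^m$ are not correct as stated.} The per-orbit Weingarten bound (Proposition~\ref{prop:estimates on a single L-orbit Engel}) is $\frac{1}{(2m)!^2\,m!^3}\binom{d}{2m}^{-2}$, and the size bound (Proposition~\ref{prop:upper bound on Z Engel}) is $|Z| \le m!^7\binom{2m}{m}^4\binom{d}{m}\binom{d+m-1}{m}^3$; neither of these is $O(C^m)$ for fixed $C$. In particular $|Z|$ grows like $m!^7$ times $d$-dependent binomials — your $Z$ also has to include the intermediate index-vectors (or absorb the sum over them into $F$, which then becomes $d$-dependent), so it cannot be bounded by a pure exponential in $m$. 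The point is that the factorial and $d$-dependent pieces from the two propositions \emph{cancel}: $m!^7/\bigl((2m)!^2 m!^3\bigr) = \binom{2m}{m}^{-2}$, and the remaining binomial ratio is bounded by $O(2^{cm})$ only after invoking $d\ge 2m$ and (\ref{eq:Striling on binomial coefficients}). Your sketch bypasses this cancellation, which is exactly where the hypothesis $d\ge 2m$ enters. Without tracking the $d$-dependence of both estimates, the claim ``$|Z|\le C_1^m$ and per-orbit bound $\le C_0^m$'' cannot be justified.
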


Let $w=[[x,y],y]=xyx^{-1}yxy^{-1}x^{-1}y^{-1}$ be the Engel word.
We would like to compute $\mathbb{E}\left(\tr\bigwedge\nolimits ^{m}w(X,Y)\right)$.
Denote $\mathcal{I}_{m,d}:=\{a_{1}<\ldots<a_{m}:a_{i}\in[d]\}$, and
note that
\begin{equation}
\tr\left(\bigwedge\nolimits ^{m}w(X,Y)\right)=\sum_{\overrightarrow{a}\in\mathcal{I}_{m,d}}\sum_{\pi\in S_{m}}(-1)^{\pi}w(X,Y)_{a_{1}a_{\pi(1)}}\cdots w(X,Y)_{a_{m}a_{\pi(m)}}.\label{eq:formula for trace of wedge}
\end{equation}
We have 
\begin{align}
w(X,Y)_{a_{i}a_{\pi(i)}} & =\sum_{b_{i},c_{i},d_{i},A_{i},B_{i},C_{i},D_{i}\in[d]}X_{a_{i},D_{i}}Y_{D_{i},c_{i}}X_{c_{i},A_{i}}^{-1}Y_{A_{i},b_{i}}X_{b_{i},C_{i}}Y_{C_{i},d_{i}}^{-1}X_{d_{i},B_{i}}^{-1}Y_{B_{i},a_{\pi(i)}}^{-1}\nonumber \\
 & =\sum_{b_{i},c_{i},d_{i},A_{i},B_{i},C_{i},D_{i}\in[d]}X_{a_{i},D_{i}}X_{b_{i},C_{i}}\overline{X_{A_{i},c_{i}}X_{B_{i},d_{i}}}Y_{A_{i},b_{i}}Y_{D_{i},c_{i}}\overline{Y_{a_{\pi(i)},B_{i}}Y_{d_{i},C_{i}}}.\label{eq:formula for w_ij}
\end{align}
The group $S_{m}$ acts on $[d]^{m}$ by $\sigma(\overrightarrow{v})_{i}=\overrightarrow{v}_{\sigma^{-1}(i)}$
for any $\sigma\in S_{m}$ and $\overrightarrow{v}\in[d]^{m}$. Similarly,
given $\overrightarrow{v},\overrightarrow{w}\in[d]^{m}$ and $\tau\in S_{2m}$,
we denote by $(\overrightarrow{v},\overrightarrow{w})$ the element
in $[d]^{2m}$ given by $(\overrightarrow{v},\overrightarrow{w})_{i}=\begin{cases}
\overrightarrow{v}_{i} & \text{if }i\leq m\\
\overrightarrow{w}_{i-m} & \text{if }m<i\leq2m
\end{cases}$, and denote by $\tau(\overrightarrow{v},\overrightarrow{w})_{i}=(\overrightarrow{v},\overrightarrow{w})_{\tau^{-1}(i)}$.
In particular, writing $X_{\overrightarrow{v},\overrightarrow{u}}:=\prod_{i=1}^{m}X_{v_{i},u_{i}}$
for $\overrightarrow{v},\overrightarrow{u}\in[d]^{m}$, we have:
\begin{equation}
\tr\left(\bigwedge\nolimits ^{m}w(X,Y)\right)=\sum_{\overrightarrow{a}\in\mathcal{I}_{m,d}}\sum_{\overrightarrow{b},\ldots,\overrightarrow{D}\in[d]^{m}}\sum_{\pi\in S_{m}}(-1)^{\pi}\left(X_{\overrightarrow{a},\overrightarrow{D}}X_{\overrightarrow{b},\overrightarrow{C}}\overline{X_{\overrightarrow{A},\overrightarrow{c}}}\overline{X_{\overrightarrow{B},\overrightarrow{d}}}\right)\left(Y_{\overrightarrow{A},\overrightarrow{b}}Y_{\overrightarrow{D},\overrightarrow{c}}\overline{Y_{\pi^{-1}(\overrightarrow{a}),\overrightarrow{B}}Y_{\overrightarrow{d},\overrightarrow{C}}}\right).\label{eq:refined formula for trace}
\end{equation}
We now rewrite the expected value of (\ref{eq:refined formula for trace})
using Weingarten calculus. For this, define: 
\[
S(\overrightarrow{a},\ldots,\overrightarrow{D}):=\left\{ (\sigma_{1},\sigma_{2},\tau_{1},\tau_{2})\in S_{2m}^{4}:\begin{array}{c}
(\overrightarrow{A},\overrightarrow{B})=\sigma_{1}(\overrightarrow{a},\overrightarrow{b}),\,\,\,(\overrightarrow{c},\overrightarrow{d})=\tau_{1}(\overrightarrow{D},\overrightarrow{C})\\
(\overrightarrow{a},\overrightarrow{d})=\sigma_{2}(\overrightarrow{A},\overrightarrow{D}),\,\,\,(\overrightarrow{B},\overrightarrow{C})=\tau_{2}(\overrightarrow{b},\overrightarrow{c})
\end{array}\right\} ,
\]
and
\begin{equation}
Z:=\left\{ (\overrightarrow{a},\ldots,\overrightarrow{D},\sigma_{1},\sigma_{2},\tau_{1},\tau_{2})\in\mathcal{I}_{m,d}\times[d]^{7m}\times S_{2m}^{4}:(\sigma_{1},\sigma_{2},\tau_{1},\tau_{2})\in S(\overrightarrow{a},\ldots,\overrightarrow{D})\right\} .\label{eq:def of Z}
\end{equation}

\begin{lem}
\label{lem:reduction using Weingarten}We have:
\begin{equation}
\mathbb{E}\left(\tr\bigwedge\nolimits ^{m}w(X,Y)\right)=\sum_{(\overrightarrow{a},\ldots,\overrightarrow{D},\sigma_{1},\sigma_{2},\tau_{1},\tau_{2})\in Z}\sum_{\pi\in S_{m}}(-1)^{\pi}\Wg(\sigma_{1}^{-1}\tau_{1})\Wg(\sigma_{2}^{-1}\mathrm{(\pi}\times\mathrm{Id})\tau_{2}).\label{eq:reduction using Weingarten}
\end{equation}
\end{lem}

\begin{proof}
Using Weingarten calculus, i.e., Theorem \ref{thm:Weingarten calculus},
and (\ref{eq:refined formula for trace}), 
\begin{align}
\mathbb{E}\left(\tr\bigwedge\nolimits ^{m}w(X,Y)\right) & =\sum_{\overrightarrow{a}\in\mathcal{I}_{m,d}}\sum_{\overrightarrow{b},\ldots,\overrightarrow{D}\in[d]^{m}}\sum_{\pi\in S_{m}}(-1)^{\pi}\sum_{\sigma_{1},\widetilde{\sigma}_{2},\tau_{1},\tau_{2}\in S_{2m}}\delta_{(\overrightarrow{a},\overrightarrow{b}),\sigma_{1}^{-1}(\overrightarrow{A},\overrightarrow{B})}\cdot\delta_{(\overrightarrow{D},\overrightarrow{C}),\tau_{1}^{-1}(\overrightarrow{c},\overrightarrow{d})}\Wg(\sigma_{1}^{-1}\tau_{1})\nonumber \\
 & \cdot\delta_{(\overrightarrow{A},\overrightarrow{D}),\widetilde{\sigma}_{2}^{-1}(\pi^{-1}(\overrightarrow{a}),\overrightarrow{d})}\cdot\delta_{(\overrightarrow{b},\overrightarrow{c}),\tau_{2}^{-1}(\overrightarrow{B},\overrightarrow{C})}\Wg(\widetilde{\sigma}_{2}^{-1}\tau_{2}).\label{eq:break the formula}
\end{align}
Applying the change of coordinate $\sigma_{2}:=\mathrm{(\pi}\times\mathrm{Id})\circ\widetilde{\sigma}_{2}$,
and observing that $\widetilde{\sigma}_{2}^{-1}(\pi^{-1}(\overrightarrow{a}),\overrightarrow{d})=\sigma_{2}^{-1}(\overrightarrow{a},\overrightarrow{d})$,
(\ref{eq:break the formula}) becomes:
\[
\mathbb{E}\left(\tr\bigwedge\nolimits ^{m}w(X,Y)\right)=\sum_{(\overrightarrow{a},\ldots,\overrightarrow{D},\sigma_{1},\sigma_{2},\tau_{1},\tau_{2})\in Z}\sum_{\pi\in S_{m}}(-1)^{\pi}\Wg(\sigma_{1}^{-1}\tau_{1})\cdot\Wg(\sigma_{2}^{-1}\mathrm{(\pi}\times\mathrm{Id})\tau_{2}).\qedhere
\]
\end{proof}
In order to bound (\ref{eq:reduction using Weingarten}), we consider
a natural action of $S_{m}^{7}$ on $Z$, and find a suitable change
of coordinates such that the average of the product of the Weingarten
functions in (\ref{eq:reduction using Weingarten}) over any $S_{m}^{7}$-orbit
is equal to a product of averages of individual Weingarten functions
over cosets (see (\ref{eq:summing over an L-orbit is nice})). We
then use Lemma \ref{lem:primitive character bounds} to estimate the
contribution in (\ref{eq:reduction using Weingarten}) of each $S_{m}^{7}$-orbit.
To conclude the estimates of (\ref{eq:reduction using Weingarten}),
we will further provide estimates for $\left|Z\right|$. 

We first describe the action of $S_{m}^{7}$. The element $(\pi_{b},\pi_{c},\ldots,\pi_{D})\in S_{m}^{7}$
acts on $(\overrightarrow{a},\ldots,\overrightarrow{D})$ by $(\overrightarrow{a},\pi_{b}(\overrightarrow{b}),\pi_{c}(\overrightarrow{c}),\ldots,\pi_{D}(\overrightarrow{D}))$
and it acts on $(\sigma_{1},\sigma_{2},\tau_{1},\tau_{2})$ by: 
\[
\sigma_{1}\mapsto(\pi_{A}\times\pi_{B})\circ\sigma_{1}\circ(\mathrm{Id}\times\pi_{b}^{-1})
\]
\[
\tau_{1}\mapsto(\pi_{c}\times\pi_{d})\circ\tau_{1}\circ(\pi_{D}^{-1}\times\pi_{C}^{-1})
\]
\[
\sigma_{2}\mapsto(\mathrm{Id}\times\pi_{d})\circ\sigma_{2}\circ(\pi_{A}^{-1}\times\pi_{D}^{-1})
\]
\[
\tau_{2}\mapsto(\pi_{B}\times\pi_{C})\circ\tau_{2}\circ(\pi_{b}^{-1}\times\pi_{c}^{-1}).
\]
This gives rise to an action of $S_{m}^{7}$ on $Z$. The action on
the input of the Weingarten functions becomes
\begin{equation}
\Wg((\pi_{D}^{-1}\times\pi_{C}^{-1}\pi_{b})\sigma_{1}^{-1}(\pi_{A}^{-1}\pi_{c}\times\pi_{B}^{-1}\pi_{d})\tau_{1})\text{ and }\Wg(\pi_{b}^{-1}\pi_{A}\times\pi_{c}^{-1}\pi_{D})\sigma_{2}^{-1}(\pi\pi_{B}\times\pi_{d}^{-1}\pi_{C})\tau_{2}),\label{eq:Wg_1}
\end{equation}
where we used the conjugacy invariance of $\Wg$ to move permutations
from right to left. Consider the bijection $\psi:S_{m}^{8}\rightarrow S_{m}^{8}$,
defined by $(x_{1},\ldots,x_{8})\mapsto(x_{1},x_{1}x_{2},\ldots,x_{1}x_{2}\cdots x_{8})$.
Under the change of coordinates $(\theta_{D},\theta_{c},\theta_{A},\theta_{b},\theta_{C},\theta_{d},\theta_{B},\theta):=\psi^{-1}(\pi_{D},\pi_{c},\pi_{A},\pi_{b},\pi_{C},\pi_{d},\pi_{B},\pi^{-1})$,
the summation of (\ref{eq:reduction using Weingarten}) over an $S_{m}^{7}$-orbit
splits into a product of two separate sums: 
\begin{align}
 & \sum_{(\pi_{D},\ldots,\pi)\in S_{m}^{8}}(-1)^{\pi}\Wg((\pi_{D}^{-1}\times\pi_{C}^{-1}\pi_{b})\sigma_{1}^{-1}(\pi_{A}^{-1}\pi_{c}\times\pi_{B}^{-1}\pi_{d})\tau_{1})\Wg(\pi_{b}^{-1}\pi_{A}\times\pi_{c}^{-1}\pi_{D})\sigma_{2}^{-1}(\mathrm{\pi}\pi_{B}\times\pi_{d}^{-1}\pi_{C})\tau_{2})\nonumber \\
= & \sum_{(\theta_{D},\ldots,\theta)\in S_{m}^{8}}(-1)^{\theta_{D}\cdots\theta}\Wg((\theta_{D}^{-1}\times\theta_{C}^{-1})\sigma_{1}^{-1}(\theta_{A}^{-1}\times\theta_{B}^{-1})\tau_{1})\Wg(\theta_{b}^{-1}\times\theta_{c}^{-1})\sigma_{2}^{-1}(\mathrm{\theta}^{-1}\times\theta_{d}^{-1})\tau_{2})\nonumber \\
= & \sum_{\eta_{1},\eta'_{1}\in S_{m}^{2}}(-1)^{\eta_{1}\eta'_{1}}\Wg(\eta_{1}\sigma_{1}^{-1}\eta'_{1}\tau_{1})\sum_{\eta_{2},\eta'_{2}\in S_{m}^{2}}(-1)^{\eta_{2}\eta'_{2}}\Wg(\eta_{2}\sigma_{2}^{-1}\eta'_{2}\tau_{2}).\label{eq:summing over an L-orbit is nice}
\end{align}
We can now use the Fourier expansion of $\Wg$ (\ref{eq:Weingarten function})
and the estimates in $\mathsection$\ref{subsec:Averaging-characters-over}
to bound the contribution of an $S_{m}^{7}$-orbit in $Z$ to (\ref{eq:reduction using Weingarten}):
\begin{prop}
\label{prop:estimates on a single L-orbit Engel}Let $\widetilde{v}:=(\widetilde{\overrightarrow{a}},\ldots,\widetilde{\overrightarrow{D}},\widetilde{\sigma}_{1},\widetilde{\sigma}_{2},\widetilde{\tau}_{1},\widetilde{\tau}_{2})\in Z$
and let $\mathcal{O}_{\widetilde{v}}:=S_{m}^{7}\widetilde{v}$ be
its $S_{m}^{7}$-orbit. Then, 
\begin{equation}
\left|\frac{1}{\left|\mathcal{O}_{\widetilde{v}}\right|}\sum_{(\overrightarrow{a},\ldots,\tau_{2})\in\mathcal{O}_{\widetilde{v}}}\sum_{\pi\in S_{m}}(-1)^{\pi}\Wg(\sigma_{1}^{-1}\tau_{1})\Wg(\sigma_{2}^{-1}\mathrm{(\pi}\times\mathrm{Id})\tau_{2})\right|\leq\frac{1}{(2m)!^{2}m!^{3}}\binom{d}{2m}^{-2}.\label{eq:Prop 3.3}
\end{equation}
\end{prop}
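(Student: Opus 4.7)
The plan is to use the factorization established in (\ref{eq:summing over an L-orbit is nice}) to reduce the orbit average to a product of two ``twisted averages'' of single Weingarten functions, and then bound each such average by combining the Fourier expansion (\ref{eq:Weingarten function}) with the character estimate of Lemma \ref{lem:primitive character bounds}. Since $|\mathcal{O}_{\widetilde v}| = m!^7/|\mathrm{Stab}(\widetilde v)|$, summing over $\mathcal{O}_{\widetilde v}$ is, up to the stabilizer, the same as summing over $S_m^7$; adjoining the $\pi$-sum gives the $S_m^8$-sum already analyzed in (\ref{eq:summing over an L-orbit is nice}), which after the change of variables $\psi$ factors as
\[
\frac{1}{|\mathcal{O}_{\widetilde v}|}\sum_{v\in\mathcal{O}_{\widetilde v}}\sum_{\pi\in S_m}(-1)^\pi\Wg(\sigma_1^{-1}\tau_1)\Wg((\pi\times\id)\sigma_2^{-1}\tau_2) = m!\cdot A_1\cdot A_2,
\]
where $A_j := \frac{1}{m!^4}\sum_{\eta,\eta'\in S_m^2}(-1)^{\eta\eta'}\Wg(\eta\sigma_j^{-1}\eta'\tau_j)$. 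It then suffices to prove $|A_j|\leq \frac{1}{(2m)!\,m!^2}\binom{d}{2m}^{-1}$ for each $j$.

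I would then insert the Fourier expansion (\ref{eq:Weingarten function}) into $A_j$ and control the resulting character sum $\frac{1}{m!^4}\sum_{\eta,\eta'\in S_m^2}(-1)^{\eta\eta'}\chi_\lambda(\eta\sigma^{-1}\eta'\tau)$ by cyclically rearranging (since $\chi_\lambda$ is a class function) and applying Lemma \ref{lem:primitive character bounds} in $\eta$, with $H=S_m^2\hookrightarrow S_{2m}$ and one-dimensional character $\sgn$, then triangle-inequality in $\eta'$. This produces the upper bound $\langle\chi_\lambda|_{S_m^2},\sgn\rangle_{S_m^2}$, which by Frobenius reciprocity equals the multiplicity of $\chi_\lambda$ in $\mathrm{Ind}_{S_m^2}^{S_{2m}}\sgn$, and by Lemma \ref{lem:description of littlewood richardson for Young subgroups} this multiplicity vanishes unless $\lambda\vdash 2m$ has at most two columns.

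The main technical step, where I expect to spend the most care, is a uniform pointwise bound $\chi_\lambda(1)/\rho_\lambda(1)\leq\binom{d}{2m}^{-1}$ for every two-column $\lambda$. Writing $\lambda=(2^a,1^b)$ with $2a+b=2m$, Remark \ref{rem:formula for dimension of rep} yields
\[
\frac{\chi_\lambda(1)}{\rho_\lambda(1)} = \frac{(2m)!\,(d-a-b)!\,(d+1-a)!}{d!\,(d+1)!},
\]
so the desired inequality reduces to the termwise comparison $(d+1)d\cdots(d-a+2)\geq (d-a-b)(d-a-b-1)\cdots(d-2a-b+1)$ between two products of $a$ consecutive integers, which holds automatically for $d\geq 2m$. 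Plugging this pointwise bound into the sum over $\lambda$ and using
\[
\sum_\lambda \chi_\lambda(1)\langle\chi_\lambda|_{S_m^2},\sgn\rangle_{S_m^2} = \dim\mathrm{Ind}_{S_m^2}^{S_{2m}}\sgn = \binom{2m}{m}
\]
yields $|A_j|\leq \binom{2m}{m}\binom{d}{2m}^{-1}/(2m)!^2 = \frac{1}{(2m)!\,m!^2}\binom{d}{2m}^{-1}$, and multiplying the two factors together with the extra $m!$ gives the bound stated in (\ref{eq:Prop 3.3}).
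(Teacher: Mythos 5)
Your proposal is correct and follows essentially the same path as the paper's proof: orbit-stabilizer plus the factorization (\ref{eq:summing over an L-orbit is nice}), then the Fourier expansion of $\Wg$, Lemma \ref{lem:primitive character bounds} applied over the Young subgroup $S_m^2$ with the sign character (followed by a triangle inequality in the second variable), reduction to two-column partitions via Lemma \ref{lem:description of littlewood richardson for Young subgroups}, and the content-product estimate. The only cosmetic difference is that you package the content-product inequality as the explicit pointwise bound $\chi_\lambda(1)/\rho_\lambda(1)\leq\binom{d}{2m}^{-1}$ for $\lambda=(2^a,1^b)$ (verified by the same termwise comparison the paper uses when it asserts $\prod_{(i,j)\in\lambda}(d+j-i)\geq d\cdots(d-2m+1)$), and you carry the multiplicity $\langle\chi_\lambda|_{S_m^2},\sgn\rangle$ through to the end and resum via $\dim\mathrm{Ind}_{S_m^2}^{S_{2m}}\sgn=\binom{2m}{m}$ rather than invoking the twisted-Gelfand-pair fact up front; these are the same computation.
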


\begin{proof}
By the Orbit-Stabilizer Theorem, the LHS of (\ref{eq:Prop 3.3}) is
the same as summing over all $(\pi_{D},\ldots,\pi_{B})\in S_{m}^{7}$
and dividing by $m!^{7}$. By (\ref{eq:summing over an L-orbit is nice}),
the LHS of (\ref{eq:Prop 3.3}) is equal to 
\[
\frac{1}{m!^{7}}\left|\sum_{\eta_{1},\eta'_{1}\in S_{m}^{2}}(-1)^{\eta_{1}\eta'_{1}}\Wg(\eta_{1}\widetilde{\sigma}_{1}^{-1}\eta'_{1}\widetilde{\tau}_{1})\sum_{\eta_{2},\eta'_{2}\in S_{m}^{2}}(-1)^{\eta_{2}\eta'_{2}}\Wg(\eta_{2}\widetilde{\sigma}_{2}^{-1}\eta'_{2}\widetilde{\tau}_{2})\right|.
\]
Note that $(S_{2m},S_{m}\times S_{m})$ is a $\sgn$-twisted Gelfand
pair, that is, the representation $\mathrm{Ind}_{S_{m}^{2}}^{S_{2m}}\sgn$
is multiplicity-free. By Frobenius reciprocity, each irreducible subrepresentation
$(V_{\lambda},\pi_{\lambda})$ of $\mathrm{Ind}_{S_{m}^{2}}^{S_{2m}}\sgn$
has a unique $(S_{m}^{2},\sgn)$-invariant unit vector, so $\left\langle \chi_{\lambda}|_{S_{m}^{2}},\sgn\right\rangle _{S_{m}^{2}}=1$.
By Lemma \ref{lem:primitive character bounds}, for each $\sigma\in S_{2m}$,
we have
\begin{equation}
\left|\sum_{h\in S_{m}^{2}}(-1)^{h}\chi_{\lambda}(h\sigma)\right|\leq m!^{2}\left\langle \chi_{\lambda}|_{S_{m}^{2}},\sgn\right\rangle _{S_{m}^{2}}=\begin{cases}
m!^{2} & \text{if }\pi_{\lambda}\hookrightarrow\mathrm{Ind}_{S_{m}^{2}}^{S_{2m}}\sgn\\
0 & \text{otherwise.}
\end{cases}.\label{eq:averaging character over coset-Engel}
\end{equation}
By Lemma \ref{lem:description of littlewood richardson for Young subgroups}
it follows that $\pi_{\lambda}\hookrightarrow\mathrm{Ind}_{S_{m}^{2}}^{S_{2m}}\sgn$
if and only if the young diagram of $\lambda\vdash2m$ has at most
two columns. Combining with (\ref{eq:averaging character over coset-Engel}),
we have:
\begin{align}
 & \left|\sum_{\eta_{1},\eta'_{1}\in S_{m}^{2}}(-1)^{\eta_{1}\eta'_{1}}\chi_{\lambda}(\eta_{1}\widetilde{\sigma}_{1}^{-1}\eta'_{1}\widetilde{\tau}_{1})\right|=\left|\sum_{\eta_{1}'\in S_{m}^{2}}(-1)^{\eta'_{1}}\sum_{\eta_{1}\in S_{m}^{2}}(-1)^{\eta_{1}}\chi_{\lambda}(\eta_{1}\widetilde{\sigma}_{1}^{-1}\eta'_{1}\widetilde{\tau}_{1})\right|\label{eq:more averaging}\\
\leq & \sum_{\eta_{1}'\in S_{m}^{2}}\left|\sum_{\eta_{1}\in S_{m}^{2}}(-1)^{\eta_{1}}\chi_{\lambda}(\eta_{1}\widetilde{\sigma}_{1}^{-1}\eta'_{1}\widetilde{\tau}_{1})\right|\leq\begin{cases}
m!^{4} & \text{if }\lambda\vdash2m,\,\lambda\text{ has}\leq2\text{ columns},\\
0 & \text{otherwise.}
\end{cases}\nonumber 
\end{align}
By (\ref{eq:Weingarten function}), (\ref{eq:more averaging}), (\ref{eq:formula for dimension}),
and by our assumption that $d\geq2m$, we have
\begin{align*}
 & \left|\sum_{\eta_{1},\eta'_{1}\in S_{m}^{2}}(-1)^{\eta_{1}\eta'_{1}}\Wg(\eta_{1}\widetilde{\sigma}_{1}^{-1}\eta'_{1}\widetilde{\tau}_{1})\right|=\frac{1}{(2m)!^{2}}\left|\sum_{\lambda\vdash2m}\frac{\chi_{\lambda}(1)^{2}}{\rho_{\lambda}(1)}\sum_{\eta_{1},\eta'_{1}\in S_{m}^{2}}(-1)^{\eta_{1}\eta'_{1}}\chi_{\lambda}(\eta_{1}\widetilde{\sigma}_{1}^{-1}\eta'_{1}\widetilde{\tau}_{1})\right|\\
\leq & \frac{m!^{4}}{(2m)!^{2}}\sum_{\lambda\vdash2m,\,\lambda\text{ has }\leq2\text{ columns}}\frac{\chi_{\lambda}(1)^{2}}{\rho_{\lambda}(1)}=\frac{m!^{4}}{(2m)!}\sum_{\lambda\vdash2m,\,\lambda\text{ has }\leq2\text{ columns}}\frac{\chi_{\lambda}(1)}{\prod_{(i,j)\in\lambda}(d+j-i)}\\
\leq & \frac{m!^{4}}{(2m)!}\cdot\frac{1}{d\cdot\cdots\cdot(d-2m+1)}\sum_{\lambda\vdash2m,\,\lambda\text{ has }\leq2\text{ columns}}\chi_{\lambda}(1)=\frac{m!^{4}}{(2m)!}\frac{\dim\mathrm{Ind}_{S_{m}^{2}}^{S_{2m}}\sgn}{d\cdot\cdots\cdot(d-2m+1)}=\frac{m!^{2}}{(2m)!}\binom{d}{2m}^{-1}.
\end{align*}
This concludes the proposition. 
\end{proof}
We now turn to the last ingredient in the proof of Theorem \ref{thm:Main theorem for Engel word}. 
\begin{defn}
\label{def:shape of a function}Let $f:S\rightarrow[d]$ be a function
on a set $S$. We define the \emph{shape} $\nu_{f}:[d]\rightarrow\N$
of $f$ as 
\[
\nu_{f}=(\nu_{f,1},\ldots,\nu_{f,d}):=(\left|f^{-1}(1)\right|,\ldots,\left|f^{-1}(d)\right|),
\]
and denote $\nu_{f}!:=\prod_{u=1}^{d}\nu_{f,u}$.
\end{defn}

\begin{prop}
\label{prop:upper bound on Z Engel}Let $Z$ be as in (\ref{eq:def of Z}).
Then:
\[
\left|Z\right|\leq m!^{7}\binom{2m}{m}^{4}\binom{d}{m}\binom{d+m-1}{m}^{3}.
\]
\end{prop}

\begin{proof}
We need to count all the possible tuples $(\overrightarrow{a},\ldots,\overrightarrow{D},\sigma_{1},\sigma_{2},\tau_{1},\tau_{2})$
in $Z$. Suppose we have already fixed $\overrightarrow{a}$ and the
shapes $\nu_{\overrightarrow{b}},\text{\ensuremath{\nu_{\overrightarrow{c}}} and \ensuremath{\nu_{\overrightarrow{d}}}}$
of $\overrightarrow{b},\overrightarrow{c}$ and $\overrightarrow{d}$,
where $\overrightarrow{b},\overrightarrow{c},\overrightarrow{d}$
are considered as a functions $[m]\rightarrow[d]$. Given this data:
\begin{enumerate}
\item There are $\frac{m!^{3}}{\nu_{\overrightarrow{b}}!\nu_{\overrightarrow{c}}!\nu_{\overrightarrow{d}}!}$
options for $\overrightarrow{b},\overrightarrow{c},\overrightarrow{d}$
with the above shapes. 
\item There are $(2m)!^{2}$ options for $\sigma_{2}$ and $\tau_{2}$.
\item There are at most $\binom{2m}{m}^{2}$ options for choosing $\tau_{1}^{-1}([m])$
and $\sigma_{1}([m])$, as subsets of $[2m]$. Note that we count
both valid and invalid options.
\item After fixing the subsets $\tau_{1}^{-1}([m])$ and $\sigma_{1}([m])$,
there are at most $\nu_{\overrightarrow{c}}!\nu_{\overrightarrow{d}}!$
options for $\tau_{1}$ and $\nu_{\overrightarrow{b}}!$ options for
$\sigma_{1}$. 
\end{enumerate}
Summarizing the above items, we get there are at most $\frac{m!^{3}(2m)!^{2}\nu_{\overrightarrow{b}}!\nu_{\overrightarrow{c}}!\nu_{\overrightarrow{d}}!}{\nu_{\overrightarrow{b}}!\nu_{\overrightarrow{c}}!\nu_{\overrightarrow{d}}!}\binom{2m}{m}^{2}=m!^{7}\binom{2m}{m}^{4}$
options for $(\overrightarrow{a},\ldots,\overrightarrow{D},\sigma_{1},\sigma_{2},\tau_{1},\tau_{2})\in Z$
with the initial data $\overrightarrow{a},\nu_{\overrightarrow{b}},\text{\ensuremath{\nu_{\overrightarrow{c}}}},\ensuremath{\nu_{\overrightarrow{d}}}$.
Note that there are $\binom{d}{m}$ possible options for $\overrightarrow{a}$,
and $\binom{d+m-1}{m}^{3}$ options for $\nu_{\overrightarrow{b}},\text{\ensuremath{\nu_{\overrightarrow{c}}}},\ensuremath{\nu_{\overrightarrow{d}}}$.
This gives the desired upper bound.
\end{proof}
We can now finish the proof of Theorem \ref{thm:Main theorem for Engel word}. 
\begin{proof}[Proof of Theorem \ref{thm:Main theorem for Engel word}]
Note that for every $k\geq1$ and $n\geq k$ we have
\begin{equation}
\left(\frac{n}{k}\right)^{k}\leq\prod_{j=0}^{k-1}\left(\frac{n-j}{k-j}\right)=\binom{n}{k}\leq\frac{n^{k}}{k!}\leq\left(\frac{n}{k}\right)^{k}e^{k},\label{eq:Striling on binomial coefficients}
\end{equation}
where the rightmost inequality follows from Stirling's approximation.
By Lemma \ref{lem:reduction using Weingarten} and by Propositions
\ref{prop:estimates on a single L-orbit Engel} and \ref{prop:upper bound on Z Engel},
\begin{align}
\left|\mathbb{E}\left(\tr\bigwedge\nolimits ^{m}w(X,Y)\right)\right| & =\left|Z\right|\cdot\left|\frac{1}{\left|Z\right|}\sum_{(\overrightarrow{a},\ldots,\overrightarrow{D},\sigma_{1},\sigma_{2},\tau_{1},\tau_{2})\in Z}\sum_{\pi\in S_{m}}(-1)^{\pi}\Wg(\sigma_{1}^{-1}\tau_{1})\cdot\Wg(\sigma_{2}^{-1}\mathrm{(\pi}\times\mathrm{Id})\tau_{2})\right|\nonumber \\
 & \leq\left|Z\right|\cdot\frac{1}{(2m)!^{2}m!^{3}}\binom{d}{2m}^{-2}\leq\binom{2m}{m}^{2}\binom{d}{m}\binom{d+m}{m}^{3}\binom{d}{2m}^{-2}.\label{eq:auxilary inequality}
\end{align}
By (\ref{eq:auxilary inequality}), (\ref{eq:Striling on binomial coefficients}),
by the inequality $\binom{2m}{m}\leq2^{2m}$, and by our assumption
that $d\geq2m$, 
\[
\left|\mathbb{E}\left(\tr\bigwedge\nolimits ^{m}w(X,Y)\right)\right|\leq\frac{2^{4m}e^{4m}\left(\frac{d}{m}\right)^{m}\left(\frac{d+m}{m}\right)^{3m}}{\left(\frac{d}{2m}\right)^{4m}}\leq\frac{2^{7m}e^{4m}\left(\frac{d}{m}\right)^{4m}}{\left(\frac{d}{2m}\right)^{4m}}\leq2^{11m}e^{4m}\leq2^{17m}.\qedhere
\]
\end{proof}
\begin{rem}
\label{rem:generalization to arbitrary words} The current proof of
Proposition \ref{prop:upper bound on Z Engel} depends on the special
structure of the Engel word. One can give a slightly more complicated
argument, which can be easily generalized for every word $w$ (this
is done in $\mathsection$\ref{sec:Estimates-on}). Here are the main
ideas of this alternative argument.

We encode the expression 
\begin{equation}
X_{a,D}Y_{D,c}X_{c,A}^{-1}Y_{A,b}X_{b,C}Y_{C,d}^{-1}X_{d,B}^{-1}Y_{B,a}^{-1}\label{eq:encoding the Engel}
\end{equation}
from (\ref{eq:formula for w_ij}), graphically, by the $4\times4$
matrix
\begin{equation}
\left(\begin{array}{cccc}
\cdot & C & D & \cdot\\
c & \cdot & \cdot & b\\
d & \cdot & \cdot & a\\
\cdot & B & A & \cdot
\end{array}\right),\label{eq:matrix}
\end{equation}
which is constructed as follows. The rows and columns are indexed
by $x,y,x^{-1},y^{-1}$. We order the rows by $x<y<y^{-1}<x^{-1}$
and order the columns by $x^{-1}<y^{-1}<y<x$. To find the $(x,y^{-1})$-entry
of this matrix (i.e.~the $(1,2)$-entry), we look for the subword
$XY^{-1}$ in (\ref{eq:encoding the Engel}) and record the letter
of the common index, which is $C$. All other entries are determined
in similar fashion. Note that we do not have elements in the main
diagonal since $w$ is cyclically reduced. 

We denote $\eta_{1}=\tau_{1}$, $\eta_{2}=\tau_{2}$, $\eta_{3}=\sigma_{2}^{-1}$,
$\eta_{4}=\sigma_{1}^{-1}$. Note that $\eta_{i}$ sends the $i$th
row of (\ref{eq:matrix}) into a permuted copy of its $i$th column.
The alternative counting argument in Proposition \ref{prop:upper bound on Z Engel}
goes as follows. We fix the upper triangular part, i.e., $\overrightarrow{C},\overrightarrow{D},\overrightarrow{a},\overrightarrow{b}$
(instead of $\overrightarrow{a},\overrightarrow{b},\overrightarrow{c},\overrightarrow{d}$
in the proof above). We then choose $\eta_{1}$ (with $2m!$ options),
which gives us $\overrightarrow{c},\overrightarrow{d}$ and, in particular,
reveals the second row. Next, we choose all possible $\eta_{2}:(\overrightarrow{b},\overrightarrow{c})\rightarrow(\overrightarrow{B},\overrightarrow{C})$,
taking into consideration the fact that $\overrightarrow{C}$ is already
known. We then proceed to the next row and guess $\eta_{3}$, taking
into consideration that we already know $\overrightarrow{D}$. At
this point, the vectors $\overrightarrow{a},\overrightarrow{b},\ldots,\overrightarrow{C},\overrightarrow{D}$
and the permutations $\eta_{1},\eta_{2},\eta_{3}$ are known, and
the number of options for $\eta_{4}$ is determined by the shapes
of $\overrightarrow{a},\overrightarrow{b}$. This argument will be
generalized in $\mathsection$\ref{sec:Estimates-on} for arbitrary
words, where, instead of a $4\times4$ matrix, we will have a $2r\times2r$
matrix and, each time we choose $\eta_{1},\ldots,\eta_{k}$, the $k+1$-st
row is revealed, allowing us to proceed by induction. 
\end{rem}

\section{\label{sec:Reduction-of-Theorem}Rewriting Theorem \ref{thm:abs_value_of_trace}
using Weingarten calculus}

In this section, we rewrite the expression $\mathbb{E}\left(\left|\tr\bigwedge\nolimits ^{m}w(X_{1},\ldots,X_{r})\right|^{2}\right)$
of Theorem \ref{thm:abs_value_of_trace} as a finite sum of Weingarten
functions. 

Let $\ell,m,d,w$ be as in Theorem \ref{thm:abs_value_of_trace}.
We may assume that $w$ is \emph{cyclically reduced}, i.e., it does
not contain a subword of the form $x_{j}x_{j}^{-1}$ and the first
and last letters of $w$ are not inverse of each other. For $u\in[\ell]$,
let 
\[
w(u)=\begin{cases}
a & \text{if the \ensuremath{u^{\mathrm{th}}} letter of \ensuremath{w} is \ensuremath{x_{a}}}\\
-a & \text{if the \ensuremath{u^{\mathrm{th}}} letter of \ensuremath{w} is \ensuremath{x_{a}^{-1}}}
\end{cases}.
\]
If we denote $x_{-a}=x_{a}^{-1}$, then $w=\prod_{u}x_{w(u)}$. We
write $w^{-1}$ for the inverse word,
\begin{equation}
w^{-1}:=x_{-w(\ell)}x_{-w(\ell-1)}\cdots x_{-1}.\label{eq:inverse word}
\end{equation}
We start by noting that
\begin{align}
\mathbb{E}\left(\left|\tr\left(\bigwedge\nolimits ^{m}w(X_{1},\ldots,X_{r})\right)\right|^{2}\right) & =\mathbb{E}\left(\tr\left(\bigwedge\nolimits ^{m}w(X_{1},\ldots,X_{r})\right)\cdot\overline{\tr\left(\bigwedge\nolimits ^{m}w(X_{1},\ldots,X_{r})\right)}\right)\label{eq:expected value of square of trace}\\
 & =\mathbb{E}\left(\tr\left(\bigwedge\nolimits ^{m}w(X_{1},\ldots,X_{r})\right)\cdot\tr\left(\bigwedge\nolimits ^{m}w^{-1}(X_{1},\ldots,X_{r})\right)\right).\nonumber 
\end{align}
Define $\widetilde{T}\in\Sym([\ell]\times[m])$ by 
\begin{equation}
\widetilde{T}(u,k)=\begin{cases}
(u+1,k) & u\neq\ell\\
(1,k) & u=\ell
\end{cases}.\label{eq:T_tilde}
\end{equation}
Recall that $\mathcal{I}_{m,d}=\{a_{1}<\ldots<a_{m}:a_{i}\in[d]\}$.
We have 
\begin{align}
\tr\left(\bigwedge\nolimits ^{m}w(X_{1},\ldots,X_{r})\right) & =\sum_{\overrightarrow{a}\in\mathcal{I}_{m,d}}\sum_{\pi\in S_{m}}(-1)^{\pi}\prod_{k=1}^{m}w(X_{1},\ldots,X_{r})_{a_{k},a_{\pi(k)}}\nonumber \\
 & =\sum_{\overrightarrow{a}\in\mathcal{I}_{m,d}}\sum_{\pi\in S_{m}}(-1)^{\pi}\prod_{k=1}^{m}\sum_{\substack{f_{k}:[\ell+1]\rightarrow[d]\\
f_{k}(1)=a_{k},f_{k}(\ell+1)=a_{\pi(k)}
}
}\prod_{u=1}^{\ell}\left(X_{w(u)}\right)_{f_{k}(u),f_{k}(u+1)}\nonumber \\
 & =\sum_{\overrightarrow{a}\in\mathcal{I}_{m,d}}\sum_{\pi\in S_{m}}(-1)^{\pi}\sum_{\substack{f:[\ell+1]\times[m]\rightarrow[d]\\
f(1,k')=a_{k'},f(\ell+1,k')=a_{\pi(k')},\forall k'
}
}\prod_{(u,k)\in[\ell]\times[m]}\left(X_{w(u)}\right)_{f(u,k),f(u+1,k)}\nonumber \\
 & =\sum_{\pi\in S_{m}}(-1)^{\pi}\sum_{\substack{f:[\ell+1]\times[m]\rightarrow[d]\\
f(\ell+1,k')=f(1,\pi(k')),\forall k'\\
f(1,-)\text{ increasing}
}
}\prod_{(u,k)\in[\ell]\times[m]}\left(X_{w(u)}\right)_{f(u,k),f(u+1,k)}\nonumber \\
 & =\sum_{\pi\in\Sym(\left\{ \ell\right\} \times[m])}(-1)^{\pi}\sum_{\substack{F:[\ell]\times[m]\rightarrow[d]\\
F(1,-)\text{ increasing}
}
}\prod_{(u,k)\in[\ell]\times[m]}\left(X_{w(u)}\right)_{F(u,k),F(\widetilde{T}\pi(u,k))},\label{eq:reformulation of trace}
\end{align}
where in the last equality we use the natural embedding $\Sym(\left\{ \ell\right\} \times[m])\hookrightarrow\Sym([\ell]\times[m])$
obtained by acting trivially on $[\ell-1]\times[m]$. Applying this
to $w^{-1}$, we get 
\begin{equation}
\overline{\tr\bigwedge\nolimits ^{m}w(X_{1},\ldots,X_{r})}=\sum_{\pi'\in\Sym(\left\{ \ell\right\} \times[m])}(-1)^{\pi'}\sum_{\substack{F':[\ell]\times[m]\rightarrow[d]\\
F'(1,-)\text{ increasing}
}
}\prod_{(u,k)\in[\ell]\times[m]}\left(X_{w^{-1}(u)}\right)_{F'(u,k),F'(\widetilde{T}\pi'(u,k))}.\label{eq:simplified trace for w^-1}
\end{equation}
Set $\Omega=[2]\times[\ell]\times[m]$, $\Omega_{s,u}=\left\{ s\right\} \times\left\{ u\right\} \times[m]$,
and for $\gamma\in\Omega$, define
\[
\widetilde{w}(\gamma)=\begin{cases}
w(u) & \gamma=(1,u,k)\\
w^{-1}(u) & \gamma=(2,u,k)
\end{cases}.
\]
Define $T\in\Sym(\Omega)$ by 
\begin{equation}
T(s,u,k):=(s,\widetilde{T}(u,k)).\label{eq:def of T}
\end{equation}
By combining (\ref{eq:reformulation of trace}) and (\ref{eq:simplified trace for w^-1}),
we get
\begin{equation}
\left|\tr\bigwedge\nolimits ^{m}w(X_{1},\ldots,X_{r})\right|^{2}=\sum_{(\pi,\pi')\in\prod_{s=1}^{2}\Sym(\Omega_{s,\ell})}(-1)^{\pi\pi'}\sum_{\substack{F:\Omega\rightarrow[d]\\
F(1,1,-)\,\text{increasing}\\
F(2,1,-)\,\text{increasing}
}
}\prod_{\gamma\in\Omega}\left(X_{\widetilde{w}(\gamma)}\right)_{F(\gamma),F(T\pi\pi'(\gamma))}.\label{eq:square of trace simplified}
\end{equation}
The map $\pi\mapsto T\pi T^{-1}$ is an isomorphism $\Sym(\Omega_{s,\ell})\overset{\simeq}{\rightarrow}\Sym(\Omega_{s,1})$,
for $s\in[2]$. Hence, 
\begin{align}
\left|\tr\bigwedge\nolimits ^{m}w(X_{1},\ldots,X_{r})\right|^{2} & =\sum_{(\pi,\pi')\in\Sym(\Omega_{1,1})\times\Sym(\Omega_{2,1})}(-1)^{\pi\pi'}\sum_{\substack{F:\Omega\rightarrow[d]\\
F(1,1,-)\,\text{increasing}\\
F(2,1,-)\,\text{increasing}
}
}\prod_{\gamma\in\Omega}\left(X_{\widetilde{w}(\gamma)}\right)_{F(\gamma),F(\pi\pi'T(\gamma))}\nonumber \\
 & =\sum_{(\pi,\pi')\in\Sym(\Omega_{1,1})\times\Sym(\Omega_{2,1})}(-1)^{\pi\pi'}\sum_{\substack{F:\Omega\rightarrow[d]\\
F(1,1,-)\,\text{increasing}\\
F(2,1,-)\,\text{increasing}
}
}\prod_{\gamma\in\Omega}\left(X_{\widetilde{w}(\gamma)}\right)_{F(\gamma),F((\pi\pi')^{-1}T(\gamma))}\nonumber \\
 & =\sum_{(\pi,\pi')\in\Sym(\Omega_{1,1})\times\Sym(\Omega_{2,1})}(-1)^{\pi\pi'}\sum_{\substack{F:\Omega\rightarrow[d]\\
F\circ\pi(1,1,-)\,\text{increasing}\\
F\circ\pi'(2,1,-)\,\text{increasing}
}
}\prod_{\gamma\in\Omega}\left(X_{\widetilde{w}(\gamma)}\right)_{F(\pi\pi'\gamma),F(T(\gamma))},\label{eq:very simplified formula for trace}
\end{align}
where, in the last equality, we replaced $F$ by $F\circ\left(\pi'\pi\right)^{-1}$. 

Let $\Phi=(A_{1},\ldots,A_{r},B_{1},\ldots,B_{r})$ be the partition
given by 
\begin{equation}
A_{i}=\left\{ (s,u,k)\mid\widetilde{w}(s,u,k)=i\right\} \quad B_{i}=\left\{ (s,u,k)\mid\widetilde{w}(s,u,k)=-i\right\} .\label{eq:A_i and B_i}
\end{equation}
For each $(\pi,\pi')\in\Sym(\Omega_{1,1})\times\Sym(\Omega_{2,1})$,
set
\begin{equation}
Z_{\pi,\pi'}:=\left\{ (F,\varSigma):\substack{F:\Omega\rightarrow[d],\varSigma\in S_{\Phi}\\
F\circ\pi(1,1,-),F\circ\pi'(2,1,-)\text{ increasing}\\
F\circ T=F\circ\pi\pi'\circ\Sigma
}
\right\} .\label{eq:the set Z_pi,pi'}
\end{equation}
The sets $Z_{\pi,\pi'}$ are disjoint. We denote
\begin{equation}
Z:=\bigcup_{\pi,\pi'}Z_{\pi,\pi'}.\label{eq:the set Z}
\end{equation}

\begin{rem}
\label{rem:pi_F}Note that we have a map $Z\rightarrow\Sym(\Omega_{1,1})\times\Sym(\Omega_{2,1})$
sending $(F,\varSigma)$ to the unique pair $(\pi_{F},\pi'_{F})$
such that $(F,\varSigma)\in Z_{\pi_{F},\pi'_{F}}$.
\end{rem}

Rewriting (\ref{eq:very simplified formula for trace}) using Weingarten
calculus (Corollary \ref{cor:Weingarten coordinate free- multidimensional}),
we have:
\begin{prop}
\label{prop:Main result-reduction to Weingarten}Let $w\in F_{r}$
be a cyclically reduced word. Then:
\begin{equation}
\mathbb{E}\left(\left|\tr\left(\bigwedge\nolimits ^{m}w(X_{1},\ldots,X_{r})\right)\right|^{2}\right)=\sum_{(F,\varSigma)\in Z}(-1)^{\pi_{F}\pi'_{F}}\widetilde{\Wg}(\varSigma^{2}).\label{eq:reduction to Weingarten}
\end{equation}
\end{prop}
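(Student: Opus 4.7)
The plan is to take expectations of equation (3.16), interchange expectation with the finite sums over $(\pi, \pi')$ and $F$, and then apply the coordinate-free Weingarten formula (Corollary \ref{cor:Weingarten coordinate free- multidimensional}) term by term. Since expectation and finite sums commute, the whole content of the proof lies in evaluating, for each fixed triple $(\pi, \pi', F)$, the expectation
\[
\mathbb{E}\left(\prod_{\gamma \in \Omega}\bigl(X_{\widetilde{w}(\gamma)}\bigr)_{F(\pi\pi'\gamma),\, F(T\gamma)}\right).
\]

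First, I would unpack this monomial according to the symmetric partition $\Phi=(A_1,\ldots,A_r,B_1,\ldots,B_r)$ defined in (\ref{eq:A_i and B_i}): by construction of $\widetilde{w}$, the factor indexed by $\gamma \in A_i$ is $(X_i)_{F(\pi\pi'\gamma),F(T\gamma)}$, and the factor indexed by $\gamma \in B_i$ is $(X_i^{-1})_{F(\pi\pi'\gamma),F(T\gamma)}$. This is precisely the shape to which Corollary \ref{cor:Weingarten coordinate free- multidimensional} applies, with the roles of the ``$F$'' and ``$H$'' of that corollary played by $F\circ\pi\pi'$ and $F\circ T$ respectively. The corollary then gives
\[
\mathbb{E}\left(\prod_{\gamma \in \Omega}\bigl(X_{\widetilde{w}(\gamma)}\bigr)_{F(\pi\pi'\gamma),\, F(T\gamma)}\right)
\;=\;\sum_{\substack{\Sigma \in H_\Phi\\ F\circ T \,=\, F\circ\pi\pi'\circ\Sigma}}\widetilde{\Wg}(\Sigma^2).
\]

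Next, I would recognize that the constraint $F\circ T = F\circ\pi\pi'\circ\Sigma$ appearing above is exactly the last condition in the definition (\ref{eq:the set Z_pi,pi'}) of $Z_{\pi,\pi'}$. Combining this with the two ``increasing'' constraints on $F$ carried along from (\ref{eq:very simplified formula for trace}) shows that, after substitution, the outer sum becomes a sum of $(-1)^{\pi\pi'}\widetilde{\Wg}(\Sigma^2)$ over quadruples $(\pi,\pi',F,\Sigma)$ with $(F,\Sigma)\in Z_{\pi,\pi'}$. Since the sets $Z_{\pi,\pi'}$ are pairwise disjoint and their union is $Z$ (see (\ref{eq:the set Z})), and since each $(F,\Sigma)\in Z$ has a unique associated pair $(\pi_F,\pi'_F)$ by Remark \ref{rem:pi_F}, reindexing by $(F,\Sigma)\in Z$ produces exactly (\ref{eq:reduction to Weingarten}).

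There is essentially no obstacle beyond careful bookkeeping. The only place demanding a moment of attention is confirming that the decomposition of the monomial matches Corollary \ref{cor:Weingarten coordinate free- multidimensional} with the indicated choice of $F$ and $H$, which is immediate from the definitions of $\widetilde{w}$, $A_i$, and $B_i$. The increasing conditions on $F\circ\pi(1,1,-)$ and $F\circ\pi'(2,1,-)$ play no role in the Weingarten step; they are simply carried along as part of the definition of $Z$.
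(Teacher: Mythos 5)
Your proof is correct and supplies exactly the details that the paper leaves implicit: the paper records the proposition as following immediately by ``rewriting (\ref{eq:very simplified formula for trace}) using Corollary \ref{cor:Weingarten coordinate free- multidimensional},'' and your identification of the corollary's two index functions with $F\circ\pi\pi'$ and $F\circ T$, followed by reindexing via the disjoint union $Z=\bigsqcup_{\pi,\pi'}Z_{\pi,\pi'}$ and Remark \ref{rem:pi_F}, is precisely that argument. (Minor slip: the opening reference ``(3.16)'' does not correspond to the intended display; the later citation of (\ref{eq:very simplified formula for trace}) is the correct one.)
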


\section{\label{sec:Estimating-the-contribution}Estimating the contribution
of a single orbit in $Z$}

In this section we introduce an action of $H:=\prod_{(s,u)\in[2]\times[\ell]}\Sym(\Omega_{s,u})$
on $Z$, and estimate (\ref{eq:reduction to Weingarten}) restricted
to each $H$-orbit. The action can be described as follows. 

For every $(s,u)\in[2]\times[\ell]$, the group $\Sym(\Omega_{s,u})$
acts on $Z$ in the following way: if $u\neq1$, the action of $\pi_{s,u}\in\Sym(\Omega_{s,u})$
is 
\begin{equation}
\pi_{s,u}.(F,\varSigma)=(F\circ\pi_{s,u}^{-1},\pi_{s,u}\circ\Sigma\circ T^{-1}\pi_{s,u}^{-1}T).\label{eq:most of the action}
\end{equation}
If $s\in[2]$ and $\pi_{s,1}\in\Sym(\Omega_{s,1})$, then 
\begin{equation}
\pi_{s,1}.(F,\varSigma)=(F\circ\pi_{s,1}^{-1},\varSigma\circ T^{-1}\pi_{s,1}^{-1}T).\label{eq:action of u=00003D1}
\end{equation}
The above group actions commute, which gives rise to an action of
$H$. Note that $(\pi_{1,1},\pi_{2,1}).Z_{\pi,\pi'}=Z_{\pi_{1,1}\pi,\pi_{2,1}\pi'}$.
If $u\neq1$, then $\pi_{s,u}.(Z_{\pi,\pi'})=Z_{\pi,\pi'}$. 
\begin{defn}
\label{def:new multiplication}For each $u,v\in[\ell]$, we define
$*:$$\Sym(\Omega_{s,u})\times\Sym(\Omega_{s,v})\rightarrow\Sym(\Omega_{s,v})$
by 
\begin{equation}
\pi_{s,u}*\pi_{s,v}:=T^{v-u}\pi_{s,u}T^{u-v}\pi_{s,v}.\label{eq:new action on premutations}
\end{equation}
Note that $*$ is associative. 
\end{defn}

Let $h:=\prod_{(s,u)}\pi_{s,u}\in H$ and denote $\overline{h}:=\prod_{(s,u)\neq(1,1),(2,1)}\pi_{s,u}$.
Then $h.\varSigma=\overline{h}\circ\varSigma\circ T^{-1}h^{-1}T$.
Since $\widetilde{\Wg}$ is invariant under conjugation in $H$,
\[
\widetilde{\Wg}\left(\left(h.(\varSigma)\right)^{2}\right)=\widetilde{\Wg}\left(\Psi_{h}\circ\varSigma\circ\Psi_{h}\circ\varSigma\right),
\]
where $\Psi_{h}=T^{-1}h^{-1}T\overline{h}\in H$. On each $\Omega_{s,u}$,
$\Psi_{h}$ has the following form:
\begin{lem}
\label{lem:Structure of =00005Cpsi_h}We have
\[
\Psi_{h}|_{\Omega_{s,u}}=\begin{cases}
T^{-1}\pi_{s,2}^{-1}T & \text{if }u=1\\
\pi_{s,u+1}^{-1}*\pi_{s,u} & \text{if }u\neq1,\ell\\
\pi_{s,1}^{-1}*\pi_{s,\ell} & \text{if }u=\ell.
\end{cases}
\]
\end{lem}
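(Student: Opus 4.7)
The plan is a direct computation, unwinding the definition $\Psi_{h}=T^{-1}h^{-1}T\overline{h}$ and comparing it to the operation $*$ from Definition \ref{def:new multiplication}. The key structural fact I will use is that $T$ acts on $\Omega=[2]\times[\ell]\times[m]$ by cyclically shifting the $u$-coordinate on each strand $\{s\}\times[\ell]\times\{k\}$, so $T$ restricts to a bijection $\Omega_{s,u}\to\Omega_{s,u+1\bmod\ell}$ and $T^{\ell}|_{\Omega_{s,u}}=\mathrm{Id}$. Also, by definition $\overline{h}|_{\Omega_{s,u}}=\pi_{s,u}$ when $u\neq1$, and $\overline{h}|_{\Omega_{s,1}}=\mathrm{Id}$, while $h^{-1}|_{\Omega_{s,u}}=\pi_{s,u}^{-1}$ for every $u$.

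I would then split into the three cases and trace through the composition. For $u=1$, since $\overline{h}$ is trivial on $\Omega_{s,1}$, applying $T$ lands in $\Omega_{s,2}$, where $h^{-1}$ acts as $\pi_{s,2}^{-1}$, and then $T^{-1}$ returns to $\Omega_{s,1}$, giving $\Psi_{h}|_{\Omega_{s,1}}=T^{-1}\pi_{s,2}^{-1}T$. For $2\leq u\leq\ell-1$, we first apply $\pi_{s,u}$, then $T$ which moves us to $\Omega_{s,u+1}$, then $\pi_{s,u+1}^{-1}$, then $T^{-1}$, so $\Psi_{h}|_{\Omega_{s,u}}=T^{-1}\pi_{s,u+1}^{-1}T\pi_{s,u}$. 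Comparing with Definition \ref{def:new multiplication} applied with shift $v-u=-1$, the right-hand side equals $\pi_{s,u+1}^{-1}*\pi_{s,u}$.

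The remaining case $u=\ell$ is the only one that uses the cyclic identification. Here $T$ carries $\Omega_{s,\ell}$ to $\Omega_{s,1}$, on which $h^{-1}$ acts as $\pi_{s,1}^{-1}$, giving $\Psi_{h}|_{\Omega_{s,\ell}}=T^{-1}\pi_{s,1}^{-1}T\pi_{s,\ell}$. To match the formula $\pi_{s,1}^{-1}*\pi_{s,\ell}=T^{\ell-1}\pi_{s,1}^{-1}T^{1-\ell}\pi_{s,\ell}$ coming from Definition \ref{def:new multiplication}, I invoke $T^{\ell}|_{\Omega_{s,u}}=\mathrm{Id}$ to rewrite $T^{\ell-1}=T^{-1}$ and $T^{1-\ell}=T$ on the relevant strands.

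There is no real obstacle; the only bookkeeping point worth being careful about is that the identities $T^{\ell-1}=T^{-1}$ and $T^{1-\ell}=T$ are only valid after restricting to a single $\Omega_{s,u}$ (since $T^{\ell}$ is the identity only on each strand), which is exactly what is needed for the case analysis above.
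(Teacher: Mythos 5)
Your direct computation is correct; the paper states this lemma without proof, and your unwinding of $\Psi_h=T^{-1}h^{-1}T\overline{h}$ on each $\Omega_{s,u}$, compared case by case against Definition \ref{def:new multiplication}, is exactly what is called for. One small overstatement in your closing caveat: $T^{\ell}$ is in fact the identity on all of $\Omega$, not merely after restriction to a single $\Omega_{s,u}$, since $T$ cyclically shifts the $u$-coordinate with period $\ell$ on every $\Omega_{s,u}$ simultaneously; this only makes the last case ($u=\ell$) cleaner, and does not affect the validity of your argument.
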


\begin{cor}
\label{cor:analyzing H-orbits}Let $(\widehat{F},\widehat{\varSigma})$
be a representative of an $H$-orbit $\mathcal{O}_{(\widehat{F},\widehat{\varSigma})}$,
with $(\pi_{\widehat{F}},\pi'_{\widehat{F}})=(\mathrm{Id},\mathrm{Id})$.
Then:
\begin{equation}
\frac{1}{\left|\mathcal{O}_{(\widehat{F},\widehat{\varSigma})}\right|}\sum_{(F,\varSigma)\in\mathcal{O}_{(\widehat{F},\widehat{\varSigma})}}(-1)^{\pi_{F}\pi'_{F}}\widetilde{\Wg}(\varSigma^{2})=\frac{1}{m!^{2\ell}}\prod_{i=1}^{r}\sum_{\substack{h_{i}\in\prod_{\widetilde{w}=i}\Sym(\Omega_{s,u})\\
h'_{i}\in\prod_{\widetilde{w}=-i}\Sym(\Omega_{s,u})
}
}(-1)^{h_{i}h'_{i}}\Wg\left(h_{i}\widehat{\varSigma}|_{B_{i}}h'_{i}\widehat{\varSigma}|_{A_{i}}\right).\label{eq:cor 5.3}
\end{equation}
\end{cor}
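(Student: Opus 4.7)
The plan is to follow the template of Proposition~\ref{prop:estimates on a single L-orbit Engel} from the Engel-word model case, in three steps: (a) rewrite the orbital average as an average over $H$ using the orbit-stabilizer theorem; (b) re-parameterize $H$ by the auxiliary variables $\rho_{s,u}:=\Psi_h|_{\Omega_{s,u}}$ coming from Lemma~\ref{lem:Structure of =00005Cpsi_h}; (c) observe that, in these new variables, both the Weingarten product and the sign split into $r$ independent pieces, one per letter of $w$.

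For (a), any $H$-invariant function is constant on each orbit, so $\frac{1}{|\mathcal{O}|}\sum_{\mathcal{O}}f=\frac{1}{|H|}\sum_{h\in H}f(h.(\widehat F,\widehat\varSigma))$. Writing $h=\prod_{(s,u)}\pi_{s,u}$ and using that $(\pi_{1,1},\pi_{2,1}).Z_{\mathrm{Id},\mathrm{Id}}=Z_{\pi_{1,1},\pi_{2,1}}$ while each $\pi_{s,u}$ with $u\neq 1$ preserves the $Z_{\pi,\pi'}$ component, the label of $h.(\widehat F,\widehat\varSigma)$ is $(\pi_{1,1},\pi_{2,1})$, so $(-1)^{\pi_F\pi'_F}=(-1)^{\pi_{1,1}\pi_{2,1}}$. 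Since $\widetilde{\Wg}$ is conjugation invariant, $\widetilde{\Wg}((h.\widehat\varSigma)^2)=\widetilde{\Wg}(\Psi_h\widehat\varSigma\Psi_h\widehat\varSigma)$; and because $\Psi_h\in H$ preserves every $\Omega_{s,u}\subseteq A_{|\widetilde w(s,u)|}\cup B_{|\widetilde w(s,u)|}$, it preserves each $A_i$ and $B_i$. Restricting to $A_i$ therefore gives
\[
\widetilde{\Wg}((h.\widehat\varSigma)^2)=\prod_{i=1}^r\Wg\!\left(\Psi_h|_{A_i}\,\widehat\varSigma|_{B_i}\,\Psi_h|_{B_i}\,\widehat\varSigma|_{A_i}\right).
\]

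For (b), set $\rho_{s,u}:=\Psi_h|_{\Omega_{s,u}}$. I claim $h\mapsto(\rho_{s,u})_{(s,u)\in[2]\times[\ell]}$ is a bijection $H\to H$. This is checked separately for each $s$ by induction on $u$: Lemma~\ref{lem:Structure of =00005Cpsi_h} expresses $\rho_{s,1}$ in terms of $\pi_{s,2}$ only, $\rho_{s,u}$ in terms of $\pi_{s,u+1}$ and $\pi_{s,u}$ for $2\le u\le\ell-1$, and $\rho_{s,\ell}$ in terms of $\pi_{s,1}$ and $\pi_{s,\ell}$. Inverting these formulas in turn recovers $\pi_{s,2},\pi_{s,3},\ldots,\pi_{s,\ell},\pi_{s,1}$ uniquely from $\rho_{s,1},\rho_{s,2},\ldots,\rho_{s,\ell}$. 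Grouping the $\rho_{s,u}$'s by letter of $w$, set $h_i:=\prod_{\widetilde w(s,u)=i}\rho_{s,u}$ and $h'_i:=\prod_{\widetilde w(s,u)=-i}\rho_{s,u}$. Then $\Psi_h|_{A_i}=h_i$ and $\Psi_h|_{B_i}=h'_i$, and the pair $(h_i,h'_i)$ ranges freely and independently over $\prod_{\widetilde w(s,u)=i}\Sym(\Omega_{s,u})\times\prod_{\widetilde w(s,u)=-i}\Sym(\Omega_{s,u})$ as $h$ varies over $H$. For (c), the sign identity $(-1)^{\pi_{1,1}\pi_{2,1}}=\prod_{(s,u)}\sgn(\rho_{s,u})=\prod_{i=1}^r(-1)^{h_ih'_i}$ follows by telescoping: Lemma~\ref{lem:Structure of =00005Cpsi_h} together with conjugation-invariance of $\sgn$ yields $\sgn(\rho_{s,1})=\sgn(\pi_{s,2})$, $\sgn(\rho_{s,u})=\sgn(\pi_{s,u+1})\sgn(\pi_{s,u})$ for $2\le u\le\ell-1$, and $\sgn(\rho_{s,\ell})=\sgn(\pi_{s,1})\sgn(\pi_{s,\ell})$, so every $\sgn(\pi_{s,u})$ with $u\ge 2$ cancels in pairs. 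Substituting back and using $|H|=m!^{2\ell}$ produces exactly (\ref{eq:cor 5.3}).

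The step I expect to be the main obstacle is (b): verifying the bijectivity of the substitution $\pi_{s,u}\leftrightarrow\rho_{s,u}$ on $H$, together with the sign telescoping. Both rely delicately on the cyclic structure of $T$ (so that the recursion closes up at $u=\ell$) and on the unipotent-like way in which $\Psi_h$ is assembled from the $\pi_{s,u}$'s via the $*$-multiplication of Definition~\ref{def:new multiplication}; getting the boundary case $u=\ell$ right is what makes the whole argument work.
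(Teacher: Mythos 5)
Your proposal is correct and follows essentially the same strategy as the paper: average over $H$ instead of the orbit, change variables to make $\Psi_h$ split into independent factors on the $\Omega_{s,u}$'s, and match the telescoping sign. The only cosmetic difference is that you define the new coordinates directly as $\rho_{s,u}=\Psi_h|_{\Omega_{s,u}}$ and prove bijectivity by induction, whereas the paper writes an explicit bijection $\psi$ and then re-labels $T^{-1}\pi_{s,u}^{-1}T\mapsto\pi_{s,u}$ in a second step; your single substitution is the composite of those two.
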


\begin{proof}
For each $h=\prod_{(s,u)}\pi_{s,u}\in H$, write $\nu(h):=(-1)^{\pi_{1,1}\pi_{2,1}}$.
Consider the bijection $\psi:H\rightarrow H$, $\psi\left(\prod_{(s,u)}\pi_{s,u}\right)=\prod_{(s,u)}\theta_{s,u}$,
where, for $s=1,2$,
\[
(\theta_{s,2},\ldots,\theta_{s,\ell},\theta_{s,1})=(\pi_{s,2},\pi_{s,2}*\pi_{s,3},\ldots,\pi_{s,2}*\cdots*\pi_{s,\ell},\pi_{s,2}*\cdots*\pi_{s,\ell}*\pi_{s,1}),
\]
and observe that $\nu(\psi(h))=(-1)^{h}=(-1)^{T^{-1}h^{-1}T}$. Further
note that
\[
\left(\pi_{s,2}*\cdots*\pi_{s,u+1}\right)^{-1}*\pi_{s,2}*\cdots*\pi_{s,u}=T^{-1}\pi_{s,u+1}^{-1}T,
\]
and hence $\Psi_{\psi(h)}=\prod_{(s,u)}T^{-1}\pi_{s,u}^{-1}T$. Changing
variables using $\psi$, the left hand side of (\ref{eq:cor 5.3})
is:
\begin{align*}
\frac{1}{\left|H\right|}\sum_{h\in H}\nu(h)\widetilde{\Wg}\left(\Psi_{h}\circ\widehat{\varSigma}\circ\Psi_{h}\circ\widehat{\varSigma}\right) & =\frac{1}{m!^{2\ell}}\sum_{h\in H}\nu(\psi(h))\widetilde{\Wg}\left(\prod_{(s,u)}T^{-1}\pi_{s,u}^{-1}T\circ\widehat{\varSigma}\circ\prod_{(s,u)}T^{-1}\pi_{s,u}^{-1}T\circ\widehat{\varSigma}\right)\\
 & =\frac{1}{m!^{2\ell}}\sum_{h\in H}(-1)^{h}\widetilde{\Wg}\left(\prod_{(s,u)}\pi_{s,u}\circ\widehat{\varSigma}\circ\prod_{(s,u)}\pi_{s,u}\circ\widehat{\varSigma}\right)\\
 & =\frac{1}{m!^{2\ell}}\sum_{h\in H}(-1)^{h}\prod_{i=1}^{r}\Wg\left(\prod_{(s,u):\widetilde{w}=i}\pi_{s,u}\widehat{\varSigma}|_{B_{i}}\prod_{(s,u):\widetilde{w}=-i}\pi_{s,u}\widehat{\varSigma}|_{A_{i}}\right),
\end{align*}
where in each line above, $h=\prod_{(s,u)}\pi_{s,u}$.
\end{proof}
\begin{cor}
\label{cor:reduction to L-orbits on Z}Set $\ell_{i}:=\frac{\left|A_{i}\right|}{m}$
for each $i\in[r]$. Then the following holds: 
\begin{equation}
\mathbb{E}\left(\left|\tr\left(\bigwedge\nolimits ^{m}w(X_{1},\ldots,X_{r})\right)\right|^{2}\right)\leq\frac{\left|Z\right|}{m!^{\ell}}\prod_{i=1}^{r}\frac{1}{d\cdots(d-m\ell_{i}+1)}.\label{eq:reduction to the size of Z}
\end{equation}
\end{cor}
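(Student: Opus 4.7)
The plan is to decompose $Z$ into $H$-orbits, apply Corollary~\ref{cor:analyzing H-orbits} to rewrite each orbit's contribution as a product indexed by $i\in[r]$, and bound each factor using the Fourier expansion of the Weingarten function together with the character-averaging estimates of $\mathsection$\ref{subsec:Averaging-characters-over}. Since the action of $(\pi_{1,1},\pi_{2,1})\in H$ sends $Z_{\pi,\pi'}$ to $Z_{\pi_{1,1}\pi,\,\pi_{2,1}\pi'}$, every $H$-orbit $\mathcal{O}$ admits a representative $(\widehat{F},\widehat{\Sigma})$ with $(\pi_{\widehat{F}},\pi'_{\widehat{F}})=(\mathrm{Id},\mathrm{Id})$, so Corollary~\ref{cor:analyzing H-orbits} applies. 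Combining Proposition~\ref{prop:Main result-reduction to Weingarten} with the triangle inequality, it suffices to show, for every orbit $\mathcal{O}$,
\[
\left|\frac{1}{|\mathcal{O}|}\sum_{(F,\Sigma)\in \mathcal{O}}(-1)^{\pi_F\pi'_F}\widetilde{\Wg}(\Sigma^2)\right|\leq \frac{1}{m!^{\ell}}\prod_{i=1}^{r}\frac{1}{d(d-1)\cdots(d-m\ell_i+1)},
\]
since $\sum_{\mathcal{O}}|\mathcal{O}|=|Z|$.

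By Corollary~\ref{cor:analyzing H-orbits}, the orbit average equals $\prod_{i=1}^r \Lambda_i$, where
\[
\Lambda_i:=\frac{1}{m!^{2\ell_i}}\sum_{h_i,h'_i\in S_m^{\ell_i}}(-1)^{h_ih'_i}\Wg\!\left(h_i\widehat{\Sigma}|_{B_i}h'_i\widehat{\Sigma}|_{A_i}\right),
\]
so by $\sum_i\ell_i=\ell$ the task reduces to proving $|\Lambda_i|\leq m!^{-\ell_i}/[d(d-1)\cdots(d-m\ell_i+1)]$. Substituting the Fourier expansion~(\ref{eq:formula for Weingarten}) of $\Wg$ on $S_{m\ell_i}$ and interchanging sums, $\Lambda_i$ becomes a linear combination of double sign-twisted averages of irreducible characters $\chi_\lambda$ over $S_m^{\ell_i}$. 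Such a double average can be written as $\tr\!\bigl(P_{\sgn}\pi_\lambda(\widehat{\Sigma}|_{B_i})P_{\sgn}\pi_\lambda(\widehat{\Sigma}|_{A_i})\bigr)$, where $P_{\sgn}$ is the orthogonal projection onto the $(S_m^{\ell_i},\sgn)$-isotypic subspace of $V_\lambda$, and is therefore bounded in absolute value by $\langle\chi_\lambda|_{S_m^{\ell_i}},\sgn\rangle$ (this follows from applying the argument of Lemma~\ref{lem:primitive character bounds} to both cosets). By Lemma~\ref{lem:description of littlewood richardson for Young subgroups}, this multiplicity vanishes unless the Young diagram of $\lambda$ has at most $\ell_i$ columns.

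For any such $\lambda$, using $d\geq m\ell\geq m\ell_i$ the diagram fits within the strip $\ell(\lambda)\leq d$, and Remark~\ref{rem:formula for dimension of rep} gives $\rho_\lambda(1)=\chi_\lambda(1)\prod_{(a,b)\in\lambda}(d+b-a)/(m\ell_i)!$; a cell-by-cell comparison via a reading-order bijection $\lambda\to[m\ell_i]$ shows that $\prod_{(a,b)\in\lambda}(d+b-a)\geq d(d-1)\cdots(d-m\ell_i+1)$, with equality when $\lambda$ is a single column. Combined with the Frobenius-reciprocity identity
\[
\sum_{\lambda\vdash m\ell_i}\chi_\lambda(1)\,\langle\chi_\lambda|_{S_m^{\ell_i}},\sgn\rangle=\dim\mathrm{Ind}_{S_m^{\ell_i}}^{S_{m\ell_i}}\sgn=\frac{(m\ell_i)!}{m!^{\ell_i}},
\]
these estimates assemble into $|\Lambda_i|\leq m!^{-\ell_i}/[d(d-1)\cdots(d-m\ell_i+1)]$, as required. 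The main subtlety is the character-averaging step: because $(S_{m\ell_i},S_m^{\ell_i})$ is not a twisted Gelfand pair when $\ell_i\geq 3$, the multiplicities $\langle\chi_\lambda|_{S_m^{\ell_i}},\sgn\rangle$ cannot be dismissed as in the Engel computation ($\mathsection$\ref{sec:The-Engel-word}); they must be tracked through the $\lambda$-sum and eliminated at the end via the dimension identity above.
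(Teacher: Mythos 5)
Your argument is correct and follows essentially the same route as the paper's proof: decompose $Z$ into $H$-orbits, apply Corollary~\ref{cor:analyzing H-orbits}, expand each Weingarten factor in characters, bound the double sign-twisted average by $\langle\chi_\lambda|_{S_m^{\ell_i}},\sgn\rangle$ via Lemma~\ref{lem:primitive character bounds}, restrict to partitions with at most $\ell_i$ columns by Lemma~\ref{lem:description of littlewood richardson for Young subgroups}, lower-bound $\prod_{(a,b)\in\lambda}(d+b-a)$ by $d\cdots(d-m\ell_i+1)$, and collapse the remaining $\lambda$-sum using $\sum_\lambda\chi_\lambda(1)\langle\chi_\lambda,\sgn\rangle = (m\ell_i)!/m!^{\ell_i}$. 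Your $P_{\sgn}$-projection formulation of the double coset average and the closing observation that the multiplicities $\langle\chi_\lambda,\sgn\rangle$ must be carried through (unlike in the Gelfand-pair case $\ell_i=2$) are accurate elaborations of what the paper does implicitly.
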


\begin{proof}
By Proposition \ref{prop:Main result-reduction to Weingarten}, Corollary
\ref{cor:analyzing H-orbits}, Equation (\ref{eq:Weingarten function}),
Lemma \ref{lem:primitive character bounds}, and by Equation (\ref{eq:formula for dimension}),
\begin{align}
\mathbb{E}\left(\left|\tr\left(\bigwedge\nolimits ^{m}w(X_{1},\ldots,X_{r})\right)\right|^{2}\right)= & \sum_{(\widehat{F},\widehat{\varSigma})\in Z}(-1)^{\pi_{\widehat{F}}\pi'_{\widehat{F}}}\widetilde{\Wg}(\widehat{\varSigma}^{2})\nonumber \\
= & \sum_{(\widehat{F},\widehat{\varSigma})\in Z}\frac{1}{\left|\mathcal{O}_{(\widehat{F},\widehat{\varSigma})}\right|}\sum_{(F,\varSigma)\in\mathcal{O}_{(\widehat{F},\widehat{\varSigma})}}(-1)^{\pi_{F}\pi'_{F}}\widetilde{\Wg}(\varSigma^{2})\nonumber \\
\leq & \sum_{(\widehat{F},\widehat{\varSigma})\in Z}\frac{1}{m!^{2\ell}}\prod_{i=1}^{r}\left|\sum_{\substack{h_{i}\in\prod_{\widetilde{w}=i}\Sym(\Omega_{s,u})\\
h'_{i}\in\prod_{\widetilde{w}=-i}\Sym(\Omega_{s,u})
}
}(-1)^{h_{i}h'_{i}}\Wg\left(h_{i}\widehat{\varSigma}|_{B_{i}}h'_{i}\widehat{\varSigma}|_{A_{i}}\right)\right|\nonumber \\
\leq & \sum_{(\widehat{F},\widehat{\varSigma})\in Z}\frac{1}{m!^{2\ell}}\prod_{i=1}^{r}\frac{m!^{2\ell_{i}}}{(m\ell_{i})!^{2}}\sum_{\lambda\vdash m\ell_{i}:\chi_{\lambda}\subseteq\mathrm{Ind}_{\mathrm{S}_{m}^{\ell_{i}}}^{S_{m\ell_{i}}}(\sgn)}\langle\chi_{\lambda},\sgn\rangle_{\mathrm{S}_{m}^{\ell_{i}}}\frac{\chi_{\lambda}(1)^{2}}{\rho_{\lambda}(1)}\nonumber \\
= & \frac{\left|Z\right|}{m!^{\ell}}\prod_{i=1}^{r}\frac{m!^{\ell_{i}}}{(m\ell_{i})!}\sum_{\lambda\vdash m\ell_{i}:\chi_{\lambda}\subseteq\mathrm{Ind}_{\mathrm{S}_{m}^{\ell_{i}}}^{S_{m\ell_{i}}}(\sgn)}\frac{\langle\chi_{\lambda},\sgn\rangle_{\mathrm{S}_{m}^{\ell_{i}}}\chi_{\lambda}(1)}{\prod_{(a,b)\in\lambda}(d+b-a)}.\label{eq:Z.to.orbit}
\end{align}
Note that the irreducible characters $\chi_{\lambda}$ in $\mathrm{Ind}_{\mathrm{S}_{m}^{\ell_{i}}}^{S_{m\ell_{i}}}(\sgn)$
correspond to Young diagrams $\lambda\vdash m\ell_{i}$ with at most
$\ell_{i}$ columns. If the columns of $\lambda$ are of lengths $j_{1}\geq\ldots\geq j_{\ell_{i}}$
then
\begin{equation}
\prod_{(a,b)\in\lambda}(d+b-a)\geq d\cdots(d-j_{1}+1)\cdot d\cdots(d-j_{2}+1)\cdot\ldots\cdot d\cdots(d-j_{\ell_{i}}+1)\geq d\cdots(d-m\ell_{i}+1).\label{eq:(5.6)}
\end{equation}
Combining (\ref{eq:Z.to.orbit}) with (\ref{eq:(5.6)}) implies the
corollary. 
\end{proof}

\section{\label{sec:Estimates-on}Estimates on $\left|Z\right|$}

In this section we give upper bounds on $\left|Z\right|$, defined
in (\ref{eq:the set Z}). We first set some notation. For each $0\neq i,j\in[-r,r]$,
set 
\[
R_{i}:=\left\{ \gamma\in\Omega:\widetilde{w}\circ T^{-1}(\gamma)=i\right\} =\begin{cases}
T(A_{i}) & i>0\\
T(B_{-i}) & i<0
\end{cases},
\]
\[
C_{j}:=\left\{ \gamma\in\Omega:\widetilde{w}(\gamma)=-j\right\} =\begin{cases}
B_{j} & j>0\\
A_{-j} & j<0
\end{cases},
\]
\[
V_{ij}:=\left\{ \gamma\in\Omega:\widetilde{w}\circ T^{-1}(\gamma)=i,\widetilde{w}(\gamma)=-j\right\} =R_{i}\cap C_{j}.
\]
Following Remark \ref{rem:generalization to arbitrary words}, it
is helpful to picture a $2r\times2r$ matrix, whose $(i,j)$-th entry
is the set $V_{ij}$, with $R_{-r},\ldots,R_{r}$ correspond to rows,
and $C_{-r},\ldots,C_{r}$ correspond to columns. Denote 
\begin{equation}
\ell_{i,j}:=\frac{\left|V_{ij}\right|}{m}\text{ and }\ell_{i}:=\frac{\left|R_{i}\right|}{m}=\frac{\left|C_{i}\right|}{m}.\label{eq:l_ij}
\end{equation}
Observe that $\ell_{i}=\ell_{-i}$, $\ell_{i,j}=\ell_{j,i}$ and note
that $\ell_{i}=\frac{\left|A_{i}\right|}{m}$ if $i>0$, so that (\ref{eq:l_ij})
extends the definition of $\ell_{i}$ in Corollary \ref{cor:reduction to L-orbits on Z}.
For each $0\neq j\in[-r,r]$ set
\[
C_{j}^{+}:=\bigcup_{i<j}V_{ij},\,\,C_{+}:=\bigcup_{j}C_{j}^{+}.
\]
For each $i\in[r]$ and each $\varSigma\in S_{\Phi}$, denote $\eta_{i}:=T\circ\left(\varSigma^{-1}\right)|_{B_{i}}$
and $\eta_{-i}:=T\circ\left(\varSigma^{-1}\right)|_{A_{i}}$. Notice
that $\eta_{i}(C_{i})=R_{i}$ for all $i$. Define the following sets:
\begin{equation}
W':=\left\{ \left(F:\Omega\rightarrow[d],\varSigma\in S_{\Phi}\right):F\circ T=F\circ\Sigma\right\} ,\label{eq:W'}
\end{equation}
and
\[
W:=\left\{ \left(F,\varSigma\right)\in W':F(s,1,-)\text{ is one-to-one\,\,\,}\forall s\in[2]\right\} .
\]

\begin{prop}
\label{prop:estimates on Z}We have
\[
\left|Z\right|=\left|W\right|\leq\left|W'\right|\leq\binom{d+m\ell}{m\ell}(m\ell)!\prod_{0\neq k=-r}^{r}\frac{\left(m\ell_{k}\right)!}{\left(\sum_{i<k}m\ell_{i,k}\right)!}.
\]
\end{prop}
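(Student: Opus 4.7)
The plan is to prove the three inequalities in the chain $|Z|=|W|\le|W'|\le\binom{d+m\ell}{m\ell}(m\ell)!\prod_{k}(m\ell_k)!/s_k!$ in turn, where I write $s_k:=\sum_{i<k}m\ell_{i,k}$.

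For $|Z|=|W|$, I construct a bijection $\Phi:Z\to W$ by $\Phi(F,\Sigma):=(F,\pi_F\pi_F'\Sigma)$ using Remark~\ref{rem:pi_F}. The defining condition of $Z_{\pi,\pi'}$---that $F\circ\pi(1,1,-)$ and $F\circ\pi'(2,1,-)$ be strictly increasing---is equivalent to $F|_{\Omega_{1,1}}$ and $F|_{\Omega_{2,1}}$ being one-to-one, with $(\pi,\pi')$ being their unique sorting permutations. Since $w$ is cyclically reduced, each $\Omega_{s,1}$ is contained in a single $A_{i}$ or $B_{i}$ (the one indexed by $|w(1)|$ or $|w(\ell)|$, respectively), so $\pi_F\pi_F'$ preserves the symmetric partition $\Phi$ pointwise off these subsets, giving $\pi_F\pi_F'\Sigma\in H_\Phi$; and the identity $F\circ T=F\circ\pi_F\pi_F'\circ\Sigma$ defining $Z_{\pi_F,\pi_F'}$ becomes the identity $F\circ T=F\circ(\pi_F\pi_F'\Sigma)$ defining $W'$. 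The one-to-oneness of $F|_{\Omega_{s,1}}$ places the image in $W$, and $\Phi$ is inverted by uniquely sorting $F|_{\Omega_{s,1}}$. The inclusion $W\subseteq W'$ gives $|W|\leq|W'|$ immediately.

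To bound $|W'|$, substitute $\eta:=T\Sigma^{-1}\in\Sym(\Omega)$, obtaining a bijection between $W'$ and the set of pairs $(F,\eta)$ with $\eta(C_k)=R_k$ for every $k$ and $F=F\circ\eta$. Fix an ordering $k_1<\cdots<k_{2r}$ of $\{\pm1,\ldots,\pm r\}$ and count $(F,\eta)$ inductively, revealing at step $t$ the bijection $\eta|_{C_{k_t}}:C_{k_t}\to R_{k_t}$ together with the new values of $F$ on $R_{k_t}$. Before step $t$, the function $F$ is already determined on $\bigsqcup_{s<t}R_{k_s}$; in particular on the ``upper triangular'' piece $C_{k_t}^+=\bigsqcup_{s<t}V_{k_s,k_t}\subseteq C_{k_t}$, of size $s_{k_t}$. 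At step $t$, I split $\eta|_{C_{k_t}}$ into an injection $C_{k_t}\setminus C_{k_t}^+\hookrightarrow R_{k_t}$ (contributing $(m\ell_{k_t})!/s_{k_t}!$ possibilities), a bijection on the complement, and the assignment of new $F$-values on the uncovered part of $R_{k_t}$. Taking the product across all $t$, the injection factors collect into $\prod_k(m\ell_k)!/s_k!$, while the $\sum_t(m\ell_{k_t}-s_{k_t})=m\ell$ new $F$-value choices, together with the bijection-completion pieces, are collectively bounded by the rising factorial $(d+1)(d+2)\cdots(d+m\ell)=\binom{d+m\ell}{m\ell}(m\ell)!$, via the identity $\sum_{\sigma\in S_n}(d+1)^{\mathrm{cyc}(\sigma)}=(d+1)(d+2)\cdots(d+n)$ that absorbs the remaining bijection freedom into an ordered assignment of the $F$-fibers.

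\textbf{Main obstacle.} The third step is the main obstacle. A direct step-by-step tally yields $d^{m\ell}\prod_k(m\ell_k)!$, which is of the same order as the target but differently shaped; matching the form of the stated bound requires the combinatorial reorganization described above, packaging the bijection-completion and the new $F$-value freedom together via the rising-factorial identity. A secondary subtlety is that some of the ``new'' $F$-values on $R_{k_t}$ may already have been forced by earlier steps (through the intersections $R_{k_t}\cap C_{k_s}=V_{k_t,k_s}$ for $s<t$); these extra constraints only tighten the count and hence ignoring them preserves the desired upper bound.
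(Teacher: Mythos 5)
Your bijection $(F,\Sigma)\mapsto(F,\pi_F\pi'_F\Sigma)$ for $|Z|=|W|$ is a correct variant of the paper's $(F,\Sigma)\mapsto(F\circ\pi_F\pi'_F,\ \Sigma\circ T^{-1}\pi_F\pi'_F T)$, and the check that $\pi_F\pi'_F\Sigma\in H_\Phi$ is exactly the right one. (It only uses that $\widetilde{w}$ is constant on each $\Omega_{s,1}$, which holds for any $w$; you did not actually need cyclic reducedness here.) The step $|W|\le|W'|$ is immediate. Your third step, however, takes a genuinely different route from the paper's. The paper conditions on $f=F|_{C_+}$ and its shape $\nu_+$, bounds the fiber $|\Phi_+^{-1}(f)|$ by $\nu_+!\prod_k(m\ell_k)!/s_k!$ --- the $\nu_+!$ counting $F$-fiber-preserving bijections of $C_k^+$ --- and then cancels the $\nu_+!$ against the count $(m\ell)!/\nu_+!$ of $f$'s of that shape. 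You instead observe that $|W'|=\sum_{\eta:\,\eta(C_k)=R_k}d^{\mathrm{cyc}(\eta)}$ and try to bound this directly via $\sum_{\sigma\in S_n}x^{\mathrm{cyc}(\sigma)}=x(x+1)\cdots(x+n-1)$. That is a nice idea and, when completed, even gives the slightly sharper bound $\prod_k\frac{(m\ell_k)!}{s_k!}\cdot d(d+1)\cdots(d+m\ell-1)$.

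But as written there is a genuine gap, and it is precisely the step you flag as the ``main obstacle.'' Write $\iota=\bigsqcup_t\iota_t$ for the partial permutation of $\Omega$ built from your injections $\iota_t:C_{k_t}\setminus C_{k_t}^+\hookrightarrow R_{k_t}$; its domain has complement $C_+$ of size $m\ell$, so it has $m\ell$ maximal chains. Completing $\iota$ to a permutation $\eta$ by a bijection $\beta$ on those chains gives $\mathrm{cyc}(\eta)=\mathrm{cyc}(\iota)+\mathrm{cyc}(\sigma_\beta)$, where $\sigma_\beta\in S_{m\ell}$ is the induced permutation of chains, and it is only the $\sigma_\beta$ part that the identity $\sum_{\sigma\in S_{m\ell}}d^{\mathrm{cyc}(\sigma)}=d(d+1)\cdots(d+m\ell-1)$ absorbs. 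For the argument to close you therefore need $\mathrm{cyc}(\iota)=0$ for every choice of the $\iota_t$'s, i.e., the partial permutation $\iota$ must be acyclic. This is true, and it is exactly where the ordering $k_1<\cdots<k_{2r}$ earns its keep: a point in the domain of $\iota$ sent by $\iota_t$ into $R_{k_t}$ that again lies in the domain lies in some $C_{k_{t'}}\setminus C_{k_{t'}}^+$, hence in $V_{k_t,k_{t'}}$ with $k_{t'}<k_t$ (since $V_{k_t,k_t}=\emptyset$ and $C_{k_{t'}}^+$ excludes $i\ge k_{t'}$); so $\iota$ strictly decreases the column index and cannot close up. You never state or prove this, and without it the factor $d^{\mathrm{cyc}(\iota)}$ is unaccounted for --- indeed the cruder estimate one is left with, $d^{m\ell}\prod_k s_k!$, is not $\le(d+1)\cdots(d+m\ell)$ in general. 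Two smaller imprecisions: the ``new $F$-values'' at step $t$ are $\mathrm{cyc}(\eta)$-many cycle representatives, not a fixed set of size $m\ell_{k_t}-s_{k_t}$ on ``the uncovered part of $R_{k_t}$''; and $\beta$ is constrained to send $C_{k_t}^+$ into $R_{k_t}\setminus\iota_t(\cdots)$, so the sum over $\beta$ is over a subset of $S_{m\ell}$ --- that is fine for an upper bound, but should be said.
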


\begin{proof}
The map $\left(F,\varSigma\right)\mapsto\left(F\circ\pi_{F}\pi'_{F},\varSigma\circ T{}^{-1}\pi_{F}\pi'_{F}T\right)$
is a bijection between $Z$ and $W$, giving the first equality. Clearly,
$\left|W\right|\leq\left|W'\right|$. 

In order to prove the last inequality, we use the map $\Phi_{+}:W'\rightarrow\left\{ f:C_{+}\rightarrow[d]\right\} $,
sending $(F,\varSigma)\in W'$ to $F|_{C_{+}}$. We estimate $\left|W'\right|$
by analyzing the fibers of $\Phi_{+}$. Let $f\in\Phi_{+}(W')$ and
suppose it has a shape $\nu_{+}$ (see Definition \ref{def:shape of a function}).
We write $\nu_{j,+}$ for the shapes of $f|_{C_{j}^{+}}$. We reveal
$(F,\varSigma)\in\Phi_{+}^{-1}(f)$ row by row, starting with the
$-r$-th row $R_{-r}$ and making sure that, in each step, $F\circ T|_{T^{-1}(R_{k})}=F\circ\varSigma|_{T^{-1}(R_{k})}$,
or, equivalently, $F|_{R_{k}}=F\circ\eta_{k}^{-1}|_{R_{k}}$.
\begin{enumerate}
\item There are at most $(m\ell_{-r})!$ options for $\eta_{-r}$. Note
that $R_{-r}\subseteq C_{+}$ and hence by (\ref{eq:W'}), the choice
of $\eta_{-r}$ determines $F|_{C_{-r}}$. At this point, $F|_{R_{-r+1}}$
is determined as well. 
\item Note that $C_{-r+1}^{+}=V_{-r,-r+1}$. There are at most:
\begin{enumerate}
\item $\binom{m\ell_{-r+1}}{m\ell_{-r,-r+1}}$ options for the sets $\eta_{-r+1}(C_{-r+1}^{+})$
and $\eta_{-r+1}(C_{-r+1}\backslash C_{-r+1}^{+})$. 
\item $\left(m(\ell_{-r+1}-\ell_{-r,-r+1})\right)!$ options for $\eta_{-r+1}|_{C_{-r+1}\backslash C_{-r+1}^{+}}:C_{-r+1}\backslash C_{-r+1}^{+}\rightarrow\eta_{-r+1}(C_{-r+1}\backslash C_{-r+1}^{+})$. 
\item $\left(\nu_{-r+1,+}\right)!$ options for $\eta_{-r+1}:C_{-r+1}^{+}\rightarrow\eta_{-r+1}(C_{-r+1}^{+})$.
\end{enumerate}
\item More generally, assume, by induction, that we have fixed $\left(\eta_{i}\right)_{i<k}$,
and, thus, we have already determined $F|_{R_{i}}$ for $i\leq k$,
$F|_{C_{i}}$ for $i<k$, and $F|_{C_{+}}$. Then there are at most:
\begin{enumerate}
\item $\binom{m\ell_{k}}{\sum_{i<k}m\ell_{i,k}}$ options for the sets $\eta_{k}(C_{k}^{+})$
and $\eta_{k}(C_{k}\backslash C_{k}^{+})$. 
\item $\left(m(\ell_{k}-\sum_{i<k}\ell_{i,k})\right)!$ options for $\eta_{k}|_{C_{k}\backslash C_{k}^{+}}:C_{k}\backslash C_{k}^{+}\rightarrow\eta_{k}(C_{k}\backslash C_{k}^{+})$. 
\item $\left(\nu_{k,+}\right)!$ options for $\eta_{k}|_{C_{k}^{+}}:C_{k}^{+}\rightarrow\eta_{k}(C_{k}^{+})$. 
\end{enumerate}
\end{enumerate}
After choosing $\eta_{-r},\ldots,\eta_{r}$, we have determined $F$.
Furthermore, since $\sum_{0\neq k=-r}^{r}\nu_{k,+}=\nu_{+}$, we have
$\prod_{0\neq k=-r}^{r}\left(\nu_{k,+}\right)!\leq\nu_{+}!$. Hence,
\begin{align}
\left|\Phi_{+}^{-1}(f)\right| & \leq\prod_{0\neq k=-r}^{r}\left(\binom{m\ell_{k}}{\sum_{i<k}m\ell_{i,k}}\left(m(\ell_{k}-\sum_{i<k}\ell_{i,k})\right)!\left(\nu_{k,+}\right)!\right)\nonumber \\
 & =\prod_{0\neq k=-r}^{r}\left(\nu_{k,+}\right)!\frac{\left(m\ell_{k}\right)!}{\left(\sum_{i<k}m\ell_{i,k}\right)!}\leq\nu_{+}!\prod_{0\neq k=-r}^{r}\frac{\left(m\ell_{k}\right)!}{\left(\sum_{i<k}m\ell_{i,k}\right)!}.\label{eq:bound on the fibers}
\end{align}
Since $\left|C_{+}\right|=m\ell$, we have 
\begin{equation}
\left|\left\{ f\in\Phi_{+}(W'):f\text{ is of shape }\nu_{+}\right\} \right|\leq\frac{(m\ell)!}{\nu_{+}!},\label{eq:bound on the size of the Borel}
\end{equation}
and there are at most $\binom{d+m\ell}{m\ell}$ possible shapes $\nu_{+}$.
Combining (\ref{eq:bound on the fibers}) and (\ref{eq:bound on the size of the Borel})
we conclude:
\begin{align*}
\left|W'\right| & \leq\sum_{\nu_{+}}\left|\left\{ f\in\Phi_{+}(W'):f\text{ is of shape }\nu_{+}\right\} \right|\cdot\left|\Phi_{+}^{-1}(f)\right|\\
 & \leq\sum_{\nu_{+}}\frac{(m\ell)!}{\nu_{+}!}\cdot\nu_{+}!\prod_{0\neq k=-r}^{r}\frac{\left(m\ell_{k}\right)!}{\left(\sum_{i<k}m\ell_{i,k}\right)!}\leq\binom{d+m\ell}{m\ell}(m\ell)!\prod_{0\neq k=-r}^{r}\frac{\left(m\ell_{k}\right)!}{\left(\sum_{i<k}m\ell_{i,k}\right)!}.\qedhere
\end{align*}
\end{proof}

\section{\label{sec:Proof-of-main theorems}Proof of Theorems \ref{thm:main_thm_fundemental_rep}
and \ref{thm:abs_value_of_trace}}

In this section we use the results of Sections \ref{sec:Reduction-of-Theorem},
\ref{sec:Estimating-the-contribution} and \ref{sec:Estimates-on}
to prove Theorems \ref{thm:main_thm_fundemental_rep} and \ref{thm:abs_value_of_trace}.
We end the section with the proof of Theorem \ref{thm:LST theorem}.
\begin{proof}[Proof of Theorem \ref{thm:abs_value_of_trace}]
Assume that $d=am$ for $a\geq\ell\geq2$. By (\ref{eq:Striling on binomial coefficients}),
we have:
\begin{equation}
\binom{d}{m\ell}=\binom{am}{m\ell}\geq\frac{a^{m\ell}}{\ell^{m\ell}},\label{eq:doverm}
\end{equation}
\begin{equation}
\binom{d+m\ell}{m\ell}\leq\left(\frac{a+\ell}{\ell}\right)^{m\ell}e^{m\ell}\leq\frac{a^{m\ell}(2e)^{m\ell}}{\ell^{m\ell}}.\label{eq:dover_ml}
\end{equation}
We remind the reader the definition of $\ell_{i}$ and $\ell_{i,j}$
in (\ref{eq:l_ij}). Concretely, for each $0\neq i,j\in[-r,r]$, $\ell_{i}$
is the combined number of appearances of the letter $x_{i}$ (with
the convention that $x_{-i}=x_{i}^{-1}$) in $w$ and $w^{-1}$, and
$\ell_{i,j}$ is the combined number of appearances of the string
``$x_{i}x_{j}^{-1}$'' in $w$ and in $w^{-1}$. In particular,
we have $\sum_{i=1}^{r}\ell_{i}=\ell$, $\ell_{i,i}=0$ and $\sum_{0\neq i\in[-r,r]}\ell_{i,k}=\ell_{k}$
and therefore:
\begin{equation}
\prod_{i=1}^{r}d\cdots(d-m\ell_{i}+1)\geq d\cdots(d-m\ell+1)\text{\,\,\, and \,\,\,}\frac{\left(m\ell_{k}\right)!}{\left(\sum_{i<k}m\ell_{i,k}\right)!\left(\sum_{i>k}m\ell_{i,k}\right)!}=\binom{m\ell_{k}}{\sum_{i>k}m\ell_{i,k}}\leq2^{m\ell_{k}}.\label{eq:auxilary computations}
\end{equation}
By Corollary \ref{cor:reduction to L-orbits on Z}, Proposition \ref{prop:estimates on Z}
and by (\ref{eq:auxilary computations}), (\ref{eq:doverm}) and (\ref{eq:dover_ml}),
we obtain:
\begin{align*}
 & \mathbb{E}\left(\left|\tr\left(\bigwedge\nolimits ^{m}w(X_{1},\ldots,X_{r})\right)\right|^{2}\right)\leq\frac{\left|Z\right|}{m!^{\ell}}\prod_{i=1}^{r}\frac{1}{d\cdots(d-m\ell_{i}+1)}\\
\leq & \binom{d+m\ell}{m\ell}\cdot\frac{(m\ell)!}{\prod_{i=1}^{r}d\cdots(d-m\ell_{i}+1)}\cdot\frac{1}{m!^{\ell}}\cdot\prod_{0\neq k=-r}^{r}\frac{\left(m\ell_{k}\right)!}{\left(\sum_{i<k}m\ell_{i,k}\right)!}\\
\leq & \binom{d+m\ell}{m\ell}\cdot\frac{(m\ell)!}{d\cdots(d-m\ell+1)}\cdot\frac{\prod_{0\neq k=-r}^{r}\left(\sum_{i>k}m\ell_{i,k}\right)!}{m!^{\ell}}\cdot\prod_{0\neq k=-r}^{r}\frac{\left(m\ell_{k}\right)!}{\left(\sum_{i<k}m\ell_{i,k}\right)!\left(\sum_{i>k}m\ell_{i,k}\right)!}\\
\leq & \binom{d+m\ell}{m\ell}\cdot\binom{d}{m\ell}^{-1}\cdot\frac{(m\ell)!}{m!^{\ell}}\cdot\prod_{0\neq k=-r}^{r}2^{m\ell_{k}}\leq(2e)^{m\ell}\ell^{m\ell}\cdot2^{2m\ell}\leq(8e\ell)^{m\ell}\leq(22\ell)^{m\ell}.
\end{align*}
Finally, note that if $d\geq(22\ell)^{\ell}m$ then $(22\ell)^{m\ell}\leq(\frac{d}{m})^{m}\leq\binom{d}{m}$.
\end{proof}
We now turn to the proof of Theorem \ref{thm:main_thm_fundemental_rep}.
We first deal with the case when the rank is bounded (and prove Conjecture
\ref{conj:Fourier coefficients of compact Lie groups} in this case)
and then prove Theorem \ref{thm:main_thm_fundemental_rep} in the
unbounded case. 
\begin{defn}
\label{def:Concatination of words}Given $w_{1}\in F_{r_{1}}$ and
$w_{2}\in F_{r_{2}}$, we denote by $w_{1}*w_{2}\in F_{r_{1}+r_{2}}$
their \emph{concatenation. For example, if $w=[x,y]$, then $w*w=[x,y]\cdot[z,w]$.}
\end{defn}

We remind the reader the for a compact group $G$, and a word $w\in F_{r}$,
we denote by $\tau_{w,G}:=(w_{G})_{*}(\mu_{G}^{r})$ the word measure
associated to $w$ and $G$, and the Fourier coefficient of $\tau_{w,G}$
at $\rho\in\mathrm{Irr}(G)$ is $a_{w,G,\rho}:=\int_{G^{r}}\rho(w(x_{1},\ldots,x_{r}))\mu_{G}^{r}=\int_{G}\rho(y)\tau_{w,G}$.
If $G$ is a compact connected semisimple Lie group, by \cite{Bor83},
the map $w_{G}:G^{r}\rightarrow G$ is a submersion outside a proper
subvariety in $G^{r}$. It follows that in this case, or e.g.~when
$G$ is a finite group, $\tau_{w,G}$ is absolutely continuous with
respect to $\mu_{G}$, and we can write $\tau_{w,G}=f_{w,G}\cdot$$\mu_{G}$,
with $f_{w,G}\in L^{1}(G)$. Since $\tau_{w,G}$ is conjugate invariant,
$f_{w,G}$ is a class function, and it can be written as a linear
combination of characters $f_{w,G}=\sum_{\rho\in\mathrm{Irr}(G)}\overline{a_{w,G,\rho}}\cdot\rho$. 

By Definition \ref{def:Concatination of words}, we see that $\tau_{w_{1}*w_{2},G}=\tau_{w_{1},G}*\tau_{w_{2},G}$
for every $w_{1}\in F_{r_{1}}$ and $w_{2}\in F_{r_{2}}$. Since $\rho_{1}*\rho_{2}=\frac{\delta_{\rho_{1},\rho_{2}}}{\rho_{1}(1)}\cdot\rho_{1}$
for every $\rho_{1},\rho_{2}\in\mathrm{Irr}(G)$, we have:
\begin{equation}
a_{w_{1}*w_{2},G,\rho}=\int_{G}\rho(g)\tau_{w_{1}*w_{2},G}(g)=\int_{G}\rho(g)\tau_{w_{1},G}*\tau_{w_{2},G}(g)=\frac{a_{w_{1},G,\rho}\cdot a_{w_{2},G,\rho}}{\rho(1)}.\label{eq:Fourier coefficients of convolutions of words}
\end{equation}

\begin{prop}
\label{prop:equidistribution- finite rank}For every $1\neq w\in F_{r}$
and $d\in\N$, there exists $\epsilon(d,w)>0$ such that:
\begin{enumerate}
\item For every compact connected semisimple Lie group $G$ of rank $d$
and every $\rho\in\mathrm{Irr}(G)$, we have $\left|a_{w,G,\rho}\right|\leq\rho(1)^{1-\epsilon(d,w)}$.
\item In particular, for every $1\leq m\leq d$,
\[
\mathbb{E}_{\U_{d}}\left(\left|\tr\left(\bigwedge\nolimits ^{m}w(X_{1},\ldots,X_{r})\right)\right|^{2}\right)=\mathbb{E}_{\mathrm{SU}_{d}}\left(\left|\tr\left(\bigwedge\nolimits ^{m}w(X_{1},\ldots,X_{r})\right)\right|^{2}\right)\leq\binom{d}{m}^{2(1-\epsilon(d,w))}.
\]
\end{enumerate}
\end{prop}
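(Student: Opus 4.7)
The approach is to combine a strict-inequality argument (effective for representations of bounded dimension) with the (FRS) bounded-density machinery (for representations of large dimension), exploiting that $d$ is fixed. The equality in part (2) is a consequence of central invariance: writing each $X_i \in \U_d$ as $z_i Y_i$ with $z_i \in Z(\U_d) = S^1 \cdot I$ and $Y_i \in \mathrm{SU}_d$, we have $w(X_1, \ldots, X_r) = \bigl(\prod_i z_i^{a_i}\bigr) w(Y_1, \ldots, Y_r)$, where $a_i$ is the total $x_i$-exponent in $w$; since $\tr(\bigwedge^m(\lambda M)) = \lambda^m \tr(\bigwedge^m M)$, the absolute square is invariant under the central phase and the Haar averages over $\U_d^r$ and $\mathrm{SU}_d^r$ agree.

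I would next establish the strict inequalities $|a_{w,\mathrm{SU}_d,\rho}| < \rho(1)$ and $\mathbb{E}(|\chi_\rho(w)|^2) < \rho(1)^2$ for every non-trivial $w$ and every non-trivial $\rho \in \mathrm{Irr}(\mathrm{SU}_d)$, which follow from (i) the absolute continuity of $\tau_w$ on $\mathrm{SU}_d$ for $w \neq 1$, (ii) $|\chi_\rho(g)| \leq \rho(1)$ with equality iff $\rho(g)$ is scalar, and (iii) $\mathrm{SU}_d$ has no non-trivial one-dimensional representation, so a non-trivial $\rho$ cannot be scalar on an open subset. For part (2), applying the second inequality to $\rho = \bigwedge^m \C^d$ for each $m \in [d]$ (a \emph{finite} set) and taking the minimum of the $d$ resulting individual exponents $\epsilon_m > 0$ yields a uniform $\epsilon(d, w)$.

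For part (1) the strict-inequality argument alone is insufficient since $\mathrm{Irr}(\mathrm{SU}_d)$ is infinite. For representations with $\rho(1)$ large, I would invoke the (FRS) framework: by \cite{GH21} and the quantitative \cite[Corollary 1.9]{GHS}, there exists $N = N(d, w)$ such that $w_{\mathrm{SU}_d}^{*N}$ is (FRS), and by the Archimedean \cite{Rei} extension of \cite[Theorem 3.4]{AA16}, the measure $\tau_{w, \mathrm{SU}_d}^{*N}$ has density $f_N$ with $\|f_N\|_\infty \leq C(d, w)$. Setting $A_\rho := \mathbb{E}[\rho(w(X_1, \ldots, X_r))] \in \mathrm{End}(V_\rho)$, Plancherel applied to $f_N$ yields
\[
\rho(1) \cdot \|A_\rho^N\|_{\mathrm{HS}}^2 \leq \|f_N\|_2^2 \leq \|f_N\|_\infty \|f_N\|_1 = C,
\]
so every eigenvalue $\lambda$ of $A_\rho$ (being an $N$-th root of an eigenvalue of $A_\rho^N$) satisfies $|\lambda|^{2N} \leq C/\rho(1)$, hence $|a_{w,\mathrm{SU}_d,\rho}| = |\tr(A_\rho)| \leq \rho(1)(C/\rho(1))^{1/(2N)} = C^{1/(2N)} \rho(1)^{1-1/(2N)}$. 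This is $\leq \rho(1)^{1-\epsilon}$ for any $\epsilon < 1/(2N)$ once $\rho(1)$ exceeds an explicit threshold in $C, N, \epsilon$. By finiteness of dominant weights of $\mathrm{SU}_d$ with bounded Weyl dimension, only finitely many non-trivial $\rho \in \mathrm{Irr}(\mathrm{SU}_d)$ fall below this threshold; the strict inequality from the second paragraph provides an $\epsilon_\rho > 0$ for each such $\rho$, and the minimum over this finite set together with the large-$\rho(1)$ exponent gives the uniform $\epsilon(d, w)$ for part (1). The main obstacle is the (FRS) input \cite{GH21, GHS} for iterated self-convolutions of word maps on semisimple algebraic groups and its Archimedean transfer via \cite{Rei}; once these are black-boxed, the Plancherel-plus-eigenvalue calculation is short.
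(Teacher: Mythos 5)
Your proposal is correct, and it reaches the same conclusion as the paper by a genuinely different route at the key step, so let me compare the two.

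The equality in Item (2) is unargued in the paper's proof (it appears as the equality in display (1.6)); your central-invariance argument is the right justification, with the small caveat that the decomposition $X_i = z_i Y_i$ with $z_i \in S^1 \cdot I$, $Y_i \in \mathrm{SU}_d$ is only unique modulo $\mu_d$, so one should integrate over $(S^1 \times \mathrm{SU}_d)/\mu_d$; since the integrand is $\mu_d$-invariant this is harmless.

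The real divergence is in Item (1) for $\rho$ of large dimension. The paper does \emph{not} pass through the (FRS)-of-convolutions machinery. Instead it cites \cite[Theorem 1.1]{GHS} directly for the integrability bound $f_{w,d}\in L^{1+\epsilon'}(\mathrm{SU}_d)$, then applies Young's convolution inequality to get $f_{w,d}^{*t_0}\in L^\infty$ for $t_0 = \lceil (1+\epsilon')/\epsilon' \rceil$, and — after replacing $w$ by $w*w^{-1}$ and using the identity $a_{w*w^{-1},\rho}=|a_{w,\rho}|^2/\rho(1)$ to guarantee that all Fourier coefficients are nonnegative — evaluates the convolution power at the identity to get $\sum_\rho \rho(1)^{2-t_0}a_\rho^{t_0}<\infty$, which immediately forces $a_\rho < \rho(1)^{1-2/t_0}$ for all but finitely many $\rho$. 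The finitely many exceptions are handled via the It\^o--Kawada equidistribution theorem. Your route instead invokes the (FRS) criterion for $w^{*N}$ via \cite{GH21} and \cite[Corollary 1.9]{GHS}, transfers to the compact real form via \cite{Rei}, and then runs Plancherel plus a spectral-radius estimate on $A_\rho^N$. This works, but is somewhat heavier machinery; a minor remark is that $A_\rho$ is automatically a scalar operator (Schur's lemma, since $\tau_{w,\mathrm{SU}_d}$ is conjugation-invariant), so the eigenvalue discussion collapses to $A_\rho = (a_\rho/\rho(1))I$, and you actually get the slightly better exponent $1-1/N$ rather than $1-1/(2N)$. Your direct argument for the strict inequality $|a_{w,\mathrm{SU}_d,\rho}| < \rho(1)$ for nontrivial $\rho$ (via absolute continuity of $\tau_{w,\mathrm{SU}_d}$, the fact that $|\chi_\rho(g)|=\rho(1)$ forces $\rho(g)$ scalar, the scalar locus being a closed subgroup, connectedness of $\mathrm{SU}_d$, and perfectness) is essentially the proof of the It\^o--Kawada conclusion the paper cites, so that piece is in agreement. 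The trade-off: the paper's $w*w^{-1}$ trick plus Young's inequality is more elementary once the single $L^{1+\epsilon}$ input is granted; your Plancherel argument has the advantage of not needing nonnegativity of Fourier coefficients, at the cost of relying on the full (FRS)-of-convolutions input and its Archimedean transfer, which the paper keeps as discussion rather than proof.
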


\begin{proof}
We first prove Item (1). Fix $w\in F_{r}$ and a compact connected
semisimple Lie group $G$. Let $\tau_{w,G}=f_{w,G}\mu_{G}$ be the
word measure. By (\ref{eq:Fourier coefficients of convolutions of words}),
and since $a_{w^{-1},G,\rho}=\overline{a_{w,G,\rho}}$ for each $\rho\in\mathrm{Irr}(G)$,
we have
\begin{equation}
a_{w*w^{-1},G,\rho}=\frac{\left|a_{w,G,\rho}\right|^{2}}{\rho(1)}.\label{eq:reduction to words with positive Fourier coefficients}
\end{equation}
Replacing $w$ by $w*w^{-1}$, we may assume that all Fourier coefficients
$a_{w,G,\rho}$ are in $\R_{\geq0}$.

It follows from \cite[Theorem 1.1]{GHS} that $f_{w,G}\in L^{1+\epsilon'}(G)$
for some $\epsilon'=\epsilon'(G,w)>0$. By Young's convolution inequality,
it follows that $f_{w,G}^{*t}\in L^{\infty}(G)$ for all $t\geq t_{0}(G,w):=\left\lceil \frac{1+\epsilon'(G,w)}{\epsilon'(G,w)}\right\rceil $
(see e.g.~\cite[Section 1.1, end of p.3]{GHS}). In particular, by
(\ref{eq:Fourier coefficients of convolutions of words}), we deduce
that: 
\[
f_{w,G}^{*t_{0}}(1)=\sum_{\rho\in\mathrm{Irr}(G)}\rho(1)^{2-t_{0}}a_{w,G,\rho}^{t_{0}}<\infty.
\]
Since $a_{w,G,\rho}\geq0$, we deduce that $a_{w,G,\rho}<\rho(1)^{1-\frac{2}{t_{0}(G,w)}}$
for all but finitely many $\rho\in\mathrm{Irr}(G)$. To deal with
the remaining finitely many (non-trivial) representations of $G$,
we simply use the bound $a_{w,G,\rho}<\rho(1)$, which follows e.g.~by
the It\^{o}\textendash Kawada equidistribution theorem \cite{IK40}
(see also \cite[Theorem 4.6.3]{App14}), since $\mathrm{Supp}(\tau_{w,G})$
generates $G$. Since there are only finitely many compact semisimple
connected Lie groups of rank $d$, this implies Item (1). 

Note that the character $\rho_{(1^{m})}\otimes\rho_{(1^{m})}^{\vee}$
of the representation $\bigwedge\nolimits ^{m}\C^{d}\otimes\left(\bigwedge\nolimits ^{m}\C^{d}\right)^{\vee}$
of $\mathrm{SU}_{d}$ is given by $\left|\tr\left(\bigwedge\nolimits ^{m}(A)\right)\right|^{2}$.
Since $\rho_{(1^{m})}\otimes\rho_{(1^{m})}^{\vee}$ is a sum of irreducible
characters, by applying the It\^{o}\textendash Kawada equidistribution
theorem to each irreducible character, for each $1\leq m\leq d$,
we have
\[
\mathbb{E}_{\mathrm{SU}_{d}}\left(\left|\tr\left(\bigwedge\nolimits ^{m}(w(X_{1},\ldots,X_{r}))\right)\right|^{2}\right)=\mathbb{E}_{\mathrm{SU}_{d}}\left(\rho_{(1^{m})}\otimes\rho_{(1^{m})}^{\vee}(w(X_{1},\ldots,X_{r}))\right)<\rho_{(1^{m})}\otimes\rho_{(1^{m})}^{\vee}(1)=\binom{d}{m}^{2}.
\]
Since there are only finitely many such $m$'s, this implies Item
(2).
\end{proof}
Theorem \ref{thm:main_thm_fundemental_rep} now follows from Proposition
\ref{prop:equidistribution- finite rank} and the following Theorem. 
\begin{thm}
\label{thm:bounds on absolute value of trace large d}For every $\ell\in\N$,
there exist $\epsilon(\ell),C(\ell)>0$ such that, for every $d\geq C(\ell)$,
every $1\leq m\leq d$, and every word $w\in F_{r}$ of length $\ell$,
one has: 
\[
\mathbb{E}\left(\left|\tr\left(\bigwedge\nolimits ^{m}w(X_{1},\ldots,X_{r})\right)\right|^{2}\right)\leq\binom{d}{m}^{2(1-\epsilon(\ell))}.
\]
\end{thm}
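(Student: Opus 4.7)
I would combine Theorem~\ref{thm:abs_value_of_trace} with the classical duality $|c_m(U)| = |c_{d-m}(U^{-1})|$ for unitary $U$, which follows from the identity $c_{d-m}(U) = \det(U)\cdot c_m(U^{-1})$ together with $|\det U|=1$. Applied pointwise to $U=w(X_1,\ldots,X_r)$ and using $w(X)^{-1}=w^{-1}(X)$, this yields
\[
\mathbb{E}\left|c_m(w(X))\right|^2 \;=\; \mathbb{E}\left|c_{d-m}(w^{-1}(X))\right|^2.
\]
Since $w^{-1}$ has length $\ell$ and $\binom{d}{m}=\binom{d}{d-m}$, it suffices to establish the bound for $1\leq m\leq d/2$. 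Following Remark~\ref{rem:main_explicit_dependence}(1), I would aim for $\epsilon(\ell):=\frac{1}{72}(25\ell)^{-2\ell}$ and $C(\ell):=(25\ell)^{7\ell}$, splitting the range $1\leq m\leq d/2$ into regimes.

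For $m \leq d/(25\ell)^\ell$, the second conclusion of Theorem~\ref{thm:abs_value_of_trace} gives $\mathbb{E}|c_m|^2 \leq \binom{d}{m}$, which is immediately at most $\binom{d}{m}^{2(1-\epsilon(\ell))}$ since $2(1-\epsilon(\ell))\geq 1$. For moderately larger $m$, the first conclusion $\mathbb{E}|c_m|^2 \leq (25\ell)^{m\ell}$ (valid as long as $m\leq d/\ell$) suffices provided $(25\ell)^{m\ell}\leq\binom{d}{m}^{2(1-\epsilon(\ell))}$. Using the elementary bound $\binom{d}{m}\geq (d/m)^m$ (valid for $m\leq d/2$) and taking logarithms, this reduces to $d/m \geq e\cdot(25\ell)^{\ell/(2(1-\epsilon(\ell)))}$; with our tiny $\epsilon(\ell)$, the right-hand side is essentially $(25\ell)^{\ell/2}$. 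The assumption $d\geq C(\ell)=(25\ell)^{7\ell}$ then comfortably handles every $m\leq d/(25\ell)^{\ell/2}$.

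The hard part will be the remaining middle range $d/(25\ell)^{\ell/2} < m \leq d/2$, which is nonempty once $\ell\geq 3$. Here Theorem~\ref{thm:abs_value_of_trace} either does not apply ($m>d/\ell$ makes its hypothesis $d\geq m\ell$ fail) or is too weak, as $(25\ell)^{m\ell}$ can exceed even the trivial $\binom{d}{m}^2$. To handle this range I would return to the Weingarten analysis of Corollary~\ref{cor:reduction to L-orbits on Z} and the counting bound of Proposition~\ref{prop:estimates on Z} and sharpen them by exploiting the truncation $\ell(\lambda)\leq d$ in~\eqref{eq:formula for Weingarten}: once $m\ell_i > d$, many Young diagrams $\lambda\vdash m\ell_i$ with at most $\ell_i$ columns necessarily satisfy $\ell(\lambda)>d$ and drop out of the sum. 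Combined with the fact that $\binom{d}{m}$ is close to its maximum $\binom{d}{\lfloor d/2\rfloor}$ throughout this regime, so $\binom{d}{m}^{2\epsilon(\ell)}$ is an enormous factor of slack, this refinement should close the remaining gap. Assembling the three regimes and invoking the duality step to lift from $m\leq d/2$ to all $m\in[1,d]$ then yields the uniform bound $\mathbb{E}|c_m(w)|^2 \leq \binom{d}{m}^{2(1-\epsilon(\ell))}$ for every $d\geq C(\ell)$.
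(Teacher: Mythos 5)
Your reduction to $m\leq d/2$ and your treatment of the range $m\leq d/(25\ell)^{\ell}$ (and its slight extension using $(25\ell)^{m\ell}\leq\binom{d}{m}^{2(1-\epsilon)}$ when $d/m$ is large) are fine and match the easy part of the paper's argument, which invokes Theorem \ref{thm:abs_value_of_trace} after using $\bigwedge^{m}V\simeq\left(\bigwedge^{d-m}V\right)^{\vee}\otimes\chi_{\det}$. But the heart of the theorem is exactly the middle range $\delta(\ell)d\lesssim m\leq d/2$, and there your proposal contains no proof: it only asserts that sharpening Corollary \ref{cor:reduction to L-orbits on Z} and Proposition \ref{prop:estimates on Z} by exploiting the truncation $\ell(\lambda)\leq d$ in \eqref{eq:formula for Weingarten} ``should close the remaining gap.'' This is where the argument would fail as stated. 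Quantitatively, in that regime the orbit-plus-counting bound delivers a quantity of size roughly $(25\ell)^{m\ell}$, which for $m$ comparable to $d$ exceeds even the trivial bound $\binom{d}{m}^{2}\leq 4^{d}$ by a factor exponential in $d$ with a much larger base, whereas the ``slack'' $\binom{d}{m}^{2\epsilon(\ell)}$ is only about $2^{2\epsilon(\ell)d}$ with $\epsilon(\ell)\approx(25\ell)^{-2\ell}$; the slack is exponentially smaller than the overshoot, so it cannot absorb it. Moreover, the key estimate behind Corollary \ref{cor:reduction to L-orbits on Z}, namely $\prod_{(a,b)\in\lambda}(d+b-a)\geq d(d-1)\cdots(d-m\ell_{i}+1)$, degenerates once $m\ell_{i}>d$, and the count of $\left|Z\right|$ carries factors of size $(m\ell)!$; no mere bookkeeping of which $\lambda$ survive the truncation is known to rescue these bounds when $m$ is proportional to $d$.

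The paper handles this range by an entirely different mechanism, which is the missing idea in your plan. One first replaces $w$ by $w*w^{-1}$ so that all Fourier coefficients $a_{w,\U_{d},\rho}$ are non-negative, and then passes to the convolution power $w^{*9}$, using $a_{w^{*t},\U_{d},\rho}=a_{w,\U_{d},\rho}^{t}/\rho(1)^{t-1}$ to upgrade the bound $\mathbb{E}\left(\rho_{\lambda_{c}}\circ w\right)\leq\rho_{\lambda_{c}}(1)^{2/3}$ (valid for $c\leq\delta(\ell)d$ by Theorem \ref{thm:abs_value_of_trace}) into $\mathbb{E}\left(\left|\tr\left(\bigwedge^{\delta(\ell)d}w^{*9}\right)\right|^{2}\right)\leq2$. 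Positivity then makes expectations monotone under the embedding of $\bigwedge^{m}V\otimes\left(\bigwedge^{m}V\right)^{\vee}$ into the tensor product of the corresponding pairs at levels $\delta(\ell)d$ and $m-\delta(\ell)d$; bounding the second factor trivially by $\binom{d}{m-\delta(\ell)d}^{2}$ and converting this to $\binom{d}{m}^{2-2\delta(\ell)^{2}}$ via the binary-entropy estimate (Lemma \ref{lem:Technical lemma}) gives the gain, and a final step transfers the bound from $w^{*9}$ back to $w$. None of these ingredients (positivity via $w*w^{-1}$, convolution powers, the subrepresentation monotonicity, the entropy comparison of binomial coefficients) appear in your proposal, so the middle range remains a genuine gap rather than a routine refinement.
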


In order to prove Theorem \ref{thm:bounds on absolute value of trace large d},
we need the following technical lemma. 
\begin{lem}
\label{lem:Technical lemma}Let $H(x)=-x\log(x)-(1-x)\log(1-x)$ be
the binary entropy function. Then:
\begin{enumerate}
\item For every $d\in\N$ and every $0<x<1$ such that $dx\in\N$, we have
$\frac{2^{dH(x)}}{\sqrt{8dx(1-x)}}\leq\binom{d}{xd}\leq\frac{2^{dH(x)}}{\sqrt{\pi dx(1-x)}}\leq2^{dH(x)}$.
\item Let $0<\delta\leq\frac{1}{2}$. Then for every $b\in[\delta,\frac{1}{2}]$,
$a\in[\delta,b]$, and $d>\frac{1}{\delta^{4}}$ such that $bd,ad,d$
are integers, one has: 
\[
\binom{d}{(b-a)d}\leq\binom{d}{bd}^{1-\delta^{2}}.
\]
\end{enumerate}
\end{lem}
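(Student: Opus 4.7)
Part (1) is a direct consequence of Stirling's inequality in its two-sided form $\sqrt{2\pi n}(n/e)^n \leq n! \leq \sqrt{2\pi n}(n/e)^n e^{1/(12n)}$. Applying this to $\binom{d}{xd}=d!/((xd)!((1-x)d)!)$, the $(n/e)^n$ pieces combine to $2^{dH(x)}$ and the $\sqrt{2\pi n}$ factors combine to the $\sqrt{2\pi d x(1-x)}^{-1}$-type prefactor, giving both stated inequalities after absorbing the bounded $e^{1/(12n)}$-corrections into the constants $8$ and $\pi$. The final ``$\leq 2^{dH(x)}$'' is immediate from $\pi dx(1-x) \geq 1$.

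For Part (2), I would combine the two bounds of Part (1):
\[
\log_2\binom{d}{(b-a)d}\leq dH(b-a),\qquad \log_2\binom{d}{bd}\geq dH(b)-\tfrac12\log_2(8db(1-b)).
\]
Since $b(1-b)\leq 1/4$, it suffices to show
\begin{equation}\label{eq:goal}
(1-\delta^2)dH(b)-dH(b-a)\;\geq\;\tfrac{1}{2}\log_2(2d).
\end{equation}
The case $a=b$ is trivial, so assume $a<b$. Because $H$ is increasing on $[0,1/2]$ and $a\geq\delta$, one has $H(b-a)\leq H(b-\delta)$, so it is enough to lower-bound the function $g(b):=H(b)-H(b-\delta)-\delta^2 H(b)$ on $[\delta,1/2]$.

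The key step is the uniform lower bound
\begin{equation}\label{eq:keygap}
H(b)-H(b-\delta)\;\geq\;\frac{2\delta^2}{\ln 2}\qquad\text{for all }b\in[\delta,1/2].
\end{equation}
To prove \eqref{eq:keygap}, I would observe that $H'(x)=\log_2((1-x)/x)$ is \emph{decreasing} on $(0,1)$, so $\frac{d}{db}[H(b)-H(b-\delta)]=H'(b)-H'(b-\delta)\leq 0$ on $[\delta,1/2]$. Hence the minimum over $[\delta,1/2]$ is attained at the right endpoint $b=1/2$. There $H'(1/2)=0$, so by the second-order Taylor expansion $H(1/2-\delta)=1+\tfrac12 H''(\xi)\delta^2$ for some $\xi\in(1/2-\delta,1/2)$, combined with $|H''(x)|=1/(x(1-x)\ln 2)\geq 4/\ln 2$, gives $H(1/2)-H(1/2-\delta)\geq 2\delta^2/\ln 2$. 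Using $H(b)\leq 1$, \eqref{eq:keygap} yields $g(b)\geq (2/\ln 2 - 1)\delta^2 > 0.88\,\delta^2$, and so the left-hand side of \eqref{eq:goal} is at least $0.88\, d\delta^2$.

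It remains to verify $0.88\, d\delta^2\geq\tfrac12\log_2(2d)$ whenever $d>1/\delta^4$. Substituting $d=1/\delta^4$ the inequality becomes $0.88/\delta^2\geq\tfrac12(1+4\log_2(1/\delta))$, which is elementary for $\delta\in(0,1/2]$ (check $\delta=1/2$ and note the LHS grows like $\delta^{-2}$ while the RHS grows only logarithmically). Since $\frac{d}{dd}[0.88\, d\delta^2-\tfrac12\log_2(2d)]=0.88\,\delta^2-\tfrac{1}{2d\ln 2}$ is positive for $d>1/(1.76\,\delta^2\ln 2)$, a condition automatic when $d>1/\delta^4\geq 16$, the inequality persists for all larger $d$.

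\textbf{Main obstacle.} The delicate step is \eqref{eq:keygap}: the worst case $b=1/2$ is exactly where $H(b)$ is largest (so the target $\delta^2 H(b)$ is largest) \emph{and} where $H'=0$, so the $\delta$-gap only contributes through the quadratic term of $H$. This is why the result requires an exponent of the form $1-\delta^2$ rather than $1-c\delta$, and why a lower bound of the polynomial shape $d>\delta^{-4}$ is needed to dominate the Stirling correction $\tfrac12\log_2(2d)$.
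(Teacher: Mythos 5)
Your proof is correct, and it proves the same core inequality as the paper—namely a uniform lower bound of order $\delta^2$ on $H(b)-H(b-\delta)$ for $b\in[\delta,1/2]$—but you get there by a different route. The paper uses the explicit Taylor expansion of $H$ about $1/2$,
\[
H(x)=1-\frac{1}{2\ln 2}\sum_{n\geq1}\frac{(1-2x)^{2n}}{n(2n-1)},
\]
truncates at the first term, and writes $(1-2b+2\delta)^2-(1-2b)^2=4\delta^2+4\delta(1-2b)\geq 4\delta^2$; you instead argue that $H'$ is decreasing, hence $b\mapsto H(b)-H(b-\delta)$ attains its minimum at the endpoint $b=1/2$, and then apply the Lagrange form of Taylor's remainder with $\lvert H''\rvert\geq 4/\ln 2$. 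Both are sound; the paper's series calculation is a bit more self-contained, while your monotonicity reduction is more conceptual and makes it transparent that $b=1/2$ is the extremal case. The combination with Part (1) also diverges: you take logarithms, absorb the Stirling prefactor $\sqrt{8db(1-b)}$ as a $\tfrac12\log_2(2d)$ term, and then directly verify $0.88\,d\delta^2\geq\tfrac12\log_2(2d)$ for $d>\delta^{-4}$. The paper instead stays multiplicative and uses the clean identity $2^{-d\delta^2}=\bigl(2^{-dH(b)}\bigr)^{\delta^2/H(b)}\leq\binom{d}{bd}^{-\delta^2/H(b)}\leq\binom{d}{bd}^{-\delta^2}$ (using $H(b)\leq1$), paired with the short estimate $\frac{\log d}{d}\leq\frac{1}{\sqrt d}\leq\delta^2$. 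The paper's endgame is slightly tidier and avoids your explicit numerical check at $d=\delta^{-4}$ and $\delta=1/2$; on the other hand your version makes it more visible where the budget $\tfrac12\log_2(2d)$ is being spent. One small stylistic note: your claim that $0.88/\delta^2\geq\tfrac12(1+4\log_2(1/\delta))$ on $(0,1/2]$ is stated a bit informally; it does hold (the difference is in fact decreasing on $(0,1/2]$ with value $\approx 1.02$ at $\delta=1/2$), but you should say why no interior minimum can spoil the endpoint check.
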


\begin{proof}
Item (1) follows e.g.~from \cite[Lemma 17.5.1]{CT06}. The Taylor
series of $H(x)$ around $1/2$ is
\begin{equation}
H(x)=1-\frac{1}{2\ln2}\sum_{n=1}^{\infty}\frac{(1-2x)^{2n}}{n(2n-1)}.\label{eq:Taylor series of entropy}
\end{equation}
Since $H'(x)=\log(\frac{1-x}{x})$, $H(x)$ is monotone increasing
in $(0,1/2)$, and therefore, 
\begin{align*}
H(b)-H(b-a) & \geq H(b)-H(b-\delta)=\frac{1}{2\ln2}\left(\sum_{n=1}^{\infty}\frac{(1-2b+2\delta)^{2n}-(1-2b)^{2n}}{n(2n-1)}\right)\\
 & \geq\frac{1}{2\ln2}\left((1-2b+2\delta)^{2}-(1-2b)^{2}\right)=\frac{1}{2\ln2}\left(4\delta^{2}+4\delta(1-2b)\right)\geq2\delta^{2}.
\end{align*}
Since $d>\frac{1}{\delta^{4}}\geq16$, we have $\frac{\log(d)}{d}\leq\frac{1}{\sqrt{d}}\leq\delta^{2}$.
Combining with Item (1), we have: 
\begin{align*}
\binom{d}{(b-a)d} & \leq2^{dH(b-a)}\leq\sqrt{8db(1-b)}2^{d\left(H(b-a)-H(b)\right)}\binom{d}{bd}\leq2^{-2d\delta^{2}+\log(d)}\binom{d}{bd}\\
 & \leq2^{-d\delta^{2}}\binom{d}{bd}=\left(2^{-dH(b)}\right)^{\frac{\delta^{2}}{H(b)}}\binom{d}{bd}\leq\binom{d}{bd}^{1-\frac{\delta^{2}}{H(b)}}\leq\binom{d}{bd}^{1-\delta^{2}}.\qedhere
\end{align*}
\end{proof}
\begin{proof}[Proof of Theorem \ref{thm:bounds on absolute value of trace large d}]
Since $\bigwedge\nolimits ^{m}V\simeq\left(\bigwedge\nolimits ^{d-m}V\right)^{\vee}\otimes\chi_{\mathrm{det}}$,
we may assume that $2m\leq d$. Let $\delta(\ell):=(25\ell)^{-\ell}$,
let $C(\ell)=\delta(\ell)^{-7}$, and suppose that $d\geq C(\ell)$.
By Theorem \ref{thm:abs_value_of_trace}, we may assume that $d\leq\delta(\ell)^{-1}m$,
and, in particular, $m\geq\delta(\ell)^{-6}$. As in the proof of
Proposition \ref{prop:equidistribution- finite rank}, by replacing
$w$ by $w*w^{-1}$, we may assume that $a_{w,\U_{d},\rho}\in\R_{\geq0}$
for all $\rho\in\mathrm{Irr}(\U_{d})$. By Theorem \ref{thm:Littlewood-Richardson-unitary},
we have for all $c\leq\frac{d}{2}$:
\[
\bigwedge\nolimits ^{c}V\otimes\bigwedge\nolimits ^{c}V^{\vee}\simeq\left(\bigwedge\nolimits ^{c}V\otimes\bigwedge\nolimits ^{d-c}V\right)\otimes\chi_{\mathrm{det}}^{-1}\simeq\bigoplus_{j=0}^{c}V_{\lambda_{(j)}},
\]
where $\lambda_{(j)}=(1,\ldots,1,0,\ldots,0,-1,\ldots,-1)$, with
$-1$ and $1$ appearing $j$ times. Moreover, $V_{\lambda_{(c)}}$
is the largest irreducible subrepresentation of $\bigwedge\nolimits ^{c}V\otimes\bigwedge\nolimits ^{c}V^{\vee}$,
and we have $\rho_{\lambda_{(c)}}(1)\geq\frac{1}{c+1}\binom{d}{c}^{2}\geq\binom{d}{c}^{3/2}$.
By Theorem \ref{thm:abs_value_of_trace}, and since all $a_{w,\U_{d},\rho}$
are non-negative, if $c\leq\lceil\delta(\ell)d\rceil\leq(22\ell)^{-\ell}d$,
then
\[
\mathbb{E}\left(\rho_{\lambda_{(c)}}\circ w\right)\leq\sum_{j=0}^{c}\mathbb{E}\left(\rho_{\lambda_{(j)}}\circ w\right)=\mathbb{E}\left(\rho_{\bigwedge\nolimits ^{c}V\otimes\bigwedge\nolimits ^{c}V^{\vee}}\circ w\right)=\mathbb{E}\left(\left|\left(\rho_{\bigwedge\nolimits ^{c}V}\circ w\right)\right|^{2}\right)\leq\binom{d}{c}\leq\rho_{\lambda_{(c)}}(1)^{2/3}.
\]
Applying the last inequality for $w^{*9}$, recalling that $a_{w^{*t},\U_{d},\rho}=\frac{a_{w,\U_{d},\rho}^{t}}{\rho(1)^{t-1}}$
for all $\rho\in\mathrm{Irr}(\U_{d})$, we get
\begin{equation}
\mathbb{E}\left(\left|\tr\left(\bigwedge\nolimits ^{\lceil\delta(\ell)d\rceil}w^{*9}(X_{1},\ldots,X_{r})\right)\right|^{2}\right)=\sum_{j=0}^{\lceil\delta(\ell)d\rceil}\mathbb{E}\left(\rho_{\lambda_{(j)}}\circ w^{*9}\right)\leq\sum_{j=0}^{\lceil\delta(\ell)d\rceil}\rho_{\lambda_{(j)}}(1)^{-2}\leq\sum_{j=1}^{\infty}\frac{1}{j^{2}}<2.\label{eq:bound.on.small.wedge.power}
\end{equation}
Note that, for each $\delta(\ell)d\leq m\leq\frac{d}{2}$, $\bigwedge\nolimits ^{m}V$
is a subrepresentation of $\bigwedge\nolimits ^{\lceil\delta(\ell)d\rceil}V\otimes\bigwedge\nolimits ^{m-\lceil\delta(\ell)d\rceil}V$,
so
\[
\bigwedge\nolimits ^{m}V\otimes\left(\bigwedge\nolimits ^{m}V\right)^{\vee}\hookrightarrow\left(\bigwedge\nolimits ^{\lceil\delta(\ell)d\rceil}V\otimes\left(\bigwedge\nolimits ^{\lceil\delta(\ell)d\rceil}V\right)^{\vee}\right)\otimes\left(\bigwedge\nolimits ^{m-\lceil\delta(\ell)d\rceil}V\otimes\left(\bigwedge\nolimits ^{m-\lceil\delta(\ell)d\rceil}V\right)^{\vee}\right).
\]
Finally, by the positivity of the Fourier coefficients of $w$, by
(\ref{eq:bound.on.small.wedge.power}), by Lemma \ref{lem:Technical lemma}
(note that $m\geq\lceil\delta(\ell)d\rceil$) and by (\ref{eq:Striling on binomial coefficients})
(note that $\delta(\ell)^{2}m\geq1$),
\begin{align*}
\mathbb{E}\left(\left|\tr\left(\bigwedge\nolimits ^{m}w^{*9}(X_{1},\ldots,X_{r})\right)\right|^{2}\right) & \leq\mathbb{E}\left(\left|\tr\left(\bigwedge\nolimits ^{\lceil\delta(\ell)d\rceil}w^{*9}(X_{1},\ldots,X_{r})\right)\right|^{2}\left|\tr\left(\bigwedge\nolimits ^{m-\lceil\delta(\ell)d\rceil}w^{*9}(X_{1},\ldots,X_{r})\right)\right|^{2}\right)\\
 & \leq\mathbb{E}\left(\left|\tr\left(\bigwedge\nolimits ^{\lceil\delta(\ell)d\rceil}w^{*9}(X_{1},\ldots,X_{r})\right)\right|^{2}\right)\cdot\binom{d}{m-\lceil\delta(\ell)d\rceil}^{2}\\
 & \leq2\binom{d}{m-\lceil\delta(\ell)d\rceil}^{2}\leq\frac{d}{m}\binom{d}{m}^{2-2\delta(\ell)^{2}}\leq\binom{d}{m}^{2-\delta(\ell)^{2}}.
\end{align*}
By (\ref{eq:Striling on binomial coefficients}), $m+1\leq2^{2\sqrt{m}}\leq\binom{d}{m}^{2/\sqrt{m}}$
for each $m\leq\frac{d}{2}$. Hence,
\begin{equation}
\rho_{\lambda_{(m)}}(1)\geq\frac{1}{m+1}\binom{d}{m}^{2}\geq\binom{d}{m}^{2(1-\frac{1}{\sqrt{m}})}\geq\binom{d}{m}^{2-2\delta(\ell)^{3}}.\label{eq:lower bound on dimension of irrep}
\end{equation}
Consequently, we get
\begin{align*}
\left(\mathbb{E}\left(\rho_{\lambda_{(m)}}\circ w\right)\right)^{9} & =\mathbb{E}\left(\rho_{\lambda_{(m)}}\circ w^{*9}\right)\rho_{\lambda_{(m)}}(1)^{8}\leq\mathbb{E}\left(\left|\tr\left(\bigwedge\nolimits ^{m}w^{*9}(X_{1},\ldots,X_{r})\right)\right|^{2}\right)\rho_{\lambda_{(m)}}(1)^{8}\\
 & \leq\binom{d}{m}^{2-\delta(\ell)^{2}}\rho_{\lambda_{(m)}}(1)^{8}\leq\rho_{\lambda_{(m)}}(1)^{9-\frac{\delta(\ell)^{2}}{4}},
\end{align*}
and thus $\mathbb{E}\left(\rho_{\lambda_{(m)}}\circ w\right)\leq\rho_{\lambda_{(m)}}(1)^{1-\frac{\delta(\ell)^{2}}{36}}$.
Taking $\epsilon(\ell):=\frac{\delta(\ell)^{2}}{72}$, and using $m+1\leq\binom{d}{m}^{2\delta(\ell)^{3}}$,
we get
\[
\mathbb{E}\left(\left|\tr\left(\bigwedge\nolimits ^{m}w(X_{1},\ldots,X_{r})\right)\right|^{2}\right)=\sum_{j=0}^{m}\mathbb{E}\left(\rho_{\lambda_{(j)}}\circ w\right)\leq(m+1)\binom{d}{m}^{2-4\epsilon(\ell)}\leq\binom{d}{m}^{2(1-\epsilon(\ell))}.\qedhere
\]
\end{proof}
We end the section with a proof of Theorem \ref{thm:LST theorem}. 
\begin{proof}[Proof of Theorem \ref{thm:LST theorem}]
Let $w\in F_{r}$. Denote $\widetilde{w}:=w*w^{-1}$. Recall that
$\tau_{\widetilde{w},G}=f_{\widetilde{w},G}\mu_{G}$ and note that
for every $t\in\N$, 
\[
f_{\widetilde{w}^{*t},G}=f_{\widetilde{w},G}^{*t}.
\]
Applying \cite[Theorem 4]{LST19}, there are $C',M(w)\in\N$ such
that, for $N(w):=C'\ell(w)^{4}$ and for every finite simple group
$G$ of size $>M(w)$, one has
\[
\left|\sum_{1\neq\rho\in\mathrm{Irr}(G)}\frac{a_{\widetilde{w},G,\rho}^{N(w)}}{\rho(1)^{N(w)-1}}\rho(1)\right|=\left|\sum_{1\neq\rho\in\mathrm{Irr}(G)}a_{\widetilde{w}^{*N(w)},G,\rho}\rho(1)\right|=\left|f_{\widetilde{w}^{*N(w)},G}(1)-1\right|=\left|f_{\widetilde{w},G}^{*N(w)}(1)-1\right|<1,
\]
where the first equality follows from (\ref{eq:Fourier coefficients of convolutions of words}).
Since $a_{\widetilde{w},G,\rho}=\frac{\left|a_{w,G,\rho}\right|^{2}}{\rho(1)}\geq0$,
we deduce that for each $1\neq\rho\in\mathrm{Irr}(G)$
\[
\frac{\left|a_{w,G,\rho}\right|^{2N(w)}}{\rho(1)^{2N(w)-2}}=\frac{\left|a_{w,G,\rho}\right|^{2N(w)}}{\rho(1)^{2N(w)-1}}\rho(1)=\frac{a_{\widetilde{w},G,\rho}^{N(w)}}{\rho(1)^{N(w)-1}}\rho(1)<1,
\]
from which the theorem follows for $\epsilon=\frac{1}{N(w)}=\frac{1}{C'\ell(w)^{4}}$. 
\end{proof}

\section{\label{sec:Fourier-coefficients-of symmetric powers}Fourier coefficients
of symmetric powers}

In this section, we prove Theorem \ref{thm:main thm symmetric powers}.
Denote $\mathcal{J}_{m,d}=\{c_{1}\leq\ldots\leq c_{m}:c_{i}\in[d]\}$.
We first claim that, for each $A\in\mathrm{End}(\C^{d})$ and $m\geq1$,
\[
\tr\left(\Sym^{m}A\right)=\frac{1}{m!}\sum_{\overrightarrow{a}\in[d]^{m}}\sum_{\pi\in S_{m}}A_{a_{1}a_{\pi(1)}}\cdots A_{a_{m}a_{\pi(m)}}.
\]
Indeed, for each $\overrightarrow{c}\in\mathcal{J}_{m,d}$, let $\nu_{\overrightarrow{c}}$
be the shape of $\overrightarrow{c}$ (see Definition \ref{def:shape of a function})
and set 
\[
v_{\overrightarrow{c}}:=\sqrt{\frac{1}{m!\cdot\nu_{\overrightarrow{c}}!}}\sum_{\pi\in S_{m}}e_{c_{\pi(1)}}\otimes\ldots\otimes e_{c_{\pi(m)}}.
\]
Then $\left\{ v_{\overrightarrow{c}}\right\} _{\overrightarrow{c}\in\mathcal{J}_{m,d}}$
is an orthonormal basis for $\Sym^{m}(\C^{d})$. Given $A\in\mathrm{End}(\C^{d})$,
we have: 
\begin{align*}
\tr\left(\Sym^{m}A\right) & =\sum_{\overrightarrow{c}\in\mathcal{J}_{m,d}}\langle A.v_{\overrightarrow{c}},v_{\overrightarrow{c}}\rangle=\sum_{\overrightarrow{c}\in\mathcal{J}_{m,d}}\frac{1}{m!\cdot\nu_{\overrightarrow{c}}!}\sum_{\pi,\pi'\in S_{m}}\langle Ae_{c_{\pi(1)}}\otimes\ldots\otimes Ae_{c_{\pi(m)}},e_{c_{\pi'(1)}}\otimes\ldots\otimes e_{c_{\pi'(m)}}\rangle\\
 & =\sum_{\overrightarrow{c}\in\mathcal{J}_{m,d}}\frac{1}{\nu_{\overrightarrow{c}}!}\sum_{\pi\in S_{m}}\langle Ae_{c_{1}}\otimes\ldots\otimes Ae_{c_{m}},e_{c_{\pi(1)}}\otimes\ldots\otimes e_{c_{\pi(m)}}\rangle\\
 & =\sum_{\overrightarrow{c}\in\mathcal{J}_{m,d}}\frac{1}{\nu_{\overrightarrow{c}}!}\sum_{\pi\in S_{m}}A_{c_{1}c_{\pi(1)}}\cdots A_{c_{m}c_{\pi(m)}}=\frac{1}{m!}\sum_{\overrightarrow{a}\in[d]^{m}}\sum_{\pi\in S_{m}}A_{a_{1}a_{\pi(1)}}\cdots A_{a_{m}a_{\pi(m)}},
\end{align*}
where the last equality follows since $\sum_{\pi\in S_{m}}A_{c_{1}c_{\pi(1)}}\cdots A_{c_{m}c_{\pi(m)}}$
is invariant under permuting $c_{1},\ldots,c_{m}$, and since there
are $\frac{m!}{\nu_{\overrightarrow{c}}!}$ vectors $\overrightarrow{a}\in[d]^{m}$
of a shape $\nu_{\overrightarrow{c}}$. In particular, for any word
$w$, 
\begin{equation}
\tr\left(\Sym^{m}w(X_{1},\ldots,X_{r})\right)=\frac{1}{m!}\sum_{\overrightarrow{a}\in[d]^{m}}\sum_{\pi\in S_{m}}w(X_{1},\ldots,X_{r})_{a_{1}a_{\pi(1)}}\cdots w(X_{1},\ldots,X_{r})_{a_{m}a_{\pi(m)}}.\label{eq:formula for trace of symmetric power}
\end{equation}

\begin{prop}
\label{prop:Main result-reduction to Weingarten- symmetric powers}Let
$w\in F_{r}$ be a cyclically reduced word. With $\Phi,T,\Omega,\Omega_{s,u}$
as in $\mathsection$\ref{sec:Reduction-of-Theorem}, we have:
\begin{equation}
\mathbb{E}\left(\left|\tr\left(\Sym^{m}w(X_{1},\ldots,X_{r})\right)\right|^{2}\right)=\frac{1}{m!^{2}}\sum_{(\pi,\pi',F,\varSigma)\in\widetilde{Z}}\widetilde{\Wg}(\varSigma^{2}),\label{eq:reduction to Weingarten symmetric powers}
\end{equation}
where 
\[
\widetilde{Z}:=\left\{ (\pi,\pi',F,\varSigma):\substack{F:\Omega\rightarrow[d],\varSigma\in S_{\Phi}\\
\pi,\pi'\in\Sym(\Omega_{1,1})\times\Sym(\Omega_{2,1})\\
F\circ T=F\circ\pi\pi'\circ\Sigma
}
\right\} .
\]
\end{prop}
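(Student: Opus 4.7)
My plan is to mirror the derivation of Proposition \ref{prop:Main result-reduction to Weingarten} almost verbatim, starting from the identity (\ref{eq:formula for trace of symmetric power}). The three differences between the symmetric and the exterior case are: (i) there is no sign $(-1)^{\pi}$, (ii) the outer sum is over all $\overrightarrow{a}\in[d]^{m}$, with no increasing condition, and (iii) there is an overall factor $\frac{1}{m!}$. Items (i) and (ii) together account for the fact that $\widetilde{Z}$ has no sign attached and no monotonicity condition on $F(s,1,-)$, while (iii) produces the $\frac{1}{m!^{2}}$ prefactor in (\ref{eq:reduction to Weingarten symmetric powers}) after combining with the complex conjugate.

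First, by applying the same chain of equalities as in (\ref{eq:reformulation of trace}) to (\ref{eq:formula for trace of symmetric power}) (with $\widetilde{T}$ as in (\ref{eq:T_tilde})), I obtain
\[
\tr\bigl(\Sym^{m}w(X_{1},\ldots,X_{r})\bigr)=\frac{1}{m!}\sum_{\pi\in\Sym(\{\ell\}\times[m])}\sum_{F:[\ell]\times[m]\rightarrow[d]}\prod_{(u,k)\in[\ell]\times[m]}\bigl(X_{w(u)}\bigr)_{F(u,k),F(\widetilde{T}\pi(u,k))},
\]
and an identical formula for $\overline{\tr\Sym^{m}w}$ with $w$ replaced by $w^{-1}$ and $\pi$ by $\pi'$. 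Multiplying the two expressions and setting $\Omega,\Omega_{s,u},\widetilde{w},T$ as in $\mathsection\ref{sec:Reduction-of-Theorem}$, the analogue of (\ref{eq:square of trace simplified}) reads
\[
\bigl|\tr\Sym^{m}w\bigr|^{2}=\frac{1}{m!^{2}}\sum_{(\pi,\pi')\in\prod_{s=1}^{2}\Sym(\Omega_{s,\ell})}\sum_{F:\Omega\rightarrow[d]}\prod_{\gamma\in\Omega}\bigl(X_{\widetilde{w}(\gamma)}\bigr)_{F(\gamma),F(T\pi\pi'(\gamma))}.
\]

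Next, conjugating $\pi,\pi'$ by $T$ to move them into $\Sym(\Omega_{s,1})$ and then changing variables $F\mapsto F\circ(\pi\pi')^{-1}$ (a bijection on functions $\Omega\to[d]$), I arrive, exactly as in (\ref{eq:very simplified formula for trace}), at
\[
\bigl|\tr\Sym^{m}w\bigr|^{2}=\frac{1}{m!^{2}}\sum_{(\pi,\pi')\in\Sym(\Omega_{1,1})\times\Sym(\Omega_{2,1})}\sum_{F:\Omega\rightarrow[d]}\prod_{\gamma\in\Omega}\bigl(X_{\widetilde{w}(\gamma)}\bigr)_{F(\pi\pi'\gamma),F(T(\gamma))}.
\]
At this stage the only distinction from the wedge computation is that the sums over $\pi,\pi'$ and over $F$ are completely unconstrained. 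Finally, taking the expectation and invoking Corollary \ref{cor:Weingarten coordinate free- multidimensional} with the symmetric partition $\Phi=(A_{1},\ldots,A_{r},B_{1},\ldots,B_{r})$ of (\ref{eq:A_i and B_i}), with first-index function $\gamma\mapsto F(\pi\pi'\gamma)$ and second-index function $\gamma\mapsto F(T\gamma)$, converts each inner expectation into $\sum_{\Sigma\in H_{\Phi}:\,F\circ T=F\circ\pi\pi'\circ\Sigma}\widetilde{\Wg}(\Sigma^{2})$. Collecting the four parameters $(\pi,\pi',F,\Sigma)$ into the set $\widetilde{Z}$ gives (\ref{eq:reduction to Weingarten symmetric powers}).

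There is no real obstacle here; the proof is an exercise in bookkeeping that parallels $\mathsection\ref{sec:Reduction-of-Theorem}$. The only point that requires a brief sanity check is that the substitution $F\mapsto F\circ(\pi\pi')^{-1}$ is genuinely a bijection on the unrestricted function space (which it is), so no combinatorial factor is lost or doubled. Thus the $\frac{1}{m!^{2}}$ from the two copies of the symmetrizer is preserved verbatim into the final formula.
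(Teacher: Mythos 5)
Your proof is correct and mirrors the paper's own argument: start from the symmetric-power analogue of the trace formula, reindex via $\widetilde{T}$, multiply the $w$- and $w^{-1}$-expressions, move $\pi,\pi'$ into $\Sym(\Omega_{s,1})$ by $T$-conjugation and substitute $F\mapsto F\circ(\pi\pi')^{-1}$, and finish with Corollary \ref{cor:Weingarten coordinate free- multidimensional}. You correctly keep the $\frac{1}{m!}$ factor throughout and note the two structural simplifications (no sign, no monotonicity constraint), so no further comment is needed.
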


\begin{proof}
Similarly to (\ref{eq:reformulation of trace}), we have 
\begin{align*}
\tr\left(\Sym^{m}w(X_{1},\ldots,X_{r})\right) & =\frac{1}{m!}\sum_{\overrightarrow{a}\in[d]^{m}}\sum_{\pi\in S_{m}}\sum_{\substack{f:[\ell+1]\times[m]\rightarrow[d]\\
f(1,k)=a_{k},f(\ell+1,k)=a_{\pi(k)}
}
}\prod_{(u,k)\in[\ell]\times[m]}\left(X_{w(u)}\right)_{f(u,k),f(u+1,k)}\\
 & =\sum_{\pi\in\Sym(\left\{ \ell\right\} \times[m])}\sum_{F:[\ell]\times[m]\rightarrow[d]}\prod_{(u,k)\in[\ell]\times[m]}\left(X_{w(u)}\right)_{F(u,k),F(\widetilde{T}\pi(u,k))}.
\end{align*}
Consequently, as in (\ref{eq:very simplified formula for trace}),
we have:
\[
\mathbb{E}\left(\left|\tr\left(\Sym^{m}w(X_{1},\ldots,X_{r})\right)\right|^{2}\right)=\frac{1}{m!^{2}}\sum_{(\pi,\pi')\in\Sym(\Omega_{1,1})\times\Sym(\Omega_{2,1})}\sum_{F:\Omega\rightarrow[d]}\prod_{\gamma\in\Omega}\left(X_{\widetilde{w}(\gamma)}\right)_{F(\pi\pi'\gamma),F(T(\gamma))}.
\]
The Proposition now follows from Corollary \ref{cor:Weingarten coordinate free- multidimensional}. 
\end{proof}
We next define an action of $H:=\prod_{(s,u)\in[2]\times[\ell]}\Sym(\Omega_{s,u})$
on $\widetilde{Z}$ in the same way as in $\mathsection$\ref{sec:Estimating-the-contribution}.
For $(s,u)\in[2]\times\left([\ell]\backslash\{1\}\right)$ and $\pi_{s,u}\in\Sym\left(\Omega_{s,u}\right)$,
\[
\pi_{s,u}.\left(\pi,\pi',F,\varSigma\right):=\left(\pi,\pi',F\circ\pi_{s,u}^{-1},\pi_{s,u}\circ\varSigma\circ T^{-1}\pi_{s,u}^{-1}T\right),
\]
and if $(\pi_{1,1},\pi_{2,1})\in\Sym(\Omega_{1,1})\times\Sym(\Omega_{2,1})$,
\[
\left(\pi_{1,1},\pi_{2,1}\right).\left(\pi,\pi',F,\varSigma\right):=\left(\pi_{1,1}\pi,\pi_{2,1}\pi',F\circ\pi_{1,1}^{-1}\pi_{2,1}^{-1},\varSigma\circ T^{-1}\pi_{1,1}^{-1}\pi_{2,1}^{-1}T\right).
\]

\begin{proof}[Proof of Theorem \ref{thm:main thm symmetric powers}]
The proof is similar to the proof of Theorem \ref{thm:abs_value_of_trace}.
The only difference is that now, summing over the $H$-orbit kills
all representations that do not appear in $\mathrm{Ind}_{S_{m}^{\ell_{i}}}^{S_{m\ell_{i}}}(1)$,
rather than the representations not in $\mathrm{Ind}_{S_{m}^{\ell_{i}}}^{S_{m\ell_{i}}}(\sgn)$.
By Lemma \ref{lem:description of littlewood richardson for Young subgroups},
the irreducible subrepresentations $\chi_{\lambda}$ of $\mathrm{Ind}_{S_{m}^{\ell_{i}}}^{S_{m\ell_{i}}}(1)$
correspond to partitions $\lambda=(\lambda_{1},\ldots,\lambda_{\ell_{i}})$
with at most $\ell_{i}$ rows, and, therefore, $\prod_{(a,b)\in\lambda}(d+b-a)\geq(d-\ell)^{m\ell_{i}}$.
As in Corollary \ref{cor:analyzing H-orbits} and (\ref{eq:Z.to.orbit}),
the average of $\widetilde{\Wg}(\varSigma^{2})$ over an $H$-orbit
$H.(\widehat{\pi},\widehat{\pi'},\widehat{F},\widehat{\varSigma})$
is bounded by
\begin{align}
 & \frac{1}{m!^{2\ell}}\prod_{i=1}^{r}\left|\sum_{\substack{h_{i}\in\prod_{\widetilde{w}=i}\Sym(\Omega_{s,u})\\
h'_{i}\in\prod_{\widetilde{w}=-i}\Sym(\Omega_{s,u})
}
}\Wg\left(h_{i}\widehat{\varSigma}|_{B_{i}}h'_{i}\widehat{\varSigma}|_{A_{i}}\right)\right|\nonumber \\
\leq & \frac{1}{m!^{\ell}}\prod_{i=1}^{r}\frac{m!^{\ell_{i}}}{(m\ell_{i})!}\sum_{\lambda\vdash m\ell_{i}:\chi_{\lambda}\subseteq\mathrm{Ind}_{\mathrm{S}_{m}^{\ell_{i}}}^{S_{m\ell_{i}}}(1)}\frac{\chi_{\lambda}(1)\langle\chi_{\lambda},1\rangle_{\mathrm{S}_{m}^{\ell_{i}}}}{\prod_{(a,b)\in\lambda}(d+b-a)}\leq\frac{1}{m!^{\ell}}\frac{1}{(d-\ell)^{m\ell}}.\label{eq:estimates on orbits- symmetric}
\end{align}
Denote $\widetilde{Z}_{\pi,\pi'}:=\left\{ (F,\varSigma):(\pi,\pi',F,\varSigma)\in\widetilde{Z}\right\} $.
Since $\widetilde{Z}_{\mathrm{Id},\mathrm{Id}}=W'$, Proposition \ref{prop:estimates on Z}
implies that
\begin{equation}
\left|\widetilde{Z}\right|=m!^{2}\left|\widetilde{Z}_{\mathrm{Id},\mathrm{Id}}\right|=m!^{2}\left|W'\right|\leq m!^{2}\binom{d+m\ell}{m\ell}(m\ell)!\prod_{0\neq k=-r}^{r}\frac{\left(m\ell_{k}\right)!}{\left(\sum_{i<k}m\ell_{i,k}\right)!}.\label{eq:estimates on Z symmetric}
\end{equation}
As in the proof of Theorem \ref{thm:abs_value_of_trace}, if $d\geq m\ell$,
then
\begin{align*}
\mathbb{E}\left(\left|\tr\left(\Sym^{m}w(X_{1},\ldots,X_{r})\right)\right|^{2}\right) & =\frac{1}{m!^{2}}\sum_{(\pi,\pi',F,\varSigma)\in\widetilde{Z}}\widetilde{\Wg}(\varSigma^{2})\leq\left|\widetilde{Z}\right|\frac{1}{m!^{\ell+2}}\frac{1}{(d-\ell)^{m\ell}}\\
 & \leq\frac{(d+m\ell)\cdots(d+1)}{(d-\ell)^{m\ell}m!^{\ell}}\prod_{0\neq k=-r}^{r}\frac{\left(m\ell_{k}\right)!}{\left(\sum_{i<k}m\ell_{i,k}\right)!}\\
 & \leq4^{m\ell}\ell^{m\ell}\prod_{0\neq k=-r}^{r}\binom{m\ell_{k}}{m\ell_{k}/2}4^{m\ell}\ell^{m\ell}2^{2m\ell}=(16\ell)^{m\ell}.\qedhere
\end{align*}
\end{proof}

\appendix

\section{\label{sec:Fourier-coefficients-of power word}Fourier coefficients
of the power word and a Diaconis\textendash Shahshahani type result}

In this Appendix, we formulate two results. The first is a computation
of the Fourier coefficients of the power word $w=x^{l}$ for representations
$\rho_{\lambda}\in\mathrm{Irr}\left(\U_{d}\right)$, where $\widetilde{\lambda}$
(see Remark \ref{rem:generalized Littlewood-Richardson}) has at most
$\frac{d}{2l}$ boxes. The second is a Diaconis\textendash Shahshahani
type result for the $m$-th coefficient of the characteristic polynomial
of a word $w$ in random unitary matrices. Both statements are consequences
of known results.
\begin{prop}
\label{prop:Fourier coeff of power}Let $w=x^{l}$ be the $l$-th
power word. Then, for every $m\in\N$ and every $d\geq2ml$, 
\begin{enumerate}
\item We have
\[
\mathbb{E}\left(\left|\rho_{\lambda}\circ w\right|^{2}\right)=\frac{1}{m!}\sum_{\sigma\in S_{m}}l^{\ell(\sigma)}\left|\chi_{\lambda}(\sigma)\right|^{2},
\]
for all $\lambda\vdash m$. In particular, $\mathbb{E}\left(\left|\rho_{\lambda}\circ w\right|^{2}\right)\leq l^{m}$.
\item We have
\[
\mathbb{E}\left(\left|\tr\left(\bigwedge\nolimits ^{m}w\right)\right|^{2}\right)=\mathbb{E}\left(\left|\tr\left(\Sym^{m}w\right)\right|^{2}\right)=\binom{l+m-1}{m}.
\]
\end{enumerate}
\end{prop}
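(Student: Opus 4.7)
The plan is to deduce both parts from the Diaconis--Shahshahani moment formula (\ref{eq:DS moments of trace}) via the Frobenius character formula. Writing $p_k(X):=\tr(X^k)$ and $p_\sigma(X):=\prod_k\tr(X^k)^{a_k(\sigma)}$ for $\sigma\in S_m$ with cycle-type exponents $a_k(\sigma)$, the Frobenius formula reads $\rho_\lambda(X)=\frac{1}{m!}\sum_{\sigma\in S_m}\chi_\lambda(\sigma)p_\sigma(X)$. The key elementary observation is that $p_k(X^l)=p_{lk}(X)$, which gives $p_\sigma(X^l)=\prod_k\tr(X^{lk})^{a_k(\sigma)}$.

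For part (1), I would square the Frobenius expression and interchange the sums with the expectation:
\[
\mathbb{E}(|\rho_\lambda(X^l)|^2)=\frac{1}{m!^2}\sum_{\sigma,\tau\in S_m}\chi_\lambda(\sigma)\overline{\chi_\lambda(\tau)}\,\mathbb{E}\Big(\prod_k\tr(X^{lk})^{a_k(\sigma)}\overline{\tr(X^{lk})}^{a_k(\tau)}\Big).
\]
Since $\sum_k k\,a_k(\sigma)=m$, the hypothesis $d\geq 2ml$ guarantees $d\geq\sum_k(a_k(\sigma)+a_k(\tau))\cdot lk$, so (\ref{eq:DS moments of trace}) applies. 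The inner expectation then vanishes unless $\sigma$ and $\tau$ are conjugate, in which case it equals $\prod_k(lk)^{a_k}a_k!=l^{\ell(\sigma)}\prod_k k^{a_k}a_k!$. Grouping by conjugacy class $C$ and using $|C|\cdot\prod_k k^{a_k(C)}a_k(C)!=m!$, one power of $|C|$ cancels and the double sum collapses to
\[
\mathbb{E}(|\rho_\lambda(X^l)|^2)=\frac{1}{m!}\sum_{\sigma\in S_m}l^{\ell(\sigma)}|\chi_\lambda(\sigma)|^2.
\]
The bound $\leq l^m$ then follows at once from $\ell(\sigma)\leq m$ combined with Schur orthogonality $\frac{1}{m!}\sum_\sigma|\chi_\lambda(\sigma)|^2=1$.

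For part (2), I would specialize $\lambda$. Since $\bigwedge^m\mathbb{C}^d\simeq V_{(1^m)}$ has $\chi_{(1^m)}=\sgn$ and $\Sym^m\mathbb{C}^d\simeq V_{(m)}$ has $\chi_{(m)}=1$, we have $|\chi_\lambda(\sigma)|^2=1$ for every $\sigma$ in both cases. Part (1) therefore yields
\[
\mathbb{E}(|\tr(\bigwedge\nolimits^m w)|^2)=\mathbb{E}(|\tr(\Sym^m w)|^2)=\frac{1}{m!}\sum_{\sigma\in S_m}l^{\ell(\sigma)}=\frac{l(l+1)\cdots(l+m-1)}{m!}=\binom{l+m-1}{m},
\]
where the middle equality is the classical generating-function identity for the unsigned Stirling numbers of the first kind.

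The only real obstacle is bookkeeping: one must check that the factor $\prod_k k^{a_k}a_k!$ produced by (\ref{eq:DS moments of trace}) cancels exactly one of the two factors of $|C|$ that arise when summing $\sigma,\tau$ over a conjugacy class $C$. This cancellation, which is what converts the double sum over $S_m\times S_m$ into a single sum over $S_m$ weighted by $l^{\ell(\sigma)}|\chi_\lambda(\sigma)|^2$, is the only nontrivial computational step in the argument.
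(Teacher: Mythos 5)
Your proposal is correct and follows essentially the same route as the paper: both expand $\rho_\lambda$ in power sums via the Frobenius/Schur formula, invoke the Diaconis--Shahshahani moment formula for $X$ (shifted by $j\mapsto lj$, valid since $d\geq 2ml$), and then convert to a sum over $S_m$ using the class size $|C|=m!/z_\mu$. The only cosmetic difference is that you organize the computation as a double sum over $S_m\times S_m$ collapsing by conjugacy, whereas the paper sums over partitions $\mu\vdash m$ from the start and multiplies by $|C_\mu|$ at the end; the ``bookkeeping'' cancellation $|C|\cdot\prod_k k^{a_k}a_k!=m!$ that you flag as the nontrivial step is exactly the identity the paper applies, and your identification of the relevant $S_m$-characters (sign for $\bigwedge^m$, trivial for $\Sym^m$) and the Stirling-number evaluation $\frac{1}{m!}\sum_\sigma l^{\ell(\sigma)}=\binom{l+m-1}{m}$ also match the paper.
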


\begin{proof}
For every matrix $A\in\U_{d}$ and every $\mu\vdash m$, set
\begin{equation}
\tr_{\mu}(A):=\prod_{j=1}^{m}\tr(A^{j})^{a_{j}},\label{eq:powersum}
\end{equation}
where $\mu=(1^{a_{1}}\cdots m^{a_{m}})$ is the partition $m=\underset{a_{1}\text{ times}}{\underbrace{(1+\ldots+1)}}+\ldots+\underset{a_{m}\text{ times}}{\underbrace{(m+\ldots+m)}}$.
The functions $\tr_{\mu}$ correspond to the power-sums symmetric
functions $p_{\mu}$. Given $\lambda\vdash m$, the character $\rho_{\lambda}(A)$
is a Schur polynomial in the eigenvalues of $A$, and, hence, it can
be expressed in terms of $\tr_{\mu}(A)$ via the following formula
(see e.g.~\cite[I.7, page 114]{Mac95}),
\begin{equation}
\rho_{\lambda}(A)=\sum_{\mu\vdash m}\frac{\chi_{\lambda}(\mu)}{\prod_{j=1}^{m}a_{j}!j^{a_{j}}}\cdot\tr_{\mu}(A),\label{eq:formula for characters}
\end{equation}
where $\chi_{\lambda}(\mu)$ is the value of the character $\chi_{\lambda}\in\mathrm{Irr}(S_{m})$
on the elements with cycle type $\mu$. In addition, by (\ref{eq:DS moments of trace}),
for every pair of partitions $\mu=(1^{a_{1}}\cdots m^{a_{m}})$ and
$\mu'=(1^{b_{1}}\cdots m^{b_{m}})$ of $m$, we have:
\begin{equation}
\mathbb{E}\left(\tr_{\mu}(X^{l})\tr_{\mu'}(\overline{X}^{l})\right)=\mathbb{E}\left(\prod_{j=1}^{m}\tr(X^{jl})^{a_{j}}\tr(\overline{X}^{jl})^{b_{j}}\right)=\delta_{\mu,\mu'}\prod_{j=1}^{m}(jl)^{a_{j}}a_{j}!.\label{eq:orthogonality of power-sums}
\end{equation}
Combining (\ref{eq:formula for characters}) and (\ref{eq:orthogonality of power-sums}),
and using the fact that the number of permutations $\sigma\in S_{m}$
of cycle type $\mu=(1^{a_{1}}\cdots m^{a_{m}})$ is $\frac{m!}{\prod_{j=1}^{m}a_{j}!j^{a_{j}}}$,
we obtain:
\begin{align}
\mathbb{E}\left(\left|\rho_{\lambda}\left(X^{l}\right)\right|^{2}\right) & =\sum_{\mu\vdash m}\left|\chi_{\lambda}(\mu)\right|^{2}\frac{\mathbb{E}\left(\left|\tr_{\mu}\left(X^{l}\right)\right|^{2}\right)}{\left(\prod_{j=1}^{m}a_{j}!j^{a_{j}}\right)^{2}}=\sum_{\mu\vdash m}\frac{l^{\ell(\mu)}\left|\chi_{\lambda}(\mu)\right|^{2}}{\prod_{j=1}^{m}a_{j}!j^{a_{j}}}\nonumber \\
 & =\frac{1}{m!}\sum_{\mu\vdash m}\frac{m!}{\prod_{j=1}^{m}a_{j}!j^{a_{j}}}l^{\ell(\mu)}\left|\chi_{\lambda}(\mu)\right|^{2}=\frac{1}{m!}\sum_{\sigma\in S_{m}}l^{\ell(\sigma)}\left|\chi_{\lambda}(\sigma)\right|^{2}.\label{eq:Fourier coeffcients of the power word}
\end{align}
The second claim of Item (1) follows from Schur orthogonality and
the inequality $l^{\ell(\sigma)}\leq l^{m}$. 

For Item (2), note that $\tr\left(\bigwedge\nolimits ^{m}w\right)=\rho_{(1^{m})}\circ w$
and $\tr\left(\Sym^{m}w\right)=\rho_{(m^{1})}\circ w$. The corresponding
characters of $S_{m}$ are the sign and the trivial characters. Thus,
(\ref{eq:Fourier coeffcients of the power word}) becomes
\[
\mathbb{E}\left(\left|\tr\left(\bigwedge\nolimits ^{m}w\right)\right|^{2}\right)=\mathbb{E}\left(\left|\tr\left(\Sym^{m}w\right)\right|^{2}\right)=\mathbb{E}_{S_{m}}\left(l^{\ell(\sigma)}\right)=\frac{1}{m!}\sum_{k=1}^{m}\left[\begin{array}{c}
m\\
k
\end{array}\right]l^{k}=\binom{l+m-1}{m},
\]
where $\left[\begin{array}{c}
m\\
k
\end{array}\right]$ is the number of permutations of $m$ elements with exactly $k$
disjoint cycles, also known as the unsigned Stirling number of the
first kind. The last equality follows e.g.~from \cite[Equation (6.11)]{GKP94}.
This concludes Item (2). 
\end{proof}
We next prove a Diaconis\textendash Shahshahani type result. We first
recall the following proposition, which is a consequence of \cite[Theorem 2]{MSS07}
and \cite[Theorem 4.1]{Rud06} (see also \cite[Corollary 1.13]{MP19}). 
\begin{prop}
\label{prop:power sums for general words}Let $w\in F_{r}$, and let
$\mu=(1^{a_{1}}\cdots m^{a_{m}})$, $\mu'=(1^{b_{1}}\cdots m^{b_{m}})$
be partitions of $m$. Let $p(w)\in\N$ be such that $w=u^{p(w)}$
with $u\in F_{r}$ a non-power. Then:
\begin{equation}
\underset{d\rightarrow\infty}{\lim}\mathbb{E}_{\U_{d}}\left(\tr_{\mu}(w)\tr_{\mu'}(w^{-1})\right)=\underset{d\rightarrow\infty}{\lim}\mathbb{E}_{\U_{d}}\left(\prod_{j=1}^{m}\tr(w^{j})^{a_{j}}\tr(w^{-j})^{b_{j}}\right)=\delta_{\mu,\mu'}\prod_{j=1}^{m}a_{j}!(jp(w))^{a_{j}}.\label{eq:moments of product of words}
\end{equation}
\end{prop}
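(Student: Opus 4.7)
The plan is to deduce the statement from the asymptotic Gaussianity of word traces on unitary groups. By \cite[Theorem 2]{MSS07} combined with \cite[Theorem 4.1]{Rud06} (see also \cite[Corollary 1.13]{MP19}), given any non-trivial word $w\in F_r$ with primitive root decomposition $w=u^{p(w)}$, the sequence of random variables $\tr\left(w^j(X_1,\ldots,X_r)\right)$, for $j=1,2,\ldots,m$, converges jointly in distribution, as $d\to\infty$, to a family of independent centered complex Gaussians $Z_1,\ldots,Z_m$ with variances $\mathbb{E}|Z_j|^2 = j p(w)$. The correct variance here comes from the fact that $w^j=u^{jp(w)}$ and $u$ is still a non-power, so the primitive power of $w^j$ equals $jp(w)$.

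First, I would upgrade this convergence in distribution to convergence of the mixed moments appearing in \eqref{eq:moments of product of words}. For this, rather than appealing to uniform integrability directly, I would invoke the rationality result of \cite{MP19}: for any fixed tuple $(a_j,b_j)$, the expectation $\mathbb{E}_{\U_d}\left(\prod_{j=1}^m\tr(w^j)^{a_j}\tr(w^{-j})^{b_j}\right)$ is a rational function of $d$ for $d$ sufficiently large, and thus its $d\to\infty$ limit coincides with the limit of the distributional moments. Equivalently, one can bound $\mathbb{E}_{\U_d}(|\tr(w^j)|^{2k})$ uniformly in $d$ using Weingarten calculus (e.g.\ as in the short $\ell=1$ calculation of Diaconis--Shahshahani applied to $w^j$) to extract uniform integrability of each factor.

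Next, I would compute the limiting Gaussian moments explicitly. By independence of the $Z_j$,
\[
\mathbb{E}\left(\prod_{j=1}^m Z_j^{a_j}\overline{Z_j}^{b_j}\right)=\prod_{j=1}^m \mathbb{E}\left(Z_j^{a_j}\overline{Z_j}^{b_j}\right).
\]
For a centered complex Gaussian $Z$ with $\mathbb{E}|Z|^2=v$, the standard formula gives $\mathbb{E}(Z^a\overline{Z}^b)=\delta_{a,b}\,a!\,v^{a}$. Substituting $v=jp(w)$ and taking the product yields exactly $\delta_{\mu,\mu'}\prod_{j=1}^m a_j!\,(jp(w))^{a_j}$. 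The Kronecker delta appears because if $(a_j)\neq(b_j)$ then some $a_j\neq b_j$ and the corresponding factor vanishes.

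The main potential obstacle is verifying that the variance extracted from \cite{MSS07,Rud06} is indeed $jp(w)$ (the proportionality constant matches because the same normalization is used for $\tr$). Once this is pinned down, the proof is a formal consequence of the cited convergence and the Wick-type moment formula for independent complex Gaussians; no new computation is needed.
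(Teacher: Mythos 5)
Your argument runs in the opposite logical direction from the paper and from the sources it cites, and the step where you reverse the direction is exactly where the gap sits. The paper does not prove Proposition \ref{prop:power sums for general words}; it recalls it as the content of \cite[Theorem 2]{MSS07}, \cite[Theorem 4.1]{Rud06} and \cite[Corollary 1.13]{MP19}, i.e.\ those references compute the joint moment asymptotics directly (by Weingarten/second-order-freeness type expansions), and the paper then deduces the Gaussian convergence (Corollary \ref{cor:The-random-variables}) \emph{from} the moments via the moment method. You instead take the joint Gaussian convergence as the input and try to recover the moments. Since the distributional convergence in those references is itself established by computing precisely the moments in \eqref{eq:moments of product of words}, your route is essentially circular unless you supply an independent justification for exchanging the limit $d\to\infty$ with the expectation.

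That justification is the genuine gap. Your option (a) does not work as stated: knowing that $\mathbb{E}_{\U_d}\bigl(\prod_j\tr(w^j)^{a_j}\tr(w^{-j})^{b_j}\bigr)$ agrees with a rational function of $d$ for large $d$ only tells you the limit of the expectations exists; it does not identify that limit with the moment of the limiting Gaussian vector --- for that you still need uniform integrability (mass escaping to infinity can shift expectations without affecting the distributional limit). Your option (b) is the right mechanism in principle, but the proposed source of the uniform bounds fails: the Diaconis--Shahshahani computation applies to a Haar-distributed matrix, and for a general word $w$ the matrix $w^j(X_1,\ldots,X_r)$ is \emph{not} Haar-distributed, so bounding $\sup_d\mathbb{E}\left|\tr(w^j)\right|^{2k}$ uniformly in $d$ is not a short calculation --- it is essentially the content of the theorems you are citing. (The trivial bound $\left|\tr(w^j)\right|\le d$ is of course useless here.) The cleanest repair is the paper's own: cite the moment statements of \cite{MSS07,Rud06} (or \cite[Corollary 1.13]{MP19}, which gives the limit values directly) rather than the CLT. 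The remaining parts of your write-up --- the identification $p(w^j)=jp(w)$ since the primitive root $u$ of $w$ is still a non-power, and the Wick formula $\mathbb{E}(Z^a\overline{Z}^b)=\delta_{a,b}\,a!\,v^a$ with $v=jp(w)$ yielding $\delta_{\mu,\mu'}\prod_j a_j!(jp(w))^{a_j}$ --- are correct.
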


Since the joint moments of $\tr(w^{1}),\ldots,\tr(w^{m})$ converge,
as $d\rightarrow\infty$, to the joint moments of independent complex
normal random variables, an application of the moment method (as was
done in \cite{DS94} for $w=x$, and later in \cite{Rud06,MSS07}
for a general word) implies
\begin{cor}[{\cite[Theorem 4.1]{Rud06}, \cite[Theorem 2]{MSS07}}]
\label{cor:The-random-variables}The random variables $\tr(w^{1}),\ldots,\tr(w^{m})$
converge in distribution to $\sqrt{p(w)}Z_{1},\ldots,\sqrt{mp(w)}Z_{m}$,
as $d\rightarrow\infty$, where $Z_{1},\ldots,Z_{m}$ are independent
complex normal variables.
\end{cor}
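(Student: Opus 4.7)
The plan is a standard method-of-moments argument, reducing the claim to the moment computation supplied by Proposition~\ref{prop:power sums for general words}. First I would identify the target joint moments. If $Z_1,\ldots,Z_m$ are independent standard complex normal variables (so $\mathbb{E}(Z_j^a\overline{Z_j}^b)=\delta_{a,b}a!$ and different $Z_j$ factor), then a direct computation gives, for any partitions $\mu=(1^{a_1}\cdots m^{a_m})$ and $\mu'=(1^{b_1}\cdots m^{b_m})$ of $m$,
\[
\mathbb{E}\!\left(\prod_{j=1}^{m}\bigl(\sqrt{jp(w)}\,Z_j\bigr)^{a_j}\bigl(\overline{\sqrt{jp(w)}\,Z_j}\bigr)^{b_j}\right)=\delta_{\mu,\mu'}\prod_{j=1}^{m}a_j!\,(jp(w))^{a_j}.
\]
These are exactly the right-hand sides appearing in Proposition~\ref{prop:power sums for general words}, because on unitary matrices $\tr(w^{-j})=\overline{\tr(w^j)}$ and hence $\mathbb{E}_{\U_d}\!\left(\prod_j \tr(w^j)^{a_j}\tr(w^{-j})^{b_j}\right)$ is exactly the joint mixed moment of the random vector $(\tr(w^j),\overline{\tr(w^j)})_{j=1}^{m}$.

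Next I would invoke the method of moments. Since a complex Gaussian vector (and, more generally, any polynomial expression in finitely many independent complex Gaussians) is uniquely determined by its moments---its moments satisfy Carleman's condition---convergence of all joint moments of the pairs $(\tr(w^j),\overline{\tr(w^j)})_{j=1}^{m}$ to the corresponding joint moments of the target vector $(\sqrt{jp(w)}Z_j,\sqrt{jp(w)}\overline{Z_j})_{j=1}^{m}$ implies convergence in distribution of the random vectors, viewed as elements of $\mathbb{R}^{2m}\cong\mathbb{C}^{m}$ via real and imaginary parts. This yields the claimed convergence in distribution of $(\tr(w^j))_{j=1}^{m}$ to $(\sqrt{jp(w)}Z_j)_{j=1}^{m}$.

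Finally I would address the only subtlety: Proposition~\ref{prop:power sums for general words} only states convergence of moments with a prescribed total degree $m=\sum_j j(a_j+b_j)$ arising from a partition of $m$. However, an arbitrary joint moment $\mathbb{E}\prod_j\tr(w^j)^{a_j}\overline{\tr(w^j)}^{b_j}$ corresponds, via the substitution $\tr_\mu,\tr_{\mu'}$ of \eqref{eq:powersum}, to the same type of expression indexed by partitions of $\sum_j j a_j$ and $\sum_j j b_j$. When these two totals differ, one can enlarge both partitions by adjoining enough $1$-parts so that the resulting partitions have the same size, at the cost of multiplying by factors $(\tr(1))^{\cdot}=d^{\cdot}$ that are handled by the hypothesis in Proposition~\ref{prop:power sums for general words}; in all such mismatched cases the limiting moment is $0$, matching the Gaussian target since $Z_j$ is centered. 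The main (and only) obstacle is this bookkeeping step verifying that Proposition~\ref{prop:power sums for general words} indeed delivers \emph{all} joint moments---not merely those of balanced total degree---which is essentially routine once one writes things out.
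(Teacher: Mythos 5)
Your overall approach (compute the limiting joint moments, then apply the method of moments using that the Gaussian is moment-determinate) is exactly the route the paper takes, and the first two paragraphs of the proposal are sound. You also correctly notice the real subtlety: Proposition~\ref{prop:power sums for general words} as stated only covers the ``balanced'' joint moments, i.e.\ those $\mathbb{E}\prod_j\tr(w^j)^{a_j}\overline{\tr(w^j)}^{b_j}$ in which both tuples $(a_j)$ and $(b_j)$ arise from partitions of the \emph{same} integer $m$, so $\sum_j ja_j=\sum_j jb_j$. The method of moments needs \emph{all} joint moments, including the imbalanced ones.

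The fix you propose for that gap, however, does not work. Adjoining $1$-parts to the partition $\mu$ inserts factors of $\tr(w^1)=\tr(w)$ into $\tr_\mu(w)$, not factors of $\tr(\mathrm{Id})=d$; the observation ``$(\tr(1))^{\cdot}=d^{\cdot}$'' confuses the identity matrix with $w^1$. Worse, padding $\mu$ by $1$-parts genuinely changes the random variable $\tr_\mu(w)$ rather than multiplying the original moment by a deterministic constant, so there is no way to ``divide off'' the extra factors and recover the imbalanced moment from a balanced one. Indeed, the vanishing of the imbalanced limiting moments is not a formal consequence of the balanced formula (\ref{eq:moments of product of words}); for instance, one cannot deduce $\lim_d\mathbb{E}(\tr w)=0$ from the balanced cases alone, and the rotation-by-scalars symmetry that would force this vanishing is unavailable when $w$ lies in the commutator subgroup. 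The honest resolution, and the one the paper is implicitly invoking when it writes ``the joint moments of $\tr(w^1),\dots,\tr(w^m)$ converge,'' is that \cite[Theorem 4.1]{Rud06} and \cite[Theorem 2]{MSS07} establish convergence of \emph{all} joint $*$-moments (in fact the full convergence in distribution); Proposition~\ref{prop:power sums for general words} is merely a convenient extract of the balanced subfamily needed elsewhere, not a self-sufficient input for the method of moments. So the last paragraph of your proposal should be replaced by a direct appeal to those references for the complete moment convergence, after which the method-of-moments step you describe goes through.
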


In \cite{DG04}, Diaconis and Gamburd combined Corollary \ref{cor:The-random-variables}
for $w=x$ (namely \cite{DS94}), together with Newton's identities
relating elementary and power sum symmetric functions to give a formula
for the limit behavior of the random variables $\tr\bigwedge\nolimits ^{m}X$
with $X$ is a random unitary matrix in $\U_{d}$. Repeating the argument
for a general word $w$ yields the following description of $\underset{d\rightarrow\infty}{\lim}\tr_{\U_{d}}\bigwedge\nolimits ^{m}w$. 
\begin{cor}[{cf.~\cite[Proposition 4]{DG04}}]
\label{cor:moment method}Let $w\in F_{r}$ be a word and let $m\in\N$.
Then the sequence of random variables $\tr_{\U_{d}}\bigwedge\nolimits ^{m}w$
converges in distribution, as $d\rightarrow\infty$, to the polynomial
in the normal variables $Z_{1},\ldots,Z_{m}$ given by: 
\[
\frac{1}{m!}\det\left(\begin{array}{ccccc}
\sqrt{p(w)}Z_{1} & 1 & 0 & \ldots & 0\\
\sqrt{2p(w)}Z_{2} & \sqrt{p(w)}Z_{1} & 2 & \ldots & 0\\
\vdots & \vdots & \vdots & \ddots & \vdots\\
\sqrt{(m-1)p(w)}Z_{m-1} & \sqrt{(m-2)p(w)}Z_{m-2} & \sqrt{(m-3)p(w)}Z_{m-3} & \ldots & (m-1)\\
\sqrt{mp(w)}Z_{m} & \sqrt{(m-1)p(w)}Z_{m-1} & \sqrt{(m-2)p(w)}Z_{m-2} & \ldots & \sqrt{p(w)}Z_{1}
\end{array}\right).
\]
\end{cor}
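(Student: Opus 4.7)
The plan is to combine Newton's identities with the joint convergence in Corollary \ref{cor:The-random-variables} via the continuous mapping theorem. Recall that $\tr\bigwedge\nolimits^m A = e_m(\lambda_1,\ldots,\lambda_d)$, where $e_m$ is the $m$-th elementary symmetric polynomial and $\lambda_1,\ldots,\lambda_d$ are the eigenvalues of $A$. The classical Newton identity expresses $e_m$ as a polynomial in the power sums $p_k = \sum_i \lambda_i^k = \tr(A^k)$ via the bialternant determinant
\[
m!\cdot e_m \;=\; \det\begin{pmatrix}
p_1 & 1 & 0 & \cdots & 0\\
p_2 & p_1 & 2 & \cdots & 0\\
\vdots & \vdots & \vdots & \ddots & \vdots\\
p_{m-1} & p_{m-2} & p_{m-3} & \cdots & m-1\\
p_m & p_{m-1} & p_{m-2} & \cdots & p_1
\end{pmatrix}.
\]
Applying this with $A = w(X_1,\ldots,X_r)$ expresses $\tr\bigwedge\nolimits^m w$ as a fixed polynomial $P_m$ in the $m$ complex random variables $\tr(w),\tr(w^2),\ldots,\tr(w^m)$.

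Next I would invoke Corollary \ref{cor:The-random-variables}, which gives that the joint distribution of $\bigl(\tr(w),\tr(w^2),\ldots,\tr(w^m)\bigr)$ converges, as $d\to\infty$, to $\bigl(\sqrt{p(w)}Z_1,\sqrt{2p(w)}Z_2,\ldots,\sqrt{mp(w)}Z_m\bigr)$, where $Z_1,\ldots,Z_m$ are independent complex normal random variables. Strictly speaking, the cited corollary states only the marginal convergence of each $\tr(w^j)$, but joint convergence follows at once from Proposition \ref{prop:power sums for general words}: the latter computes all mixed moments in $\tr(w^1),\ldots,\tr(w^m)$ and $\tr(w^{-1}),\ldots,\tr(w^{-m})$ in the limit, and these moments match the joint moments of independent complex Gaussians (after the stated rescaling) in exactly the way the moment method requires.

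Since the polynomial $P_m$ obtained from Newton's identity is a continuous function of its arguments, and the joint distribution of its arguments converges in distribution to the Gaussian vector above, the continuous mapping theorem yields that $\tr\bigwedge\nolimits^m w$ converges in distribution to $P_m\bigl(\sqrt{p(w)}Z_1,\sqrt{2p(w)}Z_2,\ldots,\sqrt{mp(w)}Z_m\bigr)$, which is precisely the determinantal expression in the statement.

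The argument is mostly a direct transcription of the $w=x$ case in \cite[Proposition 4]{DG04}, with \cite{DS94} replaced by Corollary \ref{cor:The-random-variables}. The only mildly delicate point is the upgrade from marginal to joint convergence; I would handle it by quoting Proposition \ref{prop:power sums for general words} and noting that the mixed moments of the proposed Gaussian limit agree with the limiting mixed moments, so the moment method applies in the joint sense. Everything else is a routine application of Newton's identities and of continuous functional calculus on distributions.
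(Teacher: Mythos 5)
Your proof is correct and follows essentially the same route as the paper, which simply observes that the argument of Diaconis--Gamburd carries over verbatim once Corollary \ref{cor:The-random-variables} replaces the $w=x$ case; your expansion via Newton's determinantal identity, joint convergence from Proposition \ref{prop:power sums for general words}, and the continuous mapping theorem is exactly the implicit argument. The remark on joint versus marginal convergence is a sensible precaution, though the statement of Corollary \ref{cor:The-random-variables} is already intended as joint convergence of the vector.
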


\begin{example}
Let $m=3$. Then for every Borel set $A\subseteq\C$, 
\[
\underset{d\rightarrow\infty}{\lim}\mathbb{P}\left(\tr_{\U_{d}}\bigwedge\nolimits ^{3}w(X_{1},...,X_{r})\in A\right)=\mathbb{P}\left(f(Z_{1},Z_{2},Z_{3})\in A\right),
\]
where $Z_{1},Z_{2},Z_{3}$ are i.i.d normal variables, and
\[
f(Z_{1},Z_{2},Z_{3})=\frac{p(w)^{3/2}}{6}Z_{1}^{3}-\frac{p(w)}{\sqrt{2}}Z_{1}Z_{2}+\frac{p(w)^{1/2}}{\sqrt{3}}Z_{3}.
\]
\end{example}

\bibliographystyle{alpha}
\bibliography{bibfile}

\end{document}